\newtheorem{thm}{Theorem}[section]
\newtheorem{cor}[thm]{Corollary}
\newtheorem{lemma}[thm]{Lemma}
\newtheorem{prop}[thm]{Proposition}
\theoremstyle{definition}
\newtheorem{defn}[thm]{Definition}
\theoremstyle{remark}
\newtheorem{rem}[thm]{Remark}
\numberwithin{equation}{section}
\newtheorem{ex}[thm]{Example}
\newcommand{\h}{\mathcal{H}}
\newcommand{\B}{{\mathcal B}}
\newcommand{\K}{{\mathcal K}}
\newcommand{\g}{{\mathcal G}}
\newcommand{\N}{{\mathcal N}}
\newcommand{\R}{{\mathcal R}}
\newcommand{\T}{{\mathcal T}}
\newcommand{\V}{{\mathcal V}}
\newcommand{\ltn}{{\ell}^2(\mathbb N)}
\newcommand{\mn}{\mathbb N}
\def\ep{\hfill$\square$\par\bigskip}
\def\bqs{\begin{equation}}
\def\eqs{\tag*{$\square$}\end{equation}\par\bigskip}
\def\la{\langle}
\def\ra{\rangle}
\def\ftk{\{f_k\}_{k=1}^\infty}
\def\etk{\{e_k\}_{k=1}^\infty}
\def\suk{\sum_{k=1}^\infty}
\def\bop{\begin{op}\rm}
	\def\eop{\end{op}}
\def\bee{\begin{eqnarray}}
\def\ene{\end{eqnarray}}
\def\bes{\begin{eqnarray*}}
	\def\ens{\end{eqnarray*}}
\def\bei{\begin{itemize}}
	\def\eni{\end{itemize}}
\def\bpf{\begin{pf}}
	\def\epf{\end{pf}}
\def\bt{\begin{thm}}
	\def\et{\end{thm}}
\def\bc{\begin{cor}}
	\def\ec{\end{cor}}
\def\bpr{\begin{prop}}
	\def\epr{\end{prop}}
\def\bl{\begin{lemma}}
	\def\el{\end{lemma}}
\def\bd{\begin{defn}}
	\def\ed{\end{defn}}
\def\bex{\begin{ex}}
	\def\enx{\end{ex}}
\def\bfi{\begin{fig}}
	\def\efi{\end{fig}}
\begin{document}

\title[DYNAMICAL SAMPLING:  REPRESENTATIONS AND PERTURBATIONS]{ DYNAMICAL SAMPLING: MIXED FRAME OPERATORS, REPRESENTATIONS AND PERTURBATIONS
}%

\date{}

\author{Ehsan Rashidi, Abbas Najati and Elnaz Osgooei}

	\address{\noindent Ehsan Rashidi and Abbas Najati \newline \indent Department of Mathematics
\newline
\indent Faculty of Sciences
\newline
\indent   University of Mohaghegh Ardabili
\newline \indent  Ardabil, Iran
}
\email{erashidi@uma.ac.ir,\quad a.nejati@yahoo.com,~a.najati@uma.ac.ir}
	\address{\noindent Elnaz Osgooei \newline \indent Department of Science
\newline
\indent Urmia University of Technology
\newline \indent Urmia, Iran 
}
\email{e.osgooei@uut.ac.ir}

	\begin{abstract} 
		Motivated by recent progress in operator representation of frames, we investigate the frames of the form $ \{T^n \varphi\}_{n\in I}$ for  $ I=\mathbb{N}, \mathbb{Z} $, and answer questions about representations, perturbations and frames induced by  the action of powers of bounded linear operators. As a particular case, we discuss problems concerning representation of frames in terms of iterations of the mixed frame operators. As our another contribution, we consider  frames of the form $ \{a_n T^n \varphi\}_{n=0}^{\infty} $ for some non-zero scalars $ \{a_n\}_{n=0}^{\infty} $, and we obtain some new results in dynamical sampling. Finally, we will present some auxiliary results
related to the perturbation of sequences of the form $ \{T^n \varphi\}_{n=0}^{\infty}$. 
	\end{abstract}

 \subjclass[2010]{42C15, 47B40}
\keywords{Frames, Operator representation of frames, Dynamical sampling, Iterative actions of mixed frame operators, Riesz basis, Perturbation theory.}

\maketitle

		\section{Introduction}
	A frame in a separable Hilbert space $\h$ is a countable collection of elements in $\h$ that allows each $ f\in\h$ to be written  as an (infinite) linear combination of the frame elements, but linear independence
between the frame elements is not required. Duffin and Schaeffer \cite{A88} introduced frames, and they used frames as a tool in the study sequences of the form $\{e^{i\lambda_nx}\}_{n\in\Bbb{Z}}$, where
 $\{\lambda_n\}_{n\in\Bbb{Z}}$ is a family of real or complex numbers. 
 Dynamical sampling has already introduced in \cite{A1} by Aldroubi
et al.,  and it deals with frame properties of sequences of the form $\{T^n\varphi\}_{n=0}^\infty$, where $\varphi\in\h$ and
$T :\h\to\h$ belongs to certain classes of linear operators.

 \par
 Throughout this paper,  let $\Bbb{N}_0=\{0,1,2,\cdots\}.$ We let $\h$ denote a complex separable
infinite-dimensional Hilbert space. Given a Hilbert space $\h$, we let $B(\h)$
denote the set of all bounded linear operators $T:\h\to\h$. Moreover, $GL(\h)$ will
denote the set of all bijective operators in $B(\h)$. 
	\begin{defn}
		Let $ I$ denote a countable set and let $ \{f_{k}\}_{k\in I} $ be a sequence in $ \h $.
		\bei
		\item$ \{f_{k}\}_{k\in I} $ is called a frame
		for $ \h $ if there exist constants $ A, B > 0 $ such that $ A\| f\|^{2}\leq\sum_{k\in I} | \langle f, f_{k}\rangle |^{2}\leq B\| f\|^{2}$  for all $f\in \h $; it is a frame sequence if the stated inequalities hold for all
		$ f \in \overline{\rm span}\{f_k\}_{k\in I} $ .
		\item  $ \{f_k\}_{k\in I} $ is called a Bessel sequence with Bessel bound $B$,  if $\sum_{k\in I} | \langle f, f_{k}\rangle |^{2}\leq B\| f\|^{2}$ for all $f\in \h $;
		\item  $ \{f_k\}_{k\in I} $ is called a Riesz sequence if there
		exist constants $ A, B > 0 $ such that $ A\sum_{k\in I} | c_k |^{2}\leq\| \sum_{k\in I} c_k f_k\|^{2}\leq B\sum_{k\in I} | c_k |^{2} $  for all finite scalar sequences $ \{c_k\}_{k\in I} $.
		\item   $ \{f_k\}_{k\in I} $ is called a Riesz basis for  $ \h $, if it is a Riesz sequence  for
		which $ \overline{\rm span}\{f_k\}_{k\in I}= \h $.
		\eni
	\end{defn}
	The following theorem was proved in \cite{A4} which is about frames and operators:
	\begin{thm}\label{TM1}
		Consider a sequence $ \{f_k\}_{k=1}^{\infty} $ in a separable Hilbert space $ \h $. Then the following hold:
		\bei
		\item  $ \{f_k\}_{k=1}^{\infty} $ is a Bessel sequence if and only if $ U:\{c_k\}_{k=1}^{\infty}\mapsto \sum_{k=1}^{\infty}c_k f_k $ is a well-defined mapping from $ \ell^{2}(\mathbb{N}) $ to $ \h $, i.e, the infinite series is convergent for all $ \{c_k\}_{k=1}^{\infty}\in\ell^{2}(\mathbb{N}) $; in the affirmative case the operator $ U $ is linear and bounded.\\
		\item  $ \{f_k\}_{k=1}^{\infty} $ is a frame if and only if the mapping $ \{c_k\}_{k=1}^{\infty}\mapsto \sum_{k=1}^{\infty}c_k f_k $ is well-defined from $ \ell^{2}(\mathbb{N}) $ to $ \h $ and surjective.\\
		\item  $ \{f_k\}_{k=1}^{\infty} $ is a Riesz basis if and only if the mapping $ \{c_k\}_{k=1}^{\infty}\mapsto \sum_{k=1}^{\infty}c_k f_k $ is well-defined from $ \ell^{2}(\mathbb{N}) $ to $ \h $ and bijective.
		\eni
	\end{thm}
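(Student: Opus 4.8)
The plan is to prove the three statements by translating each property of the sequence $\{f_k\}_{k=1}^\infty$ into a corresponding property of the candidate synthesis operator $U$ and its adjoint. Throughout, write $U_N:\ell^2(\mathbb N)\to\h$ for the finite-rank operator $U_N\{c_k\}=\sum_{k=1}^N c_k f_k$, which is always well-defined and bounded, so the issue is only convergence of the tail and boundedness of the limit operator.

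For $(i)$, the forward implication uses the Bessel bound directly. For $m<n$ and $g\in\h$ with $\|g\|\le1$, Cauchy--Schwarz gives
$$\Bigl|\Bigl\langle\sum_{k=m}^n c_k f_k,\,g\Bigr\rangle\Bigr|\le\Bigl(\sum_{k=m}^n|c_k|^2\Bigr)^{1/2}\Bigl(\sum_{k\in\mathbb N}|\langle g,f_k\rangle|^2\Bigr)^{1/2}\le\sqrt B\,\Bigl(\sum_{k=m}^n|c_k|^2\Bigr)^{1/2},$$
so taking the supremum over such $g$ shows the partial sums of $\sum_k c_kf_k$ form a Cauchy sequence; hence $U$ is well-defined, and the same estimate yields $\|Uc\|\le\sqrt B\,\|c\|$. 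For the converse, if $U$ is merely well-defined on all of $\ell^2(\mathbb N)$, then $U_Nc\to Uc$ for every $c$, so $\sup_N\|U_Nc\|<\infty$ pointwise; the uniform boundedness principle then gives $\sup_N\|U_N\|<\infty$, whence $U$ is bounded. A short computation identifies the adjoint, $U^*g=\{\langle g,f_k\rangle\}_{k}$, so $\sum_k|\langle g,f_k\rangle|^2=\|U^*g\|^2\le\|U\|^2\|g\|^2$ is exactly the Bessel inequality.

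For $(ii)$ and $(iii)$ I would lean on two standard operator facts: a bounded operator between Hilbert spaces is surjective if and only if its adjoint is bounded below, and a bounded bijection has bounded inverse. If $\{f_k\}$ is a frame it is Bessel, so by $(i)$ the operator $U$ is bounded, and the frame operator $S=UU^*$ is invertible, forcing $U$ surjective; conversely, if $U$ is well-defined and surjective, then $(i)$ gives the upper bound and surjectivity supplies a constant $A>0$ with $\|U^*g\|^2\ge A\|g\|^2$, which reads $\sum_k|\langle g,f_k\rangle|^2\ge A\|g\|^2$, the lower frame bound. For $(iii)$, if $\{f_k\}$ is a Riesz basis the Riesz inequalities hold for finite scalar sequences and extend to all of $\ell^2(\mathbb N)$ once convergence is known (from the upper bound), giving $A\|c\|^2\le\|Uc\|^2\le B\|c\|^2$; in particular $U$ is bounded below, so its range is closed, and since that range contains $\mathrm{span}\{f_k\}$ whose closure is $\h$, the operator $U$ is also surjective, hence bijective. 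Conversely, $U$ well-defined and bijective is bounded with bounded inverse, so $\|U^{-1}\|^{-2}\|c\|^2\le\|Uc\|^2\le\|U\|^2\|c\|^2$ for every $c\in\ell^2(\mathbb N)$; restricting to finite sequences gives the Riesz-sequence inequalities, and $U(\mathrm{span}\{e_k\})=\mathrm{span}\{f_k\}$ being dense (as $U$ is surjective and continuous with dense domain $\mathrm{span}\{e_k\}$) yields $\overline{\mathrm{span}}\{f_k\}=\h$, completing the Riesz-basis characterization.

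The only genuinely non-elementary ingredient is the boundedness of $U$ obtained from mere well-definedness, which forces the use of the uniform boundedness principle (or, equivalently, the closed graph theorem); everything else is bookkeeping with the adjoint identity $U^*g=\{\langle g,f_k\rangle\}$ and the two Banach-space theorems quoted above. I expect the step requiring the most care to be the passage from the Riesz inequalities for finite scalar sequences to the corresponding norm estimates for $U$ on all of $\ell^2(\mathbb N)$: one must first invoke the upper (Bessel) bound to guarantee that the series defining $Uc$ converges before the lower estimate can be pushed to the limit.
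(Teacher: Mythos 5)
Your proof is correct, and it is essentially the standard argument: Banach--Steinhaus to get boundedness of $U$ from mere well-definedness, the adjoint identity $U^*g=\{\langle g,f_k\rangle\}_{k=1}^\infty$ to translate Bessel/frame bounds into norm estimates on $U^*$, and the surjectivity--bounded-below duality plus the inverse mapping theorem for parts $(ii)$ and $(iii)$. Note that the paper does not prove Theorem \ref{TM1} at all but quotes it from \cite{A4}, where the proof given (Theorems on pre-frame operators and Riesz bases in Christensen's book) follows the same route you took, so there is nothing to reconcile.
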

	For $ I=\mathbb{N} $ or $ \mathbb{Z} $, \textcolor{cyan}{Theorem $ \ref{TM1}$} tells us that if $ \{f_{k}\}_{k\in I} $ is a Bessel sequence, the synthesis operator 
	\begin{equation*}
	U : \ell^{2}(I) \rightarrow \h,\quad U\{c_k\}_{k\in I} :=\sum_{k\in I} c_k f_k,
	\end{equation*}
	is well-defined and bounded. A central role will be
	played by the kernel of the operator $U$, i.e., the subset of $ \ell^{2}(I) $ given by
	\begin{equation*}
	{\N}_U = \big\{ \{c_k\}_{k\in I}\in \ell^{2}(I):~\sum_{k\in I}c_k f_k =0\big\}. 
	\end{equation*}

	The \textit{excess} of a frame is the number of elements that can be removed in order for the remaining set
to form a basis. Given a Bessel sequence $ \{f_{k}\}_{k=1}^{\infty} $, the \textit{frame operator} $ S : \h \rightarrow \h $ is defined
	by
	\begin{equation*}
	S := UU^{\ast},\quad Sf:=UU^{\ast}f=\sum_{k=1}^{\infty}\langle f, f_k\rangle f_k.
	\end{equation*} 
	\subsection{Motivation and idea of dynamical sampling}

Dynamical sampling is a recent research was introduced earlier in \cite{A1} deals with frame properties of the sequence $ \{T^n \varphi\}_{n=0}^{\infty}$ for some $ T\in(\h) $ and some $\varphi\in\h$. We will consider frames $ \{f_{k}\}_{k\in I} $ with indexing over $ I=\mathbb{N} $ or $ I=\mathbb{Z} $.  It is natural to ask whether we can find
a linear operator $ T $ such that $ f_{k+1}=Tf_k $ for all $ k\in I $. Various characterizations of frames having the form
$ \{f_{k}\}_{k\in I}=\{T^k\varphi\}_{k\in I},$ 
 where $T$ is a linear (not necessarily bounded) operator can be found in 
\cite{A7, A6, A13}. 
	We are interested in the structure of the set of iterations of the operator $T\in B(\h)$ when acting on the vector $\varphi\in\h.$  Indeed, we are interested in the following two questions:
	
	\bei 
	\item Under what conditions on $T$ and $I$ is the the iterated system of vectors $\{T^n \varphi\}_{n\in I} $ a frame or a Riesz basis for $\h$?
	\item If $\{T^n \varphi\}_{n\in I} $ is  a frame or a Riesz basis for $\h$, what can be deduced about the operator $T$? 
	\eni

	\begin{ex}
	Let $ \{e_k\}_{k=1}^{\infty} $ denote an orthonormal basis for $ \h $. Define the operator $T:\h\to\h$ by $T(f)=\sum_{k=1}^\infty \langle f, e_k\rangle e_{k+1}.$ It is clear that
	 $ \{e_k\}_{k=1}^{\infty} =\{T^k e_1\}_{k=0}^{\infty}. $
\end{ex}
\begin{ex}
	Assume that $\etk$ is an orthonormal basis for $\h,$ and
	define the bounded operator $T: \h \to \h$ by $T(f) = \sum_{k=1}^\infty  \langle f, e_k\rangle 2^{-k} e_{k+1}.$ 
	In particular, $T$ is compact, being the norm-limit of the finite-rank operators
	\begin{equation*}
	T_N: \h \to \h,\quad T_N(f) = \sum_{k=1}^N \langle f, e_k\rangle 2^{-k} e_{k+1}.
	\end{equation*}
	On the other hand, by construction the sequence $\Big\{\frac{T^{k} e_1}{\| T^{k} e_1\|}\Big\}_{k=0}^{\infty}$
	is $\etk.$
\end{ex}   
\begin{defn}\label{MixO}
	Suppose that $ \{f_k\}_{k=1}^{\infty} $ and $ \{g_k\}_{k=1}^{\infty} $ are two frames (or Bessel sequences) for $ \h $. The operator $ T:\h\to\h $ defined by $Tf=\sum_{k=1}^{\infty}\langle f, g_{k}\rangle f_k$  is called \textit{the mixed frame operator} associated with $ \{f_k\}_{k=1}^{\infty} $ and $ \{g_k\}_{k=1}^{\infty} $.
\end{defn}
Obviously, any bounded linear operator $T: \h \to \h$ is indeed a mixed frame operator. Because, if $T\in B(\h)$ and $\etk$ is an orthonormal
	basis for $\h$, then by applying $T$ on the decomposition $f=\sum_{k=1}^\infty \la f, e_k\ra e_k$, we have that
	$Tf= \sum_{k=1}^\infty \la f, e_k\ra Te_k$ for all $f\in \h.$ 
	Hence, $T$ is the mixed frame operator for the Bessel sequences $\etk$ and
	$\{Te_k\}_{k=1}^\infty$.
\par
 The following  example of a mixed frame operator was already in \cite{A13}:
\begin{ex}
	Suppose that $ \{f_k\}_{k=1}^{\infty}= \{T^n f_1\}_{n=0}^{\infty} $ is a frame for $\h$ for some  $ T\in B(\h)$. Let
 $\{g_k\}_{k=1}^{\infty}$ be a dual frame of  $ \{f_k\}_{k=1}^{\infty}$. Then
	$Tf=\sum_{k=1}^{\infty}\langle f, g_k\rangle Tf_k=\sum_{k=1}^{\infty}\langle f, g_k\rangle f_{k+1},$
	for every $ f\in\h $. Therefore,  $ T $ is  a mixed frame operator.
\end{ex}
\par
Let $ \{f_k\}_{k=1}^{\infty}$ be a Bessel sequence and $\etk$ be an orthonormal basis for $\h$. Define the operator $T:\h \to\h$ by 
$Tf=\sum_{k=1}^{\infty}\langle f, e_k\rangle f_k$. It is clear that $T$ is bounded and $Te_k=f_k$ for all $k$. Therefore we have the following: 
\begin{prop}
 The Bessel sequences in $\h$ are precisely the sequences $\{Te_k\}_{k=1}^\infty,$ where $T\in B(\h)$ and $\etk$ is an orthonormal basis for $\h$. 
\end{prop}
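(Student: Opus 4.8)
The plan is to establish the two inclusions separately; the forward implication has essentially been recorded in the paragraph preceding the statement, so the substance lies in the converse.

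First I would check that any sequence $\{Te_k\}_{k=1}^\infty$ with $T\in B(\h)$ and $\{e_k\}_{k=1}^\infty$ an orthonormal basis is Bessel. For a fixed $f\in\h$, applying Parseval's identity to the vector $T^{*}f$ gives
\[
\sum_{k=1}^\infty |\langle f, Te_k\rangle|^2=\sum_{k=1}^\infty |\langle T^{*}f, e_k\rangle|^2=\|T^{*}f\|^2\le \|T\|^2\|f\|^2,
\]
so $\{Te_k\}_{k=1}^\infty$ is Bessel with bound $\|T\|^2$.

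For the converse, let $\{f_k\}_{k=1}^\infty$ be a Bessel sequence with bound $B$. Since $\h$ is separable, I would fix an orthonormal basis $\{e_k\}_{k=1}^\infty$ of $\h$ and define $T\colon\h\to\h$ by $Tf=\sum_{k=1}^\infty\langle f,e_k\rangle f_k$. The Bessel hypothesis is exactly what makes this definition legitimate: $\{\langle f,e_k\rangle\}_{k=1}^\infty\in\ell^2(\mathbb{N})$, so by Theorem~\ref{TM1}(i) the series converges, the map $T$ is linear, and it is bounded with $\|Tf\|\le\sqrt{B}\,\|f\|$. Evaluating on basis vectors yields $Te_k=f_k$ for every $k$, which proves the claim. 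One may phrase the same construction as $T=U\Phi$, where $U\colon\ell^2(\mathbb{N})\to\h$ is the synthesis operator of $\{f_k\}_{k=1}^\infty$ from Theorem~\ref{TM1} and $\Phi\colon\h\to\ell^2(\mathbb{N})$ is the unitary carrying $e_k$ to the $k$-th canonical basis vector.

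There is no real obstacle here: the argument is routine. The only points worth emphasizing are that separability of $\h$ is what supplies the orthonormal basis $\{e_k\}_{k=1}^\infty$, and that it is precisely the Bessel property --- not the stronger frame property --- that guarantees convergence of the series defining $T$ and boundedness of the resulting operator.
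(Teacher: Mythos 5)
Your proof is correct and follows essentially the same route as the paper: for the converse you define $Tf=\sum_{k=1}^\infty\langle f,e_k\rangle f_k$, invoke the Bessel property (via Theorem~\ref{TM1}) for convergence and boundedness, and observe $Te_k=f_k$, which is exactly the paper's construction. Your explicit Parseval argument for the forward inclusion and the factorization $T=U\Phi$ are fine elaborations of what the paper leaves implicit, but they do not change the approach.
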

	\subsection{Recent results on dynamical sampling and frames}

Various aspect of the dynamical sampling problem and related frame theory have been studied by Aldroubi et al. and Christensen et al. in \cite{A1, A2, A3, A13, A113, A7, A6, A116, A117}. They deal with frame properties of sequences in a Hilbert space $ \h $ of the form $ \{T^n \varphi\}_{n=0}^{\infty} $, where $ \varphi\in\h $ and $ T\in B(\h) $. However, some no-go results in dynamical sampling have been proved; for example, if $ T $ is a normal operator, then $ \{T^n \varphi\}_{n=0}^{\infty} $ cannot be a basis \cite{A2}. Moreover, if $ T $ is a unitary operator or a compact operator,  then $ \{T^n \varphi\}_{n=0}^{\infty} $ cannot be a frame \cite{A3, A13}.
The following recent results in dynamical sampling and frame representations with bounded operators can be found in  \cite{A13, A7, A6, A117}. 
Suppose that $ \{f_k\}_{k=1}^{\infty} $ is a frame for $ \h $: 
\bei  
\item[$(i)$]  $ \{f_k\}_{k=1}^{\infty}$ has a representation$ \{f_k\}_{k=1}^{\infty}=\{T^k f_1\}_{k=0}^{\infty} $ for some bounded operator $T: \h \to\h$ if and only if $ \{f_k\}_{k=1}^{\infty} $ is linearly independent.
\item [$(ii)$]  Let $T: {\rm span}\{f_{k}\}_{k=0}^{\infty}\to {\rm span}\{f_{k}\}_{k=0}^{\infty}$ be a linear operator and $ \{f_k\}_{k=1}^{\infty}=\{T^k f_1\}_{k=0}^{\infty} $. Then $T$ is bounded if and only if the kernel $ \N_{U} $ of the synthesis operator is invariant under right-shifts; in particular $ T $ is bounded if $ \{f_k\}_{k=1}^{\infty}=\{T^k f_1\}_{k=0}^{\infty} $ is a Riesz basis.
\item [$(iii)$]  Assume that $ \{f_k\}_{k=1}^{\infty} $ is linearly independent and overcomplete. Then $ \{f_k\}_{k=1}^{\infty} $ has infinite excess.
\eni 
\par
For countable subsets $ \g\subset \h $ and a normal operator $ T $, Aldroubi et al. \cite{A2} proved that the iterative system $ \{T^n \varphi\}_{\varphi\in \g, n\geq 0} $ can be a frame for $ \h $, but cannot be a basis. However, it is difficult for a system of vectors of the form $ \{T^n \varphi\}_{\varphi\in \g, n\geq 0} $
to be a frame. The difficulty is that the  the spectrum of $ T $ must be very special. Such frames however do exist, as shown by
the constructions in \cite{A1}.

\par
The paper is organized as follows. In section $ 2 $, we provide an alternative proof to show that $\bigcup_{j=1}^{k}\{T^n \varphi_j\}_{n=0}^{\infty} $ cannot form a frame for $ \h $, whenever $ T $ is compact. Moreover, we provide necessary and sufficient conditions for $T$ being surjective. The main purpose of this section is to characterize and compare the Bessel and frame properties of orbits $ \{T^n \varphi\}_{n=0}^{\infty} $ with a bounded operator $ T $ in connection with frame operators and mixed frame operators.  We also show that the iterative actions of the mixed frame operator associated with two orthonormal basis cannot form a frame. 
Section $ 3 $ discusses representations of frames which can be represented of the form $ \{a_n T^n \varphi\}_{n=0}^{\infty} $ for some non-zero scalars $ \{a_n\}_{n=0}^{\infty} $ with $ \sup_n \big|\dfrac{a_n}{a_{n+1}}\big|<\infty$. 
Finally, in section 4 we illustrate some auxiliary results related to the perturbation of an operator to construct frame orbits in terms of the operator representations.

\section{Iterative actions of frame operator and mixed frame operator}

The representation of frames in the form $ \{T^n \varphi\}_{n=0}^{\infty} $ and $ \{T^n \varphi\}_{n\in\mathbb{Z}} $ for some $ \varphi\in\h $ and some  $T\in B(\h) $ was already studied in \cite{A13, A7}. Aldroubi et al.  \cite{A1} showed that iterative actions of compact self-adjoint operators cannot form a frame. However, for a normal operator, Philipp  \cite{A11} proved that $ \{T^n \varphi\}_{n\in \mathbb{N}} $ can be a Bessel sequence.
It is clear that the iterative system $ \{ T^{n} \varphi\}_{n=0}^{\infty} $is a Bessel sequence if $ \|T\|<1 $. Indeed, for any $ f\in \h,$ we have
\begin{equation*}
\begin{aligned}
\sum_{n=0}^{\infty}\vert \langle f, T^{n}\varphi \rangle\vert^{2}\leq\sum_{n=0}^{\infty}\|f\|^{2}\| T^{n}\varphi\|^{2}\leq\|f\|^{2}\|\varphi\|^{2}\sum_{n=0}^{\infty}\| T\|^{2n}
= \dfrac{\|\varphi\|^{2}}{1-\|T\|^{2}}\|f\|^{2}.
\end{aligned}
\end{equation*}
\par
It has already proved that if $ T $ is a compact operator on an infinite-dimensional Hilbert space $ \h $ and $ \varphi_1, ..., \varphi_k\in\h $, then $ \bigcup_{j=1}^{k}\{T^n \varphi_j\}_{n=0}^{\infty} $ cannot be a frame for $ \h $ \cite{A13}. Here we provide an alternative simple proof. We first prove a lemma.
\begin{lemma}\label{lemRT}
Let $T\in B(\h)$ and $ \varphi_1, ..., \varphi_k\in\h $.  If $ \bigcup_{j=1}^{k}\{T^n \varphi_j\}_{n=0}^{\infty} $ is a frame for $\h$, then $T$ has closed rang and the range of $T$ is $\R_T=\overline{\rm span}\{T^n \varphi_j:  j=1,2,\cdots,k \}_{n=1}^{\infty}.$ 
\end{lemma}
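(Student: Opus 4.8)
The plan is to exploit the fact that a frame for $\h$ has surjective synthesis operator (Theorem~\ref{TM1}(ii)), so $\h = \overline{\rm span}\,\bigcup_{j=1}^k\{T^n\varphi_j\}_{n=0}^\infty$, and then separate out the ``zeroth'' terms $\varphi_1,\dots,\varphi_k$ from the ``tail'' terms $\{T^n\varphi_j\}_{n\geq 1}$, which all lie in $\R_T$. Let $\V := \overline{\rm span}\{T^n\varphi_j : j=1,\dots,k\}_{n\geq 1}$. First I would observe that every tail vector $T^n\varphi_j$ with $n\geq 1$ equals $T(T^{n-1}\varphi_j) \in \R_T$, so $\V \subseteq \overline{\R_T}$; conversely, since $T$ maps $\overline{\rm span}\bigcup_j\{T^n\varphi_j\}_{n\geq 0}=\h$ into $\V$ (because $T(T^n\varphi_j) = T^{n+1}\varphi_j$ is a tail vector), we get $\R_T \subseteq \V$, hence $\overline{\R_T}=\V$. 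So the content to prove is really: (a) $\R_T$ is closed, i.e. $\R_T = \V$; and (b) $\V$ has finite codimension in $\h$.

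For (b): since the full system is a frame for $\h$ and the tail system spans $\V$, the quotient $\h/\V$ is spanned by the images of the finitely many vectors $\varphi_1,\dots,\varphi_k$, hence $\dim(\h/\V)\leq k < \infty$. Thus $\V$ is a closed subspace of finite codimension, and in particular $\overline{\R_T}=\V$ is closed with $\dim(\h/\overline{\R_T})\leq k$.

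For (a), the closedness of $\R_T$ itself: I would use the standard fact that a bounded operator whose range has finite codimension has closed range. One clean way: write $\h = \V \oplus W$ with $\dim W = m \leq k$, and pick $w_1,\dots,w_m$ spanning $W$. Then the operator $\widetilde T : \h \oplus \mathbb{C}^m \to \h$, $\widetilde T(f,c_1,\dots,c_m) = Tf + \sum_i c_i w_i$, is bounded and \emph{surjective} onto $\h = \V + W$ (using $\R_T$ dense in $\V$ together with $W$ finite-dimensional — actually one checks $\R_{\widetilde T} \supseteq \V$ since $\V = \overline{\R_T}$ and adding the finite-dimensional $W$ to a dense subspace of the finite-codimensional $\V$ recovers all of $\h$; the cleanest route is the open mapping theorem applied after noting $\R_{\widetilde T}$ is dense and of finite codimension, forcing it to be everything). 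A surjective bounded operator has closed range, so $\R_{\widetilde T} = \h$; then $\R_T = \widetilde T(\h \oplus \{0\})$ is closed as the image under a continuous map with closed graph of a closed complemented subspace — more simply, $\R_T$ is closed because it has finite codimension $m$ in $\h$ and is the range of a bounded operator: restrict the surjection $\widetilde T$ and quotient by $W$. Concretely: $\pi : \h \to \h/W$ bounded surjective, $\pi\circ T : \h \to \h/W$ has dense range equal to $\pi(\V) = \h/W$ (since $\V + W = \h$), and being a bounded operator with dense finite-... — here I would instead just invoke: the composition $\h \xrightarrow{T} \h \xrightarrow{\pi} \h/W$ is surjective, hence by the open mapping theorem $\R_T + W = \h$ with $\R_T \cap W = \{0\}$ up to the decomposition, giving $\R_T$ closed as a topological complement of the finite-dimensional $W$.

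The main obstacle is packaging step (a) cleanly: the genuinely nontrivial input is the functional-analytic lemma that a bounded operator with range of finite codimension has closed range (equivalently, that a dense subspace of finite codimension in a Banach space is closed — this fails without boundedness of $T$ to furnish the open mapping theorem, so the hypothesis $T\in B(\h)$ is essential). Everything else — identifying $\overline{\R_T}$ with $\V$ via the shift-like action $T(T^n\varphi_j)=T^{n+1}\varphi_j$, and bounding the codimension by $k$ from the frame property — is a short direct verification. I would state the finite-codimension lemma as a one-line citation or a two-sentence argument (dense $+$ finite codimension $+$ range of a bounded map $\Rightarrow$ closed, via open mapping theorem on $T$ augmented by a finite-rank piece), and keep the rest as above.
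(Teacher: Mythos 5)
Your first half is fine and coincides with the paper's: the inclusion $\R_T\subseteq\V:=\overline{\rm span}\{T^n\varphi_j: j=1,\dots,k\}_{n\geq 1}$ follows by applying $T$ to expansions coming from the frame, and your observation that $\dim(\h/\V)\leq k$ is correct. The gap is in your step (a). All you have established is that $\overline{\R_T}=\V$, i.e.\ that the \emph{closure} of the range has finite codimension; the fact you want to invoke (``a bounded operator whose range has finite codimension has closed range'') requires the range itself to have finite \emph{algebraic} codimension, which you never prove. The two are genuinely different: a range can be dense in a closed finite-codimensional subspace without being closed (any injective compact operator with dense range has $\overline{\R_T}=\h$, codimension $0$, yet $\R_T\neq\h$). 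For the same reason the auxiliary claims in your sketch fail: $\R_T$ dense in $\V$ together with a finite-dimensional complement $W$ only gives $\R_T+W$ \emph{dense} in $\h$, not equal to $\h$, so neither $\widetilde T$ nor $\pi\circ T$ is shown to be surjective and the open mapping theorem has nothing to act on. In effect your argument uses only completeness of the system plus the shift property $T(T^n\varphi_j)=T^{n+1}\varphi_j$, and these soft facts cannot force closedness of the range --- the frame hypothesis must enter again.

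What is missing is precisely the second half of the paper's proof: after deleting the finitely many vectors $\varphi_1,\dots,\varphi_k$, the tail $\bigcup_{j=1}^{k}\{T^n\varphi_j\}_{n=1}^{\infty}$ is still a frame for its closed span $\V$ (a frame with finitely many elements removed is a frame for the closed span of what remains), so its synthesis operator is \emph{onto} $\V$: every $x\in\V$ can be written $x=\sum_{j=1}^{k}\sum_{n\geq 1}c_{n,j}T^n\varphi_j$ with $\ell^2$ coefficients. Since the full system is Bessel, the series $y:=\sum_{j=1}^{k}\sum_{n\geq 1}c_{n,j}T^{n-1}\varphi_j$ converges in $\h$, and boundedness of $T$ gives $Ty=x$. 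This produces explicit preimages, so $\V\subseteq\R_T$ and hence $\R_T=\V$ is closed. Your codimension bound is a nice remark but is not needed once this surjectivity argument is in place; without it, your proof does not close.
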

\begin{proof}
For each $ x\in\h $ there exists a sequence $ \{c_{n,j}:  j=1,2,\cdots,k \}_{n=0}^{\infty} $ of scalars such that $x=\sum_{j=1}^{k}\sum_{n=0}^{\infty}c_{n,j}  T^n \varphi_j $. Therefore
	\begin{equation*}
	Tx=\sum_{j=1}^{k}\sum_{n=0}^{\infty}c_{n,j}  T^{n+1} \varphi_j\in\overline{\rm span}\{T^n \varphi_j:  j=1,2,\cdots,k \}_{n=1}^{\infty}.
	\end{equation*}
	Therefore $ \R_T \subseteq {\K}:=\overline{\rm span}\{T^n \varphi_j:  j=1,2,\cdots,k \}_{n=1}^{\infty}$. On the other hand, since $ \bigcup_{j=1}^{k}\{T^n \varphi_j\}_{n=1}^{\infty} $ is a frame for $\K $,  for each $ x\in K $ 
 there is a sequence $ \{c_{n,j}:  j=1,2,\cdots,k \}_{n=1}^{\infty} $ of scalars such that $ x=\sum_{j=1}^{k}\sum_{n=1}^{\infty}c_{n,j}  T^n \varphi_j =T\big(\sum_{j=1}^{k}\sum_{n=0}^{\infty}c_{n,j}  T^n \varphi_j\big) \in \R_T$. Therefore  $\R_T=\overline{\rm span}\{T^n \varphi_j:  j=1,2,\cdots,k \}_{n=1}^{\infty}$, i.e., $ T $ has closed range.
\end{proof}
\begin{prop}
 Suppose that $ \dim \h=\infty $, $ \varphi_1,\cdots,\varphi_k\in\h $ and $ T:\h\rightarrow\h $ is a compact operator. Then $\bigcup_{j=1}^{k}\{T^n \varphi_j\}_{n=0}^{\infty} $ cannot form a frame for $ \h $.
\end{prop}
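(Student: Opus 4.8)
The plan is to argue by contradiction, combining \textcolor{cyan}{Lemma \ref{lemRT}} with the standard fact that a compact operator with closed range has finite-dimensional range.

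First I would suppose that $\bigcup_{j=1}^{k}\{T^n\varphi_j\}_{n=0}^{\infty}$ is a frame for $\h$. Then \textcolor{cyan}{Lemma \ref{lemRT}} applies and gives that $T$ has closed range with $\R_T=\span\{T^n\varphi_j:j=1,\dots,k\}_{n=1}^{\infty}$. Next I would exploit that a frame is complete: $\h=\span\{T^n\varphi_j:j=1,\dots,k\}_{n=0}^{\infty}$, and the only generators here that are absent from the spanning set of $\R_T$ are the finitely many vectors $\varphi_1,\dots,\varphi_k$. Hence $\h\subseteq\R_T+\Span\{\varphi_1,\dots,\varphi_k\}$, so the quotient $\h/\R_T$ is spanned by the cosets of $\varphi_1,\dots,\varphi_k$ and therefore $\dim(\h/\R_T)\le k<\infty$; since $\dim\h=\infty$, this forces $\dim\R_T=\infty$.

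To close the argument I would invoke the other half of the contradiction: $T$ is compact with closed range, so regarding $T$ as a surjection $T:\h\to\R_T$ onto the Hilbert space $\R_T$, the open mapping theorem makes $T$ open, whence $T$ carries the unit ball of $\h$ onto a set containing a ball of $\R_T$ about the origin; compactness of $T$ forces the closure of that set to be compact in $\R_T$, and Riesz's lemma then gives $\dim\R_T<\infty$. This contradicts the previous step, so no such frame can exist.

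The routine parts (completeness of a frame, the quotient-dimension bookkeeping) are immediate; the one step I expect to require the most care is the justification that compactness together with closed range implies finite rank — I would either record the short open-mapping/Riesz-lemma proof sketched above or cite it as a known fact. I would also point out explicitly that both hypotheses, $\dim\h=\infty$ and the finiteness of $\{\varphi_j\}_{j=1}^{k}$, are used precisely at the point where finite codimension of $\R_T$ is turned into the contradiction.
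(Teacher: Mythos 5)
Your proposal is correct and follows essentially the same route as the paper: argue by contradiction, invoke Lemma \ref{lemRT} to get closed range with $\R_T=\overline{\rm span}\{T^n\varphi_j: j=1,\dots,k\}_{n=1}^{\infty}$, and conclude via the fact that a compact operator with closed range has finite-dimensional range, which is incompatible with $\dim\h=\infty$. The only differences are cosmetic: the paper justifies the finite-rank step by noting that the bounded pseudo-inverse gives $TT^{\dagger}=I_{\R_T}$ compact, whereas you use the open mapping theorem and Riesz's lemma, and you spell out the finite-codimension bookkeeping ($\dim(\h/\R_T)\le k$) that the paper leaves implicit.
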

\begin{proof}
 Let $\bigcup_{j=1}^{k}\{T^n \varphi_j\}_{n=0}^{\infty} $ be a frame for $ \h$. Then $T$ has closed rang and $\R_T=\overline{\rm span}\{T^n \varphi_j:  j=1,2,\cdots,k \}_{n=1}^{\infty}$  by  \textcolor{cyan}{Lemma \ref{lemRT}}.  We denote by $T^{\dagger}\in B(\h)$  the pseudo-inverse of $T$, i.e.,  
\[T^{\dagger}: \h\to \h,\quad TT^{\dagger}x=x,\quad x\in \R_T.\]
Since $ T $ is compact, $ TT^{\dagger}=I_{\R_T} $ is compact. This implies that    $\R_T $ is finite-dimensional, and it leads to conclude $ \dim \h< \infty $, which is a contradiction. Therefore $\bigcup_{j=1}^{k}\{T^n \varphi_j\}_{n=0}^{\infty} $ cannot be a frame for $ \h $.
 \end{proof}
As we saw in  \textcolor{cyan}{Lemma \ref{lemRT}}, $\R_T$ is closed if $\{T^n\varphi\}_{n=0}^\infty$ is a frame. The following proposition provides  necessary and sufficient conditions for $T$ being surjective.
\begin{prop}
Let $T\in B(\h)$ and $\varphi\in\h$. Assume that $\{T^n\varphi\}_{n=0}^\infty$ is a frame for $\h$ with frame operator  $S$. Then the following hold:
\bei
\item[$(i)$] $T$ is surjective if and only if there exists $n\geq1$ such that $\langle T^n\varphi, S^{-1}\varphi\rangle\neq0.$
\item[$(ii)$] $T$ is surjective if and only if $\varphi\in \R_T.$
\item[$(iii)$] $T$ is surjective if and only if $S^{-1}\varphi\notin\ker{T^{\ast}}.$
\item[$(iv)$] $T$ is surjective if and only if $\|S^{-{1/2}}\varphi\|\neq1.$
\eni
\end{prop}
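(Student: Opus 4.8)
The plan is to reduce everything to a single observation: since $\{T^n\varphi\}_{n=0}^\infty$ is a frame for $\h$ and $\varphi = T^0\varphi$ is one of its elements, the operator $T$ is surjective precisely when $\varphi$ is ``redundant'' enough that $\R_T = \overline{\operatorname{span}}\{T^n\varphi\}_{n=1}^\infty$ is still all of $\h$. By \textcolor{cyan}{Lemma \ref{lemRT}} (with $k=1$), $\R_T$ is closed and equals $\overline{\operatorname{span}}\{T^n\varphi\}_{n=1}^\infty$, so $T$ is surjective if and only if $\varphi \in \overline{\operatorname{span}}\{T^n\varphi\}_{n=1}^\infty = \R_T$. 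This already gives $(ii)$ directly, and I would prove $(ii)$ first since the other three items are each a reformulation of the single condition $\varphi \in \R_T$.

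For $(iii)$: since $\R_T$ is closed, $\h = \R_T \oplus \R_T^\perp = \R_T \oplus \ker T^\ast$. Writing $\varphi = P_{\R_T}\varphi + P_{\ker T^\ast}\varphi$, the condition $\varphi \in \R_T$ fails exactly when $P_{\ker T^\ast}\varphi \neq 0$. I would like to replace this by the cleaner statement ``$S^{-1}\varphi \notin \ker T^\ast$,'' so the step here is to show $\varphi \in \R_T \iff S^{-1}\varphi \in \R_T$ — equivalently that $S^{-1}$ maps $\R_T$ onto $\R_T$. This is where I expect a small subtlety: it requires knowing that $\R_T$ (the closed span of $\{T^n\varphi\}_{n\ge 1}$) is invariant under both $S$ and $S^{-1}$. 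One route is to note $S\psi = \sum_{n\ge 0}\langle\psi,T^n\varphi\rangle T^n\varphi$; this lands in $\overline{\operatorname{span}}\{T^n\varphi\}_{n\ge 0} = \h$, which is not enough. A better route: observe that $T^n\varphi = T(T^{n-1}\varphi) \in \R_T$ for $n\ge 1$, and $S$ restricted to the frame's closed span behaves well; alternatively argue contrapositively with the reconstruction formula $\varphi = \sum_{n\ge 0}\langle\varphi, S^{-1}T^n\varphi\rangle T^n\varphi$. Actually the slick argument for $(iii)$ is: $\varphi\notin\R_T \iff \langle\varphi,\psi\rangle\ne 0$ for some $\psi\in\ker T^\ast$; and since the frame operator $S$ is a bijection of $\h$, writing $\psi = S^{-1}\eta$ and using self-adjointness, $\langle\varphi,S^{-1}\eta\rangle = \langle S^{-1}\varphi,\eta\rangle$, so the obstruction is that $S^{-1}\varphi$ is not orthogonal to $S(\ker T^\ast)$. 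I would need $S(\ker T^\ast) = \ker T^\ast$; this is the technical point to nail down, and I'd do it by showing $S$ preserves $\R_T$ (hence its orthocomplement), using $S T^n\varphi \in \R_T$ for $n \ge 1$ together with a density/continuity argument plus the fact that $S\varphi$ and all $ST^n\varphi$ for $n\ge 0$ lie in $\h$ — so more carefully, $S$ preserves $\R_T$ because $\R_T$ is spanned by vectors in the range of $T$ and one checks $S$ commutes with the projection onto $\R_T$ via the dual-frame expansion.

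For $(i)$: expand $S^{-1}\varphi$ via the frame, or rather compute $\langle \varphi, S^{-1}\varphi\rangle$. We have $\varphi = \sum_{n\ge 0}\langle\varphi, S^{-1}T^n\varphi\rangle T^n\varphi$ and symmetrically $S^{-1}\varphi = \sum_{n\ge 0}\langle S^{-1}\varphi, S^{-1}T^n\varphi\rangle T^n\varphi$; the cleanest is: $\varphi\in\R_T \iff \varphi\notin\ker T^\ast{}^{\,\perp}$-complement issues aside — concretely, $\varphi \perp \ker T^\ast$ fails iff $\langle\varphi,\psi\rangle\ne 0$ for some $\psi\in\ker T^\ast$, and by $(iii)$ this is iff $S^{-1}\varphi\notin\ker T^\ast$, i.e. $T^\ast S^{-1}\varphi \ne 0$, i.e. $\langle S^{-1}\varphi, T^n\varphi\rangle \ne 0$ for some $n\ge 1$ (since $\{T^n\varphi\}_{n\ge 1}$ spans $\R_T = (\ker T^\ast)^\perp$, its members detect nonzero elements of... wait, one must be careful: $T^\ast S^{-1}\varphi = 0 \iff S^{-1}\varphi\in\ker T^\ast \iff S^{-1}\varphi\perp\R_T \iff \langle S^{-1}\varphi, T^n\varphi\rangle = 0\ \forall n\ge 1$). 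That chain gives $(i)$ from $(iii)$ immediately. For $(iv)$: compute $\|S^{-1/2}\varphi\|^2 = \langle S^{-1}\varphi,\varphi\rangle = \langle\varphi, S^{-1}\varphi\rangle$, which by the reconstruction formula equals $\sum_{n\ge 0}|\langle\varphi, S^{-1}T^n\varphi\rangle|^2 \cdot$(no) — rather, $\langle S^{-1}\varphi,\varphi\rangle$ is the ``$n=0$ canonical dual coefficient'' $\langle\varphi, \widetilde{f_0}\rangle$ where $\widetilde{f_0} = S^{-1}\varphi$; and $1 = \langle\varphi, S^{-1}\varphi\rangle + \sum_{n\ge 1}\langle\varphi, S^{-1}T^n\varphi\rangle\overline{\langle\,\cdot\,\rangle}$... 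The honest statement is that $\langle\varphi, S^{-1}\varphi\rangle \le 1$ always, with equality iff the $n\ge 1$ dual coefficients of $\varphi$ all vanish iff $\varphi \perp S^{-1}T^n\varphi$ for all $n\ge 1$ iff $S^{-1}\varphi \perp T^n\varphi$ for all $n\ge 1$ iff $S^{-1}\varphi\in\ker T^\ast$ iff $T$ is not surjective by $(iii)$. So $(iv)$ reads: $\|S^{-1/2}\varphi\| = 1 \iff T$ not surjective, equivalently $T$ surjective $\iff \|S^{-1/2}\varphi\| \ne 1$. I expect the main obstacle to be the bookkeeping in establishing $S(\ker T^\ast) = \ker T^\ast$ (equivalently $S\R_T = \R_T$) cleanly, and in pinning down the inequality $\langle\varphi, S^{-1}\varphi\rangle\le 1$ with its equality case for $(iv)$ — both are elementary but need the dual-frame expansion written out carefully; everything else is a short chain of equivalences off of $(ii)$.
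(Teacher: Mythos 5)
Your item $(ii)$, your item $(iv)$ (the inequality $\langle\varphi,S^{-1}\varphi\rangle\le 1$ with equality exactly when the canonical dual coefficients of $\varphi$ of index $n\ge1$ vanish), and the final chain $T^{\ast}S^{-1}\varphi=0\iff S^{-1}\varphi\perp\R_T\iff\langle S^{-1}\varphi,T^n\varphi\rangle=0$ for all $n\ge1$ are all sound and rest, as in the paper, on Lemma \ref{lemRT}. The genuine gap is in $(iii)$, which you make the load-bearing item: the invariance you say you need, $S(\R_T)=\R_T$ (equivalently $S(\ker T^{\ast})=\ker T^{\ast}$), is false in general. Indeed, for $\psi\in\ker T^{\ast}=\R_T^{\perp}$ one has $S\psi=\sum_{n\ge0}\langle\psi,T^n\varphi\rangle T^n\varphi=\langle\psi,\varphi\rangle\varphi$, and $\varphi$ need not lie in $\ker T^{\ast}$. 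Concretely, let $f_1=e_1+\tfrac12 e_2$, $f_k=e_k$ for $k\ge2$ (a Riesz basis) and let $T$ be the bounded operator with $Tf_k=f_{k+1}$; then $\{T^nf_1\}_{n=0}^\infty=\{f_k\}_{k=1}^\infty$ is a frame, $\R_T=\overline{{\rm span}}\{e_k\}_{k\ge2}$, $\ker T^{\ast}=\mathbb{C}e_1$, but $Se_1=f_1\notin\mathbb{C}e_1$ and $Se_2=\tfrac12 f_1+e_2\notin\R_T$. Moreover, even if the invariance held, your ``slick'' chain would only show that, when $T$ is not surjective, $S^{-1}\varphi$ is \emph{not orthogonal} to $\ker T^{\ast}$, whereas $(iii)$ asserts the much stronger conclusion $S^{-1}\varphi\in\ker T^{\ast}$; so the reformulation does not reach the stated equivalence. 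Since you then derive $(i)$ from $(iii)$, and $(iv)$ from $(iii)$, the gap propagates.

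Two clean repairs are available. (a) The paper's route: prove $(i)$ first, directly. If $T$ is surjective and $\langle T^n\varphi,S^{-1}\varphi\rangle=0$ for all $n\ge1$, then $S^{-1}\varphi\perp\overline{{\rm span}}\{T^n\varphi\}_{n\ge1}=\h$, forcing $\varphi=0$, a contradiction; conversely, if some coefficient is nonzero, expand $T^n\varphi=\langle T^n\varphi,S^{-1}\varphi\rangle\varphi+\sum_{i\ge1}\langle S^{-1}T^n\varphi,T^i\varphi\rangle T^i\varphi$ and solve for $\varphi$ to get $\varphi\in\R_T$, hence $\R_T=\h$ by your argument for $(ii)$. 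Then $(iii)$ is immediate from $(i)$ together with $\ker T^{\ast}=\R_T^{\perp}=\big(\overline{{\rm span}}\{T^n\varphi\}_{n\ge1}\big)^{\perp}$, and your $(iv)$ goes through unchanged. (b) Alternatively, fix $(iii)$ directly without $(i)$: if $T$ is not surjective, pick $0\ne\psi\in\ker T^{\ast}$; the identity $S\psi=\langle\psi,\varphi\rangle\varphi$ and injectivity of $S$ give $\langle\psi,\varphi\rangle\ne0$, whence $S^{-1}\varphi=\psi/\langle\psi,\varphi\rangle\in\ker T^{\ast}$; the other direction is trivial because surjectivity gives $\ker T^{\ast}=\{0\}$ while $S^{-1}\varphi\ne0$. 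Either repair makes your ordering $(ii)\Rightarrow(iii)\Rightarrow(i),(iv)$, or the paper's ordering $(i)\Rightarrow(ii),(iii),(iv)$, work.
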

\begin{proof}
$(i)$ First assume that $T$ is surjective. Then $\h=\overline{\rm span}\{T^n \varphi\}_{n=1}^{\infty}$ by \textcolor{cyan}{Lemma \ref{lemRT}}. If $\langle T^n\varphi, S^{-1}\varphi\rangle=0$ for all $n\geq1$, then $ S^{-1}\varphi\perp\h$. This implies that $\varphi=0$, which is a contradiction. 
Conversely, assume that $\langle T^n\varphi, S^{-1}\varphi\rangle\neq0$ for some $n\geq1$. Then 
\[T^n\varphi=\sum_{i=0}^\infty\langle S^{-1}T^n\varphi, T^i\varphi\rangle T^i\varphi=
\langle T^n\varphi, S^{-1}\varphi\rangle\varphi+\sum_{i=1}^\infty\langle S^{-1}T^n\varphi, T^i\varphi\rangle T^i\varphi.\]
Therefore $\varphi\in \R_T$. On the other hand, $\{T^n\varphi\}_{n=1}^\infty$ is a frame sequence, and  
$\R_T=\overline{\rm span}\{T^n \varphi\}_{n=1}^{\infty}$ by \textcolor{cyan}{Lemma \ref{lemRT}}. Hence  $\varphi\in \R_T$ implies that $\R_T=\overline{\rm span}\{T^n \varphi\}_{n=1}^{\infty}=\overline{\rm span}\{T^n \varphi\}_{n=0}^{\infty}=\h,$ as desired. The result in $(ii)$ follows from the proof of $(i)$. To prove $(iii)$, it follows from $(i)$ that $T$ is surjective if and only if  $S^{-1}\varphi\notin[\R_T]^\perp =\ker{T^{\ast}}.$ For the proof of $(iv)$, assume
that $T$ is surjective and $\|S^{-{1/2}}\varphi\|=1.$ Since 
\begin{equation}\label{iv}
\varphi=\langle S^{-1}\varphi,\varphi\rangle\varphi+
\sum_{n=1}^\infty\langle S^{-1}\varphi,T^n\varphi\rangle T^n\varphi,
\end{equation}
we get $\sum_{n=1}^\infty\langle S^{-1}\varphi,T^n\varphi\rangle T^n\varphi=0.$ Then $\sum_{n=1}^\infty\big|\langle S^{-1}\varphi,T^n\varphi\rangle \big|^2=0.$ Applying $(i)$, we conclude that $T$ is not surjective, which is a contradiction. Conversely,  if $\|S^{-{1/2}}\varphi\|\neq1$, then  (\ref{iv}) implies that  there exists $n\geq1$ such that $\langle T^n\varphi, S^{-1}\varphi\rangle\neq0.$ Hence $T$ is surjective by $(i)$.
\end{proof}
Since a Riesz base and its canonical dual are bi-orthogonal, we have
\begin{cor}
Let $T\in B(\h)$ and $\varphi\in\h$. Assume that $\{T^n\varphi\}_{n=0}^\infty$ is a Riesz basis for $\h.$ Then $T$ is not surjective. In particular, $\varphi\notin \R_T$ and $S^{-1}\varphi\in\ker T^{\ast}.$
\end{cor}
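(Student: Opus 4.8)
The plan is to deduce the statement directly from the preceding Proposition, using the single structural fact recorded in the statement itself: a Riesz basis and its canonical dual are bi-orthogonal. First I would note that a Riesz basis is in particular a frame, so the frame operator $S$ of $\{T^n\varphi\}_{n=0}^{\infty}$ is bounded, positive and invertible, and the canonical dual $\{S^{-1}T^n\varphi\}_{n=0}^{\infty}$ is again a Riesz basis. Writing $f_0=\varphi$ and $f_n=T^n\varphi$, bi-orthogonality of $\{f_n\}_{n\ge 0}$ and $\{S^{-1}f_n\}_{n\ge 0}$ reads $\langle f_m, S^{-1}f_n\rangle=\delta_{mn}$; specializing to $n=0$ gives
\[
\langle T^m\varphi, S^{-1}\varphi\rangle = 0 \qquad \text{for every } m\geq 1.
\]

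Second, I would feed this into part $(i)$ of the previous Proposition: since $\langle T^n\varphi, S^{-1}\varphi\rangle=0$ for all $n\geq 1$, the criterion there fails, so $T$ is not surjective. The claim $\varphi\notin\R_T$ is then exactly the contrapositive of part $(ii)$, and $S^{-1}\varphi\in\ker T^{\ast}$ is the contrapositive of part $(iii)$; neither requires anything further.

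Alternatively, and without invoking part $(i)$, one can argue directly: if $T$ were surjective, then by \textcolor{cyan}{Lemma \ref{lemRT}} one would have $\h=\overline{\rm span}\{T^n\varphi\}_{n=1}^{\infty}$. But the bi-orthogonal functional $S^{-1}\varphi$ annihilates every $T^n\varphi$ with $n\geq 1$ while $\langle\varphi,S^{-1}\varphi\rangle=1$, so $\overline{\rm span}\{T^n\varphi\}_{n=1}^{\infty}\subseteq\{S^{-1}\varphi\}^{\perp}$ is a proper closed subspace, contradicting surjectivity; the remaining two assertions follow as before.

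There is essentially no obstacle here, since this is a corollary of the Proposition. The only point worth stating carefully is the bi-orthogonality relation $\langle f_m,S^{-1}f_n\rangle=\delta_{mn}$ for a Riesz basis, which is standard and follows, for instance, from the uniqueness of the coefficients in the expansion with respect to a Riesz basis together with the reconstruction formula $f=\sum_n\langle f,S^{-1}f_n\rangle f_n$ applied to $f=f_m$.
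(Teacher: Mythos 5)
Your proof is correct and matches the paper's intended argument exactly: the paper's one-line justification is precisely the bi-orthogonality relation $\langle T^n\varphi, S^{-1}\varphi\rangle=0$ for $n\geq 1$, fed into parts $(i)$--$(iii)$ of the preceding proposition. Your alternative direct argument via Lemma \ref{lemRT} is a fine but unnecessary extra.
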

Let $ \{f_{k}\}_{k=1}^{\infty} $ be a  frame for $\h$ with frame operator $S$. We investigate the   question:
 Does there exist some $ \varphi\in {\h} $ such that $\{S^{n}\varphi\}_{n= 0}^ {\infty}$ is a frame?
There are many frames for which this cannot happen. For example, if $ \{f_{k}\}_{k=1}^{\infty} $ is a tight frame for $\h$ with bound $ A $, then for $\varphi (\neq0)\in\h$, we have
\begin{equation*}
\sum_{n=0}^{\infty}| \langle f, S^{n}\varphi\rangle |^{2}=\sum_{n=0}^{\infty}| \langle f, A^{n}\varphi\rangle |^{2}=| \langle f, \varphi\rangle |^{2}\sum_{n=0}^{\infty} A^{2n},\quad f\in\h.
\end{equation*}
Therefore, $ \{S^{n}\varphi\}_{n= 0}^ {\infty} $ is a frame for $\h$ if and only if $\dim\h=1$ and $A<1$.
\par
The following exhibits a concrete example of a frame $\ftk= \{T^n f_1\}_{n=0}^\infty$
for which $T$ is a frame operator:
\bex\label{6057a} Consider the operator $T:\ltn\to\ltn$ defined by
\begin{equation}\label{6057b}
T\{c_k\}_{k=1}^\infty=\{(1-2^{-k})c_k\}_{k=1}^\infty,\quad \{c_k\}_{k=1}^\infty\in\ltn.
\end{equation}
Letting $\lambda_k=1-2^{-k}$ for $k\in\mn,$
Aldroubi et al. \cite{A1} proved that the sequence $\{ T^n b\}_{n=0}^\infty$ is a frame for $\ltn$
whenever  $b=\{\sqrt{1-\lambda_k^2}\}_{k=1}^\infty$.
Defining the bounded operator $U:\ltn\to\ltn$ by $U\{c_k\}_{k=1}^\infty=\{\sqrt{1-2^{-k}}c_k\}_{k=1}^\infty$,
we have $U=U^{\ast}$ and $T=U^2.$  Let $\{\delta_k\}_{k=1}^\infty$ be the standard basis of $\ltn$ and let $S$ be the frame operator of  $ \{U\delta_k \}_{k=1}^\infty =\{ \sqrt{1-2^{-k}}\delta_k  \}_{k=1}^\infty$.  Then
\bes Sf=  \sum_{k=1}^\infty \la f, U\delta_k\ra U\delta_k=
U \sum_{k=1}^\infty \la U^*f, \delta_k\ra \delta_k=UU^*f=Tf,\quad f\in\ltn,
\ens
i.e., $S=T.$
\ep \enx
Motivated by \textcolor{cyan}{Example $ \ref{6057a} $}, we can characterize the case that a frame  has a representation $\{T^n \varphi\}_{n=0}^{\infty} $, where $ T $ is a frame operator. Indeed, we show that positive and invertible operators are a characteristic of  frame operators: 
\begin{prop}
	Let $ T\in B(\h)$. Then the followings are equivalent:
\bei
	\item [$ (i)$] $ T $ is positive and invertible.
	\item [$ (ii) $] $ T $ is the frame operator for a frame.
	\eni
\end{prop}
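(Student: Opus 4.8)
The plan is to prove the equivalence in both directions, with the interesting content being $(i)\Rightarrow(ii)$.

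For $(ii)\Rightarrow(i)$, this is essentially classical: if $T=S$ is the frame operator of a frame $\{f_k\}$, then $S=UU^*$ where $U$ is the synthesis operator, so $S$ is self-adjoint and $\langle Sf,f\rangle=\|U^*f\|^2\ge 0$, giving positivity. By \textcolor{cyan}{Theorem \ref{TM1}}, the synthesis operator of a frame is surjective, hence $S=UU^*$ is surjective; and since $\langle Sf,f\rangle\ge A\|f\|^2$ from the lower frame bound, $S$ is injective with bounded inverse. So $S$ is positive and invertible. I would write this out in two or three lines.

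For $(i)\Rightarrow(ii)$, suppose $T$ is positive and invertible. The idea, mimicking \textcolor{cyan}{Example \ref{6057a}}, is to take the positive square root $T^{1/2}\in B(\h)$ (which exists and is positive since $T\ge 0$, and is invertible since $T$ is), fix an orthonormal basis $\{e_k\}_{k=1}^\infty$ of $\h$, and consider the sequence $f_k:=T^{1/2}e_k$. First I would check that $\{f_k\}$ is a frame: since $T^{1/2}$ is bounded and invertible (bounded below), for every $f\in\h$ we have $\sum_k|\langle f,f_k\rangle|^2=\sum_k|\langle (T^{1/2})^*f,e_k\rangle|^2=\|T^{1/2}f\|^2$ because $T^{1/2}$ is self-adjoint, and $\|T^{1/2}\|^{-2}\|f\|^2\le\|T^{1/2}f\|^2\le\|T^{1/2}\|^2\|f\|^2$ — or more precisely one uses $\|(T^{1/2})^{-1}\|^{-2}\|f\|^2\le\|T^{1/2}f\|^2$ — so the frame inequalities hold. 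Then I would compute the frame operator of $\{f_k\}$: for $f\in\h$,
\begin{equation*}
Sf=\sum_{k=1}^\infty\langle f,f_k\rangle f_k=\sum_{k=1}^\infty\langle f,T^{1/2}e_k\rangle T^{1/2}e_k=T^{1/2}\sum_{k=1}^\infty\langle T^{1/2}f,e_k\rangle e_k=T^{1/2}T^{1/2}f=Tf,
\end{equation*}
using self-adjointness of $T^{1/2}$ and the fact that $\{e_k\}$ is an orthonormal basis. Hence $S=T$, so $T$ is the frame operator of the frame $\{f_k\}$, establishing $(ii)$.

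The main technical point — and the only real obstacle — is justifying the existence and properties of $T^{1/2}$: that a positive invertible bounded operator has a bounded, positive, invertible, self-adjoint square root. This is standard spectral theory (continuous functional calculus applied to $\sqrt{\cdot}$ on the spectrum $\sigma(T)\subset(0,\infty)$), so I would simply cite it or state it as well known rather than reprove it. The interchange of $T^{1/2}$ with the infinite sum in the frame-operator computation is legitimate because $T^{1/2}$ is bounded and the partial sums converge in norm. Everything else is a routine verification of frame bounds, so this step is where I would be most careful to state the hypotheses cleanly but expect no genuine difficulty.
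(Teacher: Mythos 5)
Your proof is correct and follows essentially the same route as the paper: the paper also writes $T=UU^{\ast}$ for a bounded surjective operator $U$ (your explicit choice $U=T^{1/2}$ is the natural instantiation), sets $f_k=Ue_k$ for an orthonormal basis $\{e_k\}_{k=1}^{\infty}$, and verifies that the frame operator of $\{f_k\}_{k=1}^{\infty}$ is $T$, while dismissing $(ii)\Rightarrow(i)$ as clear. Your write-up just makes the square root and the frame-bound verification explicit, which is fine.
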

\begin{proof}
	To prove $ (i)\Rightarrow (ii) $, consider the bounded and surjective operator $ U : \h \rightarrow \h $ such that $ T= UU^{\ast} $. Let $ \{e_{k}\}_{k=1}^{\infty} $ denote an orthonormal basis for $ \h $, and let $ f_{k}=Ue_{k} $ for each $k\in\Bbb{N}$. Then $ \{f_{k}\}_{k=1}^{\infty} $ is a frame and its frame operator $ T $ because
	\begin{equation*}
	Tf=UU^{\ast}f=\sum_{k=1}^{\infty}\langle  f, Ue_{k}\rangle Ue_{k}=\sum_{k=1}^{\infty}\langle  f, f_{k}\rangle f_{k}, \quad  f\in \h.
	\end{equation*}
	This proves $ (ii) $. The implication $ (ii)\Rightarrow (i) $ is clear.
\end{proof}
In the following proposition we provide a necessary condition for $ \{S^{n}g\}_{n\geq 0, g\in \g} $ to be a frame, where $ \g\subset \h $ is a countable set.
\begin{prop}
	Assume that $ \{f_{k}\}_{k=1}^{\infty} $ is a frame with lower frame bound $ A $ and frame operator $ S $. If $\g$ is a countable subset of  $\h $, and  $\{S^{n}g\}_{n\geq 0, g\in \g} $ is a frame for $ \h $, then $A<1 $.
\end{prop}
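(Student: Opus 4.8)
The plan is to use only that a frame is necessarily a Bessel sequence, together with the fact that the frame operator of a frame with lower bound $A$ dominates $A$ times the identity. First I would recall that the lower frame inequality for $\{f_k\}_{k=1}^\infty$ reads $\la Sf,f\ra=\suk\abs{\la f,f_k\ra}^2\ge A\norm{f}^2$ for all $f\in\h$, i.e. $S\ge AI$ in the operator order. Since $A>0$ this forces $\sigma(S)\subseteq[A,\infty)$, so applying the functional calculus of the positive self-adjoint operator $S$ to the function $\lambda\mapsto\lambda^{n}-A^{n}$ (which is nonnegative on $[A,\infty)$) gives $S^{n}\ge A^{n}I$ for every $n\ge 0$. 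In particular $\la S^{n}g,g\ra\ge A^{n}\norm{g}^2$, and this number is real and nonnegative because $S^{n}$ is positive self-adjoint.

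Next I would single out one useful element of $\g$. Because $\{S^{n}g\}_{n\ge 0,\,g\in\g}$ is a frame for the nontrivial space $\h$, the set $\g$ cannot consist only of the zero vector, so I may fix $g_0\in\g$ with $g_0\ne 0$. Being a frame, $\{S^{n}g\}_{n\ge 0,\,g\in\g}$ is in particular a Bessel sequence with some bound $B'$; evaluating the Bessel inequality at $f=g_0$ and keeping only the terms with $g=g_0$ (which only decreases the left-hand side, every term being nonnegative) yields
\[
\sum_{n=0}^{\infty}\abs{\la g_0,S^{n}g_0\ra}^2\;\le\;\sum_{g\in\g}\sum_{n=0}^{\infty}\abs{\la g_0,S^{n}g\ra}^2\;\le\;B'\norm{g_0}^2 .
\]

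Finally I would combine the two estimates: from $\la S^{n}g_0,g_0\ra\ge A^{n}\norm{g_0}^2$ we get $\abs{\la g_0,S^{n}g_0\ra}^2\ge A^{2n}\norm{g_0}^4$, so the display above forces $\norm{g_0}^4\sum_{n=0}^{\infty}A^{2n}\le B'\norm{g_0}^2$, hence $\sum_{n=0}^{\infty}A^{2n}\le B'/\norm{g_0}^2<\infty$. A geometric series $\sum_{n\ge 0}A^{2n}$ is finite only when $A^2<1$, i.e. $A<1$, which is the assertion. I do not foresee a real obstacle; the only two points needing a word of care are that truncating a Bessel sum to a subset of indices preserves the inequality, and that $S\ge AI$ propagates to $S^{n}\ge A^{n}I$ via the functional calculus rather than through (false) operator monotonicity of $t\mapsto t^n$. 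One could alternatively argue by contradiction — if $A\ge 1$ then $S\ge I$, hence $S^{n}\ge I$ and $\sum_{n\ge 0}\abs{\la g_0,S^{n}g_0\ra}^2\ge\sum_{n\ge 0}\norm{g_0}^4=\infty$, contradicting the Bessel property — but the direct computation above avoids a case split.
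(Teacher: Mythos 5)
Your proof is correct, but it follows a genuinely different route from the paper's. The common starting point is the same: from $\langle Sf,f\rangle\ge A\|f\|^{2}$ one gets that powers of $S$ dominate powers of $A$ (the paper shows $A^{m}\|f\|\le\|S^{m}f\|$ by induction on norms; you get $S^{n}\ge A^{n}I$ via the spectral mapping/functional calculus, correctly avoiding the false operator monotonicity of $t\mapsto t^{n}$ --- either version suffices). The divergence mechanism, however, differs. The paper invokes an external result ([A3, Theorem 7]): since $\{S^{n}g\}_{n\ge 0,\,g\in\g}$ is a frame, $\|S^{m}f\|\to 0$ for every $f$, whence $A^{m}\to 0$ and $A<1$. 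You instead use only the upper (Bessel) frame inequality evaluated at a single nonzero $g_{0}\in\g$ (which exists because a frame for $\h\neq\{0\}$ cannot consist solely of zero vectors), discard the nonnegative terms with $g\neq g_{0}$, and conclude $\sum_{n\ge 0}A^{2n}\le B'/\|g_{0}\|^{2}<\infty$, forcing $A<1$. What your approach buys is self-containedness --- no citation of the dynamical-sampling norm-decay theorem is needed --- and it actually uses a weaker hypothesis: it only requires $\{S^{n}g\}_{n\ge 0,\,g\in\g}$ to be a Bessel sequence with some $g\in\g$ nonzero, not a full frame. What the paper's route buys is brevity modulo the cited theorem, and an argument carried out entirely at the level of norm inequalities, without appeal to the spectral theorem. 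Both proofs are valid; yours is the more elementary and slightly more general one.
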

\begin{proof}
	Since $ A \langle f, f \rangle \leq \langle Sf, f \rangle $, we get
	$A\|f\|\leq\| Sf \|$ for all $f\in\h$. Therefore,
	\begin{equation*}
	\langle S^{2}f, f\rangle = \langle Sf, Sf \rangle = \| Sf \|^{2}\geq A^{2}\|f\|^{2}=A^{2}\langle f, f \rangle,
	\end{equation*}
	and then $ A^2\|f\|\leq \|S^2 f\| $  for all $f\in\h$. 
	By Induction, we conclude that for each positve integer $ m,$
	\begin{equation*}
	\begin{aligned}
	A^{m}\| f\|\leq \| S^{m}f \|, \quad f\in\h.
	\end{aligned}
	\end{equation*}
	Since $ \{S^{n}g\}_{n\geq 0, g\in \g} $ is a frame for $ {\h} $,  we get $\| S^{m}f\| \rightarrow 0$ as $m\rightarrow \infty$ for all $f\in\h$ by [\cite{A3}, \textcolor{cyan}{Theorem 7 }]. 
	Then $A^{m}\rightarrow 0$ as $m\rightarrow \infty,$ and this leads to get $ A< 1 $.
\end{proof}
\begin{rem}
	Suppose that $ \{f_k\}_{k=1}^{\infty} $ is a frame for $ \h $ with lower bound $ A $. Let $ S $ be the frame operator for $ \{f_k\}_{k=1}^{\infty} $ such that $ V\subset\h $ is an invariant subspace under $ S $. If there exists $ \lambda\in [0, 1) $ such that $ \| S\varphi\|\leq \lambda\| \varphi\|$ for all $\varphi\in V $,
	then $ \{S^n \varphi\}_{n=0}^{\infty} $ is a Bessel sequence for all $ \varphi\in V $. Indeed, for all $ f\in \h $ and $ \varphi\in V $, we have that
	\begin{equation*}
	\sum_{n=0}^{\infty}|\langle f, S^n \varphi\rangle |^{2}\leq \| f\|^{2}\sum_{n=0}^{\infty}\| S^n \varphi\|^2\leq \| f\|^2 \sum_{n=0}^{\infty} \lambda^{2n}=\dfrac{\| f\|^2}{1- \lambda^{2}}.
	\end{equation*}
\end{rem}
It follows from [\cite{A3}, \textcolor{cyan}{Theorem 7 }] that
for any unitary operator $ T: {\h}\rightarrow {\h} $ and any set of vectors $ G \subseteq {\h} $, $ \{T^{n}g\}_{g\in \g, n\geq 0} $ is not a frame .
\begin{prop}\label{Gl}
	Let $\{e_{k}\}_{k=1}^{\infty}$ and $\{\delta_{k}\}_{k=1}^{\infty}$ denote two orthonormal bases for a Hilbert space $\h$, and consider
	the mixed frame operator
	\bes T: \h \rightarrow \h, \quad Tf= \suk \la f, e_k \ra \delta_k.   \ens
	Then $\{T^n \varphi\}_{n=0}^\infty $ cannot be a frame for $\h$ for any $\varphi \in \h.$ 
\end{prop}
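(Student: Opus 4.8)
The plan is to observe that the mixed frame operator $T$ appearing in the statement is in fact a \emph{unitary} operator, after which the conclusion is immediate from the no-go result for unitary orbits recorded just before the proposition.

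First I would check that $T$ is well-defined and, more importantly, an isometry. For $f\in\h$ the scalars $\{\langle f,e_k\rangle\}_{k=1}^\infty$ are square-summable (with $\suk|\langle f,e_k\rangle|^2=\|f\|^2$ by Parseval for the orthonormal basis $\{e_k\}$), and since $\{\delta_k\}_{k=1}^\infty$ is orthonormal the partial sums of $\suk\langle f,e_k\rangle\delta_k$ form a Cauchy sequence; hence $Tf$ is well-defined, and
\[
\|Tf\|^2=\sum_{k,j}\langle f,e_k\rangle\,\overline{\langle f,e_j\rangle}\,\langle\delta_k,\delta_j\rangle=\suk|\langle f,e_k\rangle|^2=\|f\|^2 .
\]
Equivalently, $Te_k=\delta_k$ for every $k$, so $T$ carries the orthonormal basis $\{e_k\}_{k=1}^\infty$ onto the orthonormal basis $\{\delta_k\}_{k=1}^\infty$; since the latter is complete in $\h$, $T$ is also surjective. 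A surjective isometry of a Hilbert space is unitary, so $T$ is unitary.

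Then, since $T$ is unitary, I would invoke the consequence of [\cite{A3}, Theorem 7] stated immediately above the proposition: for any unitary operator on $\h$ and any set of vectors $G\subseteq\h$, the system $\{T^ng\}_{g\in G,\,n\geq0}$ is not a frame for $\h$. Applying this with $G=\{\varphi\}$ shows that $\{T^n\varphi\}_{n=0}^\infty$ is not a frame, for every $\varphi\in\h$. (If one prefers a self-contained argument: for $\varphi\neq0$ one has $\|T^n\varphi\|=\|\varphi\|\neq0$ for all $n$, so $T^n\varphi\not\to0$, contradicting the necessary decay condition $\|T^nf\|\to0$ for frame orbits furnished by [\cite{A3}, Theorem 7]; and for $\varphi=0$ the orbit is trivially not a frame.)

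There is essentially no obstacle in this proof: the single point is the recognition that the mixed frame operator associated with two orthonormal bases is unitary, and once that is observed the statement is a one-line corollary of the cited material. The only thing worth emphasizing is the contrast with general mixed frame operators (which, as remarked earlier in the paper, can be arbitrary bounded operators): the orthonormality of \emph{both} sequences is exactly what forces unitarity here.
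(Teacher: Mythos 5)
Your proposal is correct and follows essentially the same route as the paper: you verify that $T$ maps the orthonormal basis $\{e_k\}_{k=1}^\infty$ onto $\{\delta_k\}_{k=1}^\infty$, hence is unitary, and then invoke the no-go result for unitary orbits from \cite{A3} (the paper cites Corollary 2 there, you cite the consequence of Theorem 7 stated just before the proposition, which is the same fact). Your extra verification of well-definedness and isometry is a harmless elaboration of the paper's one-line observation.
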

\begin{proof} Since $Te_j= \delta_j$ for all $j\in \mn,$ the operator $ T $ maps the orthonormal basis
	$\{e_{k}\}_{k=1}^{\infty}$ onto the orthonormal basis $\{\delta_{k}\}_{k=1}^{\infty}$. Therefore $ T $ is unitary.  By [\cite{A3}, \textcolor{cyan}{Corollary 2}], we conclude that  $\{T^n \varphi\}_{n=0}^\infty $ is not  a frame for $\h$ for any $\varphi \in \h.$ 
\end{proof}
By use of \textcolor{cyan}{Theorem $ \ref{TM1} $} we get some useful results related to iterative actions of a mixed frame operator:
\begin{cor}
\bei
	  Suppose that $ \{e_{k}\}_{k=1}^{\infty} $ and $ \{\delta_{k}\}_{k=1}^{\infty} $ are orthonormal bases for $ \h $. The following statements hold:\\	
   \item[$ (i)$] Let $\{Ue_{k}\}_{k=1}^{\infty} $ be  a Riesz basis  for $ \h $  and $Gf:= \suk \la f, \delta_k \ra Ue_k$ for all $f\in\h$, where  $ U\in GL(\h) $ is a bounded bijective  operator.
     If $ \{G^{n}\varphi\}_{n=0}^{\infty} $  is a frame for some $ \varphi\in \h $, then $ \|U\|\geq 1 $.\\
	\item[$ (ii) $] Let $\{Ue_{k}\}_{k=1}^{\infty} $ and $\{V\delta_{k}\}_{k=1}^{\infty} $ be two frames for $ {\h} $ and $Gf:= \suk \la f, V\delta_k \ra Ue_k$ for all $f\in\h$,  where  $ U, V:\h\to\h $ are bounded surjective linear operators.
 If $ \{G^{n}\varphi\}_{n=0}^{\infty} $ is a frame for $ \h $, then $ \| U \|\|V\|\geq 1 $.
\eni
\end{cor}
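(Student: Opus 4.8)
The plan is to reduce both parts to a single statement: \emph{if $\{T^n\varphi\}_{n=0}^{\infty}$ is a frame for $\h$, then $\|T\|\ge 1$}. Granting this, I would prove $(i)$ by factoring $G$ through a unitary. The operator $Wf:=\suk\la f,\delta_k\ra e_k$ carries the orthonormal basis $\{\delta_k\}_{k=1}^{\infty}$ onto the orthonormal basis $\{e_k\}_{k=1}^{\infty}$, hence $W$ is unitary, and since $U$ is bounded, $Gf=U\big(\suk\la f,\delta_k\ra e_k\big)=UWf$ for every $f\in\h$. Thus $\|G\|\le\|U\|\,\|W\|=\|U\|$, so if $\{G^n\varphi\}_{n=0}^{\infty}$ is a frame the statement above gives $1\le\|G\|\le\|U\|$. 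For $(ii)$ the same $W$ works after rewriting $\la f,V\delta_k\ra=\la V^{\ast}f,\delta_k\ra$: then $Gf=UW(V^{\ast}f)=UWV^{\ast}f$, so $\|G\|\le\|U\|\,\|W\|\,\|V^{\ast}\|=\|U\|\,\|V\|$, and a frame $\{G^n\varphi\}_{n=0}^{\infty}$ forces $1\le\|G\|\le\|U\|\,\|V\|$.

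The real content is the reduced statement, which I would prove by contradiction. Suppose $\{T^n\varphi\}_{n=0}^{\infty}$ is a frame for $\h$ with lower bound $A>0$ while $\|T\|<1$. Then, as in the computation opening Section~2, $\sum_{n=0}^{\infty}\|T^n\varphi\|^2\le\|\varphi\|^2\sum_{n=0}^{\infty}\|T\|^{2n}=\frac{\|\varphi\|^2}{1-\|T\|^2}<\infty$, so the orbit is norm-square-summable. Consequently the frame operator $S$ is the operator-norm limit of its finite-rank truncations $S_Nf:=\sum_{n=0}^{N}\la f,T^n\varphi\ra T^n\varphi$, because $\|S-S_N\|\le\sum_{n>N}\|T^n\varphi\|^2\to 0$; hence $S$ is compact. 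On the other hand $\la Sf,f\ra=\sum_{n=0}^{\infty}|\la f,T^n\varphi\ra|^2\ge A\|f\|^2$, so $S\ge AI$ is bounded below and therefore invertible. An invertible operator on the infinite-dimensional space $\h$ cannot be compact; this contradiction gives $\|T\|\ge 1$.

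I expect the reduced statement to be the only real obstacle, and even there the difficulty lies in finding the right observation rather than in any calculation: the hypothesis $\|T\|<1$ gives \emph{norm-square-summability} of the orbit, which is strictly stronger than the Bessel property recorded in Section~2 and makes the frame operator compact, hence incompatible with a uniform lower frame bound in infinite dimensions. Once that is in hand, the two corollaries are just bookkeeping with the submultiplicativity of the operator norm. (One could instead route the contradiction through trace class, since $\operatorname{tr}S=\sum_{n\ge 0}\|T^n\varphi\|^2<\infty$ while a positive invertible operator on an infinite-dimensional space has infinite trace; the compactness version is slightly cleaner.)
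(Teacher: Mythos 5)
Your reduction is exactly the paper's: the paper defines $Tf=\suk\la f,\delta_k\ra e_k$, notes it is isometric (it maps one orthonormal basis onto another), writes $G=UT$ in $(i)$ and $G=UTV^{\ast}$ in $(ii)$, and concludes $\|G\|\leq\|U\|$, respectively $\|G\|\leq\|U\|\|V\|$ — precisely your $W$. The only point of divergence is how the inequality $\|G\|\geq 1$ is obtained: the paper simply cites Theorem 9 of \cite{A3}, whereas you prove the reduced statement yourself, arguing that $\|T\|<1$ would make the orbit norm-square-summable, hence make the frame operator a norm limit of its finite-rank truncations and therefore compact, while the lower frame bound gives $S\geq AI$, so $S$ is invertible — impossible on the infinite-dimensional $\h$ of the paper's standing assumption. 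That argument is correct (the bound $\|S-S_N\|\leq\sum_{n>N}\|T^n\varphi\|^2$ is right, and the trace-class variant works equally well), so your version buys a self-contained proof of the one external ingredient, at the cost of reproving a known result; in every other respect the two proofs coincide, including the use of $\la f,V\delta_k\ra=\la V^{\ast}f,\delta_k\ra$ and submultiplicativity of the operator norm in part $(ii)$.
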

\begin{proof}
$ (i) $	We define the operator $T:\h\to \h$ by $ Tf=\sum \langle f, \delta_{k} \rangle e_{k} $. It is clear that $T$ is isometric, and $Gf=UTf$ for all $f\in\h$. Therefore, $\|G\|\leq\|U\|$.
 On the other hand, [\cite{A3}, \textcolor{cyan}{Theorem 9}] shows that $ \| G \|\geq 1 $, which yields the result.\\
$ (ii) $ Let $T$ as in $(i)$. Therefore $G=UTV^{\ast}$, and we get $\|G\|\leq\|U\|\|V\|$. Hence, $\|U\|\|V\|\geq1$ by [\cite{A3}, \textcolor{cyan}{Theorem 9}].
\end{proof}
    \begin{cor}
   Suppose that $\{e_{k}\}_{k=1}^{\infty}$ and $\{\delta_{k}\}_{k=1}^{\infty}$ are two orthonormal bases for a Hilbert space $\h$.
    	\bei
    		\item [$ (i) $]  Let $ \ftk $ be a Parseval frame for $ \h $ and let $T$ be the mixed frame operator defined by $Tf=\sum_{k=1}^\infty\langle f,f_k\rangle e_k$.  If $ \{T^{n}\varphi\}_{n=0}^{\infty} $ is a frame for $ \h $ for some $\varphi\in\h$, then $ T $ is not a surjective operator.\\
    		\item[$ (ii) $] Let $\{ U\delta_{k}\}_{k=1}^{\infty} $ be a frame for $ {\h} $ and $Tf=\sum_{k=1}^{\infty} \langle f, U\delta_{k}\rangle e_{k}$, where  $ U:\h\rightarrow\h $ is a bounded surjective linear operator. If $ \{T^{n}\varphi\}_{n=0}^{\infty} $ is a frame for $ {\h} $ for some $\varphi\in\h$, then $ U^{\ast}U\neq I $, i.e., $U$ is not isometric.
    	\eni
	\end{cor}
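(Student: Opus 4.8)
The plan is to reduce both parts to the no-go result recorded above: for any unitary operator $V$ on $\h$ and any $\varphi\in\h$, the orbit $\{V^n\varphi\}_{n=0}^\infty$ is not a frame for $\h$. The mechanism in each case is a factorization of the mixed frame operator $T$ through a unitary, which under the stated hypotheses forces $T$ itself to be unitary, yielding a contradiction.

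For $(i)$, I would first write $T=WC$, where $C:\h\to\ell^2(\mathbb{N})$, $Cf=\{\langle f,f_k\rangle\}_{k\in\mathbb{N}}$, is the analysis operator of the Parseval frame $\{f_k\}_{k=1}^\infty$, and $W:\ell^2(\mathbb{N})\to\h$, $W\{c_k\}_{k=1}^\infty=\sum_{k=1}^\infty c_k e_k$, which is unitary since $\{e_k\}_{k=1}^\infty$ is an orthonormal basis. Because the frame is Parseval, $\|Cf\|^2=\sum_{k=1}^\infty|\langle f,f_k\rangle|^2=\|f\|^2$, so $C$ is an isometry and $T^*T=C^*W^*WC=C^*C=I$; hence $T$ is an isometry. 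If $T$ were also surjective it would be a surjective isometry, i.e.\ unitary, and then $\{T^n\varphi\}_{n=0}^\infty$ could not be a frame, contrary to hypothesis. Therefore $T$ is not surjective.

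For $(ii)$ the argument is parallel after rewriting $\langle f,U\delta_k\rangle=\langle U^*f,\delta_k\rangle$, which gives $Tf=\sum_{k=1}^\infty\langle U^*f,\delta_k\rangle e_k=RU^*f$, where $R:\h\to\h$ is the unitary operator determined by $R\delta_k=e_k$. Suppose toward a contradiction that $U^*U=I$. Then $U$ is an isometry; being surjective as well, $U$ is unitary, so $U^*$ is unitary and $T=RU^*$ is a composition of unitaries, hence unitary --- again incompatible with $\{T^n\varphi\}_{n=0}^\infty$ being a frame. Thus $U^*U\neq I$, i.e.\ $U$ is not isometric.

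I do not anticipate a genuine obstacle here. The points that need care are the two factorizations $T=WC$ and $T=RU^*$, the implication that a Parseval frame has isometric analysis operator, and the remark that a surjective isometry is unitary; all of these are routine. One should also note that the quoted unitary no-go theorem is applied with the singleton generating set $G=\{\varphi\}$, which is precisely the form in which it is stated above.
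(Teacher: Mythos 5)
Your proof is correct and follows essentially the same route as the paper: in both parts you show that under the hypothesis $T$ would be unitary (in $(i)$ via the Parseval identity making $T$ an isometry, in $(ii)$ via a factorization equivalent to the paper's observation that $TU\delta_k=e_k$), and then invoke the no-go result that orbits of unitary operators cannot be frames. The only cosmetic difference is that you make the factorizations $T=WC$ and $T=RU^{*}$ explicit, whereas the paper computes $\|Tf\|=\|f\|$ and $TU\delta_k=e_k$ directly.
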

\begin{proof}
		$ (i) $ 
	Since $ \ftk $ is a Parseval frame, we have $ \| Tf\|^{2}=\sum_{k=1}^{\infty} \vert\langle f, f_{k}\rangle\vert ^{2} = \| f\|^{2}$ for all $f\in\h$.
	 Then $ T^{\ast}T=I $. If we suppose that $ T $ is surjective, then $ T $ is unitary. Using [\cite{A3}, \textcolor{cyan}{Corollary 2}], we conclude that $ \{T^{n}\varphi\}_{n=0}^{\infty} $ is not a frame for $ {\h} $. For part $ (ii) $, if $ U^{\ast}U=I $ and $ U $ is surjective, then $ U $ will be a unitary operator. Since $ TU \delta_{k}=e_k$ for all $k\in\mathbb{N}$, we get  $ TU $ is unitary. Therefore $T$ is unitary, and then $ \{T^{n}\varphi\}_{n=0}^{\infty} $ cannot be a frame for $ {\h} $.
\end{proof}
In the case of normal operators, we have the following result for infinite dimensional Hilbert spaces:
\begin{lemma}\label{NOR}
Suppose that $ T:\h\rightarrow\h $ is a normal operator and $ \varphi\in\h $ such that $ \{T^{n}\varphi\}_{n=0}^{\infty} $ is a frame for $ \h $. Then $ \| T \|= 1 $.
\end{lemma}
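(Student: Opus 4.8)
The plan is to establish the two inequalities $\|T\|\geq 1$ and $\|T\|\leq 1$ separately. The first is the ``soft'' direction and uses neither normality nor the spectral theorem: it is exactly the content of [\cite{A3}, \textcolor{cyan}{Theorem 9}] applied to the frame $\{T^n\varphi\}_{n=0}^\infty$. If one prefers a self-contained argument, note that a frame $\{f_k\}$ for an infinite-dimensional space always satisfies $\sum_k\|f_k\|^2=\infty$, since for any orthonormal basis $\{e_j\}_{j=1}^\infty$ of $\h$ one has $\sum_k\|f_k\|^2=\sum_j\sum_k|\langle e_j,f_k\rangle|^2\geq A\sum_j 1=\infty$; on the other hand $\|T\|<1$ would force $\sum_{n=0}^\infty\|T^n\varphi\|^2\leq\|\varphi\|^2\sum_{n=0}^\infty\|T\|^{2n}<\infty$, a contradiction. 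So $\|T\|\geq 1$, and everything reduces to proving $\|T\|\leq 1$, which is where normality is used.

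For $\|T\|\leq 1$ I would argue by contradiction via the spectral theorem for the normal operator $T$. Suppose $\|T\|>1$. Since $T$ is normal, $\|T\|$ equals the spectral radius $\max_{z\in\sigma(T)}|z|$, so there is $\rho$ with $1<\rho<\|T\|$ for which the open set $V:=\{z\in\mathbb{C}:|z|>\rho\}$ meets $\sigma(T)$. Let $E$ be the spectral measure of $T$; then $P:=E(V)$ is a nonzero orthogonal projection commuting with $T$ and $T^*$, so $\h_1:=P\h$ is a nonzero reducing subspace for $T$. The restriction $T_1:=T|_{\h_1}$ is normal on $\h_1$ with $\sigma(T_1)\subseteq\{|z|\geq\rho\}$, so the functional calculus on $\h_1$ gives $\|T_1^n x\|^2=\int_V|z|^{2n}\,d\langle E(z)x,x\rangle\geq\rho^{2n}\|x\|^2$ for every $x\in\h_1$ and every $n\geq 0$.

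To close the argument I would decompose $\varphi=\varphi_1\oplus\varphi_0$ along $\h=\h_1\oplus\h_1^\perp$ with $\varphi_1=P\varphi$. Since $\h_1$ reduces $T$, the orbit splits as $T^n\varphi=T_1^n\varphi_1\oplus (T|_{\h_1^\perp})^n\varphi_0$, so $\langle T_1^m\varphi_1,T^n\varphi\rangle=\langle T_1^m\varphi_1,T_1^n\varphi_1\rangle$ for all $m,n$. If $\varphi_1\neq 0$, then taking $f:=T_1^m\varphi_1\in\h\setminus\{0\}$ in the upper frame (Bessel) inequality with bound $B$ and keeping only the $n=m$ term gives $\|T_1^m\varphi_1\|^4\leq\sum_{n=0}^\infty|\langle f,T^n\varphi\rangle|^2\leq B\|f\|^2=B\|T_1^m\varphi_1\|^2$, hence $\|T_1^m\varphi_1\|^2\leq B$ for all $m$, contradicting $\|T_1^m\varphi_1\|\geq\rho^m\|\varphi_1\|\to\infty$. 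If instead $\varphi_1=0$, then $\varphi\in\h_1^\perp$, which is also reducing, so the entire orbit $\{T^n\varphi\}_{n\geq 0}$ lies in the proper closed subspace $\h_1^\perp\subsetneq\h$ and cannot have dense span, contradicting that it is a frame for $\h$. In either case we reach a contradiction, so $\|T\|\leq 1$ and therefore $\|T\|=1$.

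The step I expect to be the main obstacle is precisely this case distinction: a priori $\varphi$ need not have any component in the ``expanding'' spectral subspace $E(V)\h$, and only in that case does the exponential-growth estimate against the Bessel bound bite; the fix is the observation that without such a component the orbit is trapped in a proper reducing subspace and hence fails to span $\h$. A secondary point to flag in the writeup is that infinite-dimensionality is essential and is used only in the direction $\|T\|\geq 1$ — on $\h=\mathbb{C}$ the normal operator $T=\tfrac12 I$ makes $\{T^n\varphi\}_{n\geq 0}$ a frame with $\|T\|=\tfrac12$.
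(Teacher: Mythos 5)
Your proof is correct, but it reaches the inequality $\|T\|\leq 1$ by a genuinely different route than the paper. The paper invokes the structure theorem of Aldroubi et al.\ (\cite{A2}, Theorem 5.7), which says that a normal operator with a frame orbit is diagonalizable as $T=\sum_j\lambda_jP_j$ with rank-one mutually orthogonal projections summing to $I$ and $|\lambda_j|<1$; from this $\|Tf\|^2=\sum_j|\lambda_j|^2\|P_jf\|^2\leq\|f\|^2$ is immediate, and $\|T\|\geq1$ comes from \cite{A3}, Theorem 9, exactly as in your first paragraph. You replace the heavy classification result by a direct spectral-measure argument: if $\|T\|>1$, the spectral projection $P=E(\{|z|>\rho\})$ with $1<\rho<\|T\|$ is nonzero and reduces $T$, and then either $P\varphi\neq0$, in which case the diagonal term $|\langle T_1^m\varphi_1,T^m\varphi\rangle|=\|T_1^m\varphi_1\|^2\geq\rho^{2m}\|\varphi_1\|^2$ eventually violates the Bessel bound, or $P\varphi=0$, in which case the orbit is trapped in the proper reducing subspace $(P\h)^\perp$ and cannot be complete. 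All steps check out (including the needed facts that $E(V)\neq0$ for an open set meeting $\sigma(T)$, and that the scalar measure of a vector in $E(V)\h$ is concentrated on $V$). What the two approaches buy: the paper's proof is a one-line computation once the strong result of \cite{A2} is granted; yours is self-contained modulo the spectral theorem and, notably, uses only that the orbit is a complete Bessel sequence for the upper bound, with the lower frame bound and infinite-dimensionality entering only in the direction $\|T\|\geq1$ (for which you also give a correct elementary alternative to citing \cite{A3}). Your closing remark that the statement fails in finite dimensions is consistent with the paper, which fixes $\h$ infinite-dimensional from the outset.
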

\begin{proof}
 Using [\cite{A2}, \textcolor{cyan}{Theorem 5.7}],  we have $T=\sum_{j=0}^{\infty}\lambda_j P_j$,
where each
$P_j$ is a rank one orthogonal projection such that $\sum_{j} P_j=I$ , $P_j P_i=0$ for all $ j\neq i$,  and $ |\lambda_j|< 1$ 
for all $j\in\mathbb{N}$.
Since $ \sum_{j} P_j=I $, we have that $\| f\|^2=\sum_{j}\|P_{j}f\|^2 $ for all $f\in\h$. Therefore 
\begin{equation*}
\begin{aligned}
\|Tf\|^2=\sum_{j}|\lambda_j|^2\|P_j f\|^2
&\leq\sum_{j}\|P_j f\|^2=\|f\|^2,\quad f\in\h.
\end{aligned}
\end{equation*}
Therefore $ \| T\|\leq 1 $. On the other hand, we have $ \| T\|\geq 1 $ by [\cite{A3}, \textcolor{cyan}{Theorem 9}], which leads to the desired result.
\end{proof}
\begin{prop}
	Let $T\in B(\h) $  and $ \varphi\in \h $ be such that $ \{T^{n}\varphi\}_{n=0}^{\infty} $ is a frame for $ \h $. 
\bei
	\item[$(i)$] There exists a countable set $\g\subset\h $ such that $ \{V^{n}\psi\}_{\psi\in \g, n\geq 0} $ is a tight frame for $\h$, where $V= \|T\|^{-1}T$.
	\item[$ (ii)$]  If $ T $ is a normal operator, then there exists a countable set $\g\subset\h $ such that $ \{(TT^{\ast})^{n}\psi\}_{\psi\in \g, n\geq 0} $ is a tight frame for $\h$.
\eni
\end{prop}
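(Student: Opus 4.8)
The plan is to reduce both parts to one construction. Suppose $W\in B(\h)$ satisfies $\|W\|\le 1$ and $(W^{\ast})^{m}\to 0$ in the strong operator topology. Then $Q:=I-WW^{\ast}\ge 0$; fixing an orthonormal basis $\{e_{k}\}_{k=1}^{\infty}$ for $\h$ and putting $\psi_{k}:=Q^{1/2}e_{k}$, one has $\sum_{k}\langle f,\psi_{k}\rangle\psi_{k}=Qf$ for all $f\in\h$, whence
\begin{equation*}
\sum_{k=1}^{\infty}\sum_{n=0}^{\infty}\big|\langle f,W^{n}\psi_{k}\rangle\big|^{2}
=\sum_{n=0}^{\infty}\big\|Q^{1/2}(W^{\ast})^{n}f\big\|^{2}
=\sum_{n=0}^{\infty}\Big(\big\|(W^{\ast})^{n}f\big\|^{2}-\big\|(W^{\ast})^{n+1}f\big\|^{2}\Big)=\|f\|^{2}
\end{equation*}
for every $f\in\h$; here the second equality uses $\langle Q(W^{\ast})^{n}f,(W^{\ast})^{n}f\rangle=\|(W^{\ast})^{n}f\|^{2}-\|(W^{\ast})^{n+1}f\|^{2}$ and the third is a telescoping sum together with $(W^{\ast})^{N}f\to 0$. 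Thus $\{W^{n}\psi_{k}\}_{k\in\mn,\,n\ge 0}$ is a Parseval (in particular tight) frame for $\h$, and one may take $\g:=\{\psi_{k}:k\in\mn,\ \psi_{k}\ne 0\}$ (cf.\ \cite{A3}).

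For $(i)$ I would apply this with $W=V=\|T\|^{-1}T$, so that $\|V\|=1$, while $\|T\|\ge 1$ by [\cite{A3}, Theorem 9]. It then remains to check that $(V^{\ast})^{m}\to 0$ strongly, and since $\|(V^{\ast})^{m}f\|=\|T\|^{-m}\|(T^{\ast})^{m}f\|\le\|(T^{\ast})^{m}f\|$, it suffices to prove $(T^{\ast})^{m}\to 0$ strongly. Let $S$ be the frame operator of $\{T^{n}\varphi\}_{n=0}^{\infty}$ and pass to the canonical Parseval frame $\{S^{-1/2}T^{n}\varphi\}_{n=0}^{\infty}=\{\widetilde T^{\,n}\widetilde\varphi\}_{n=0}^{\infty}$, where $\widetilde T:=S^{-1/2}TS^{1/2}$ and $\widetilde\varphi:=S^{-1/2}\varphi$. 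Applying $\widetilde T(\cdot)\widetilde T^{\ast}$ to the frame-operator identity $I=\sum_{n\ge 0}\langle\,\cdot\,,\widetilde T^{\,n}\widetilde\varphi\rangle\widetilde T^{\,n}\widetilde\varphi$ and reindexing gives $\widetilde T\widetilde T^{\ast}=I-\widetilde\varphi\otimes\widetilde\varphi$; the Parseval identity then forces $(\widetilde T^{\ast})^{N}f\to 0$, exactly as in the display above. Since $(\widetilde T^{\ast})^{m}=S^{1/2}(T^{\ast})^{m}S^{-1/2}$ with $S^{\pm 1/2}\in GL(\h)$, this yields $(T^{\ast})^{m}\to 0$ strongly, proving $(i)$.

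For $(ii)$: since $T$ is normal and $\{T^{n}\varphi\}_{n=0}^{\infty}$ is a frame, \textcolor{cyan}{Lemma \ref{NOR}} gives $\|T\|=1$, and its proof provides a representation $T=\sum_{j}\lambda_{j}P_{j}$ with rank-one orthogonal projections $P_{j}$, $\sum_{j}P_{j}=I$, $P_{i}P_{j}=0$ for $i\ne j$ and $|\lambda_{j}|<1$. I would take $W=TT^{\ast}=\sum_{j}|\lambda_{j}|^{2}P_{j}$; then $0\le W\le I$, so $\|W\|\le 1$ and $W^{\ast}=W$, while $\|W^{m}f\|^{2}=\sum_{j}|\lambda_{j}|^{4m}\|P_{j}f\|^{2}\to 0$ for each $f$ by dominated convergence (each term tends to $0$ and is dominated by $\|P_{j}f\|^{2}$, with $\sum_{j}\|P_{j}f\|^{2}=\|f\|^{2}$). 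Hence $(W^{\ast})^{m}=W^{m}\to 0$ strongly, and the construction of the first paragraph produces a countable $\g\subset\h$ with $\{(TT^{\ast})^{n}\psi\}_{\psi\in\g,\,n\ge 0}$ a tight frame for $\h$.

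The main obstacle is the strong-convergence condition $(W^{\ast})^{m}\to 0$. For $(i)$ this is precisely where the frame assumption on $\{T^{n}\varphi\}_{n=0}^{\infty}$ enters, through the canonical Parseval frame and the resulting Stein-type identity $\widetilde T\widetilde T^{\ast}=I-\widetilde\varphi\otimes\widetilde\varphi$; for $(ii)$ it is delivered — together with the essential equality $\|TT^{\ast}\|=\|T\|^{2}=1$ — by the spectral decomposition of a normal operator carrying a frame orbit (\textcolor{cyan}{Lemma \ref{NOR}}). Everything else is routine telescoping and transfer of the strong limit through the invertible operators $S^{\pm1/2}$.
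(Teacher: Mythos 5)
Your proposal is correct, and its overall strategy is the same as the paper's: reduce both parts to the Aldroubi--Petrosyan criterion that an operator $W$ with $\|W\|\le 1$ and $(W^{\ast})^{n}f\to 0$ for every $f$ admits a countable $\g$ whose orbit $\{W^{n}\psi\}_{\psi\in\g,\,n\ge 0}$ is a tight frame. The difference is how much you prove versus cite. The paper's proof of $(i)$ is a three-line citation argument: $\|T\|\ge 1$ and $(T^{\ast})^{n}f\to 0$ come from [\cite{A3}, Theorems 9 and 7], and the existence of $\g$ from [\cite{A3}, Theorem 8]; for $(ii)$ it uses Lemma \ref{NOR} to get $\|T\|=1$ and then the estimate $\|(TT^{\ast})^{n}f\|=\|T^{n}(T^{\ast})^{n}f\|\le\|(T^{\ast})^{n}f\|\to 0$ (normality plus the Theorem 7 convergence) before citing Theorem 8 again. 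You instead re-derive the two cited black boxes: Theorem 8 via the explicit construction $\psi_{k}=(I-WW^{\ast})^{1/2}e_{k}$ and the telescoping identity, and the Theorem 7 convergence via the canonical Parseval frame, $\widetilde T=S^{-1/2}TS^{1/2}$, and the identity $\widetilde T\widetilde T^{\ast}=I-\widetilde\varphi\otimes\widetilde\varphi$ (all steps check out, including $(\widetilde T^{\ast})^{m}=S^{1/2}(T^{\ast})^{m}S^{-1/2}$); and in $(ii)$ you bypass the paper's inequality entirely, getting $(TT^{\ast})^{m}\to 0$ strongly by dominated convergence directly from the spectral decomposition $TT^{\ast}=\sum_{j}|\lambda_{j}|^{2}P_{j}$ furnished by Lemma \ref{NOR}, so your $(ii)$ does not depend on the convergence result used in $(i)$. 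The paper's route is shorter; yours is self-contained, exhibits the generators explicitly, and shows the resulting tight frame is in fact Parseval.
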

\begin{proof}
	$ (i) $ By using of [\cite{A3}, \textcolor{cyan}{Theorems 7, 9}],  we have $ \|T\|\geq 1 $ and $ (T^{\ast})^{n}f \rightarrow 0 $ for all $ f\in \h $ as $ n\rightarrow\infty $. Since $ \|V\|=1$  and  $ (V^{\ast})^n f\rightarrow 0$ for all $ f\in \h $ as $ n\rightarrow\infty $, the result follows from [\cite{A3}, \textcolor{cyan}{Theorem 8}]. In order to prove $ (ii) $, since $ \{T^{n}\varphi\}_{n=0}^{\infty} $ is a frame and $ T $ is normal, \textcolor{cyan}{Lemma $ \ref{NOR} $} leads us to get $ \| T\|=1 $, and then $ \| TT^{\ast}\|= 1 $. On the other hand, we have  
	$\| (TT^{\ast})^n f\|=\| T^n (T^{\ast})^n f\|\leq\| T\|^n \| (T^{\ast})^n f\|=\| (T^{\ast})^n f\|\rightarrow 0,$ for all $ f\in \h $ as $ n\rightarrow\infty$.
	Therefore, the result follows from [\cite{A3}, \textcolor{cyan}{Theorem 8}].
\end{proof}
\begin{rem}
	Consider a linearly independent frame sequence $ \{f_{k}\}_{k\in \mathbb{Z}}$ in a Hilbert space $ \h $
	which spans an infinite dimensional subspace. By using [\cite{A7}, \textcolor{cyan}{Proposition 2.1}] and [\cite{A6}, \textcolor{cyan}{Proposition 2.3}], there exists a linear invertible operator $ T: {\rm span}\{f_{k}\}_{k\in \mathbb{Z}}\rightarrow  {\rm span}\{f_{k}\}_{k\in \mathbb{Z}} $ such that $ Tf_k= f_{k+1} $. However, if $ \{f_{k}\}_{k\in \mathbb{Z}}$ is a frame sequence and the operator $ T $ is bounded, it has a unique extension to a bounded operator $ \widetilde{T}:\overline{\rm span}\{f_{k}\}_{k\in \mathbb{Z}}\rightarrow\overline{\rm span}\{f_{k}\}_{k\in \mathbb{Z}} $ such that
	\begin{equation*}
	\widetilde{T}\Big(\sum_{k\in \mathbb{Z}}c_k f_k\Big)=\sum_{k\in \mathbb{Z}}c_k f_{k+1}, \quad \{c_k\}_{k\in \mathbb{Z}}\in\ell^{2}(\mathbb{Z}).
	\end{equation*}
\end{rem}
By using previous remark and operator representation of dual frames, we can construct a frame in terms of its frame operator:
\begin{prop}\label{SF0}
	Let $ \{f_{k}\}_{k\in \mathbb{Z}}= \{T^{k}f_{0}\}_{k\in \mathbb{Z}} $ be a frame for $\h$ for some bounded, invertible and self-adjoint operator $ T:\h\to\h $ with the frame operator $ S $. Assume that $ V\in B(\h)$  and 
	$\{V^{k}f_m\}_{k\in \mathbb{Z}} $ is a dual frame of $ \{f_{k}\}_{k\in \mathbb{Z}}$  for some $m\in\Bbb{Z}$. Then $ \{S^{k}f_{0}\}_{k\in \mathbb{Z}} $ is a frame for $ \h $, whenever $ T $ is an isometry. 
\end{prop}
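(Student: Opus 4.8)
The plan is to use the isometry hypothesis to force $T$ into a very rigid shape and then follow that rigidity through the duality to control the frame operator $S$. First, since $T$ is an isometry, $T^{*}T=I$; combined with $T^{*}=T$ this yields $T^{2}=I$, so $T^{-1}=T$ and $T$ is a unitary (self-adjoint) operator --- a fact I would record at the outset, since the relation $T^{*}=T^{-1}$ is used repeatedly. Next, this forces $S$ to commute with $T$: for every $f\in\h$,
\[
STf=\sum_{k\in\mathbb{Z}}\langle Tf,T^{k}f_{0}\rangle T^{k}f_{0}=\sum_{k\in\mathbb{Z}}\langle f,T^{k-1}f_{0}\rangle T^{k}f_{0}=T\sum_{k\in\mathbb{Z}}\langle f,T^{k-1}f_{0}\rangle T^{k-1}f_{0}=TSf,
\]
so $ST=TS$ and hence $S^{-1}T=TS^{-1}$. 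Consequently the canonical dual of $\{f_{k}\}_{k\in\mathbb{Z}}=\{T^{k}f_{0}\}_{k\in\mathbb{Z}}$ is $\{S^{-1}T^{k}f_{0}\}_{k\in\mathbb{Z}}=\{T^{k}(S^{-1}f_{0})\}_{k\in\mathbb{Z}}$, again an orbit under the same operator $T$.

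Then I would bring in the hypothesis that $\{V^{k}f_{m}\}_{k\in\mathbb{Z}}=\{V^{k}T^{m}f_{0}\}_{k\in\mathbb{Z}}$ is a dual frame of $\{T^{k}f_{0}\}_{k\in\mathbb{Z}}$. Using the operator-representation results for orbit frames and their duals from \cite{A6,A7,A13} (in the spirit of the Remark preceding the statement), together with the rigidity $T^{2}=I$, I expect to match this dual with the canonical one above; comparing generators then forces a relation of the form $ST^{m}f_{0}=f_{0}$, equivalently $Sf_{k}=f_{k-m}$ for all $k\in\mathbb{Z}$. Since $S$ is positive, self-adjoint and invertible, this relation pins $S$ down concretely --- so that $S$ acts on the frame elements as a power of $T$ --- and the orbit $\{S^{k}f_{0}\}_{k\in\mathbb{Z}}$ is then seen, up to reindexing, to be a frame (for instance the original frame $\{T^{k}f_{0}\}_{k\in\mathbb{Z}}$ itself); alternatively one verifies the two frame inequalities for $\{S^{k}f_{0}\}_{k\in\mathbb{Z}}$ directly and invokes Theorem \ref{TM1}.

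The hard part will be the matching step: extracting from ``$\{V^{k}f_{m}\}_{k\in\mathbb{Z}}$ is a dual frame'' its precise identification with the canonical dual $\{T^{k}(S^{-1}f_{0})\}_{k\in\mathbb{Z}}$. This is where both hypotheses on $T$ (self-adjointness and the isometry property, which together eliminate the non-uniqueness a generic overcomplete orbit frame would carry) and the shift-invariance structure of the kernel $\N_{U}$ of the synthesis operator of $\{T^{k}f_{0}\}_{k\in\mathbb{Z}}$ must be used in full. Once $S$ has been identified, the two-sided Bessel bound $\sum_{k\in\mathbb{Z}}|\langle f,S^{k}f_{0}\rangle|^{2}\le B\|f\|^{2}$ and the lower frame bound for $\{S^{k}f_{0}\}_{k\in\mathbb{Z}}$ follow routinely, closing the argument.
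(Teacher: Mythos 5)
Your preparatory observations are correct (from $T^{*}=T$ and $T^{*}T=I$ you get $T^{2}=I$, hence $T=T^{-1}$ is unitary; the commutation $ST=TS$; the canonical dual has the form $\{T^{k}S^{-1}f_{0}\}_{k\in\mathbb{Z}}$), but the decisive step of the argument is exactly the one you defer: you write ``I expect to match this dual with the canonical one'' and later call this ``the hard part,'' without carrying it out. As it stands the proposal therefore does not prove the statement. Worse, the route you announce --- identifying the given dual $\{V^{k}f_{m}\}_{k\in\mathbb{Z}}$ with the canonical dual --- is not forced by the hypotheses: an overcomplete orbit frame has many duals, and a dual of the form $\{V^{k}f_{m}\}$ need not be the canonical one, so nothing in your outline yields the relation $ST^{m}f_{0}=f_{0}$ you want to extract. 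What actually pins things down (and what the paper invokes) is the rigidity lemma \cite[Lemma 3.3]{A7}: if $\{T^{k}f_{0}\}_{k\in\mathbb{Z}}$ is a frame and a dual frame is represented as $\{V^{k}h\}_{k\in\mathbb{Z}}$ with $V$ bounded, then necessarily $V=(T^{\ast})^{-1}$ --- it is the \emph{operator} that is determined, not the generator, and no comparison with the canonical dual is needed.

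With that lemma the paper's proof is short: $V=(T^{\ast})^{-1}=T^{-1}=T$, so $g_{k}:=V^{k}f_{m}=T^{-k}T^{m}f_{0}=T^{k+m}f_{0}$ (using $T^{2}=I$); applying $T$ to the reconstruction formula $f=\sum_{k\in\mathbb{Z}}\langle f,g_{k}\rangle f_{k}$ gives
\begin{equation*}
Tf=\sum_{k\in\mathbb{Z}}\langle f,T^{k+m}f_{0}\rangle T^{k+1}f_{0}=T^{m+1}Sf,
\end{equation*}
hence $S=T^{m}$, and $\{S^{k}f_{0}\}_{k\in\mathbb{Z}}$ is then a frame as claimed. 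So your end goal ($S$ acts as a power of $T$ on the orbit) agrees with the paper's conclusion, but the bridge from the duality hypothesis to the identification of $V$ is missing in your write-up, and the specific mechanism you propose for it (canonical-dual matching plus shift-invariance of $\N_{U}$) is not a valid substitute for the cited lemma. If you want a self-contained argument, you should either prove the statement of \cite[Lemma 3.3]{A7} in this setting or cite it explicitly and then proceed as above; the remaining steps you list (frame inequalities for $\{S^{k}f_{0}\}$) are then indeed routine.
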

\begin{proof}
	We let $V^{k}f_m=g_k$ for all $k\in\Bbb{Z}$. It is clear that $Tf_k=f_{k+1}=T^{k+1}f_0$ for all $k\in\Bbb{Z}$ and $Tf= \sum_{k\in\mathbb{Z}}\langle f, g_k\rangle f_{k+1}$ for all $f\in\h$.
	On the other hand, by [\cite{A7}, \textcolor{cyan}{Lemma 3.3}],  $ V= (T^{\ast})^{-1} $. Since $ T $ is self-adjoint, we have
	$Tf =\sum_{k\in \mathbb{Z}} \langle f, T^{-k}f_m\rangle T^{k+1}f_{0},$ for all $f\in\h$.
	If $ T $ is an isometry, i.e., $ T^{\ast}T=I $,  then $ T=T^{-1} $, and therefore 
	we get
	\begin{equation*}
	Tf=\sum_{k\in \mathbb{Z}} \langle f, T^{k+m}f_{0}\rangle T^{k+1}f_{0}=T^{m+1}\sum_{k\in \mathbb{Z}} \langle f, T^{k}f_{0}\rangle T^{k}f_{0}=T^{m+1}Sf,
	\end{equation*}
	for all $f\in\h$.
	Hence, $T^m=S$, and we infer  that $ \{S^{k}f_{0}\}_{k\in \mathbb{Z}} $ is a frame for $ {\h} $.
\end{proof}
    It can be an interesting question whether the converse of \textcolor{cyan}{Proposition $ \ref{SF0} $} holds.
	We know that if $ \{S^{k}f_{0}\}_{k\in \mathbb{Z}} $ is a tight frame for $ \h $, [\cite{A7}, \textcolor{cyan}{Corollary 2.7}] shows that the frame operator $ S $ is an isometry. It is still an open question  whether $ T $ is an isometry or not.
\par
 Suppose that $ T $ is a bounded bijective operator on $\h$, and $ f_0\in\h $ such that $ \{T^n f_0\}_{n\in\mathbb{Z}} $ is a frame for $ \h $. We get that $ TST^{\ast}=S $, where $ S $ is the frame operator for $ \{T^n f_0\}_{n\in\mathbb{Z}} $. Indeed, 
\begin{equation*}
TST^{\ast}f=\sum_{n\in\mathbb{Z}}\langle T^{\ast}f, T^n f_0\rangle T^{n+1}f_0=\sum_{n\in\mathbb{Z}}\langle f, T^{n+1}f_0\rangle T^{n+1}f_0=Sf
\end{equation*}
In particular, $T$ is similar to a unitary operator. 
   \begin{prop}
	Let $ T\in GL(\h) $  and $ \varphi\in\h $ such that $ \{T^n \varphi\}_{n\in\mathbb{Z}} $ is a frame for $ \h $ with frame bounds $ A, B $ and frame operator $ S $. Let  $ U:=S^{-{1/2}}TS^{1/2} $ and $ \psi=S^{-1/2} \varphi $. Then $ \{U^n \psi\}_{n\in\mathbb{Z}} $ is a frame for $\h$ with bounds $ AB^{-1} $ and $ BA^{-1}.$ 
	\end{prop}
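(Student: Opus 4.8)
The plan is to reduce the frame inequality for $\{U^n\psi\}_{n\in\mathbb{Z}}$ to the one already assumed for $\{T^n\varphi\}_{n\in\mathbb{Z}}$ by exploiting the similarity built into the definition of $U$. First I would note that $U=S^{-1/2}TS^{1/2}$ is invertible, being a composition of operators in $GL(\h)$ (recall $S$ is positive and invertible, so $S^{\pm 1/2}\in GL(\h)$, and $T\in GL(\h)$ by hypothesis), and that the inner factors telescope: a routine induction gives $U^n=S^{-1/2}T^nS^{1/2}$ for every $n\in\mathbb{Z}$ (for negative powers use $U^{-1}=S^{-1/2}T^{-1}S^{1/2}$ and iterate). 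Hence $U^n\psi=S^{-1/2}T^nS^{1/2}S^{-1/2}\varphi=S^{-1/2}T^n\varphi$, and since $S^{-1/2}$ is self-adjoint,
\[
\sum_{n\in\mathbb{Z}}\bigl|\langle f,U^n\psi\rangle\bigr|^2=\sum_{n\in\mathbb{Z}}\bigl|\langle S^{-1/2}f,T^n\varphi\rangle\bigr|^2,\qquad f\in\h.
\]

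Next I would apply the frame inequality for $\{T^n\varphi\}_{n\in\mathbb{Z}}$ to the vector $S^{-1/2}f$, obtaining
\[
A\,\|S^{-1/2}f\|^2\le\sum_{n\in\mathbb{Z}}\bigl|\langle f,U^n\psi\rangle\bigr|^2\le B\,\|S^{-1/2}f\|^2,\qquad f\in\h.
\]
It then remains only to control $\|S^{-1/2}f\|^2=\langle S^{-1}f,f\rangle$ in terms of $\|f\|^2$. Since $S$ is the frame operator of a frame with bounds $A$ and $B$, one has $A\,I\le S\le B\,I$, so by the functional calculus $B^{-1}I\le S^{-1}\le A^{-1}I$, whence $B^{-1}\|f\|^2\le\|S^{-1/2}f\|^2\le A^{-1}\|f\|^2$ for all $f\in\h$. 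Combining this with the previous display yields
\[
AB^{-1}\|f\|^2\le\sum_{n\in\mathbb{Z}}\bigl|\langle f,U^n\psi\rangle\bigr|^2\le BA^{-1}\|f\|^2,\qquad f\in\h,
\]
which is exactly the claimed statement.

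I do not anticipate a genuine obstacle: the argument is entirely elementary once the telescoping identity $U^n=S^{-1/2}T^nS^{1/2}$ is recorded. The only point deserving a word of care is the step from $A\,I\le S\le B\,I$ to the two-sided estimate on $\|S^{-1/2}f\|$, which rests on the standard fact that inversion reverses the operator order on positive invertible operators; this can either be cited or checked in one line via the spectral theorem. It is worth remarking that the resulting bounds $AB^{-1}$ and $BA^{-1}$ are in general not optimal, but optimality is not asserted.
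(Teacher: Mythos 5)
Your argument is correct and follows essentially the same route as the paper: both use the identity $U^n=S^{-1/2}T^nS^{1/2}$, rewrite $\langle f,U^n\psi\rangle=\langle S^{-1/2}f,T^n\varphi\rangle$ via self-adjointness of $S^{-1/2}$, apply the frame inequality for $\{T^n\varphi\}_{n\in\mathbb{Z}}$ at $S^{-1/2}f$, and control $\|S^{-1/2}f\|^2$ by $B^{-1}\|f\|^2\leq\|S^{-1/2}f\|^2\leq A^{-1}\|f\|^2$ coming from $A\,I\leq S\leq B\,I$. The only cosmetic difference is that the paper additionally remarks that $U$ is unitary (via $TST^{\ast}=S$), which is not needed for the bound computation.
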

\begin{proof}
It is clear that $TST^{\ast}=S$ and $U$ is unitary (see [\cite{A116}, \textcolor{cyan}{Lemma 4.4}]). Since $U^n=S^{-{1/2}}T^nS^{1/2}$ for all $n\in\Bbb{Z}$, we have $\sum_{n\in\mathbb{Z}}|\langle f, U^n \psi\rangle|^2=
\sum_{n\in\mathbb{Z}}|\langle S^{-1/2}f, T^n \varphi\rangle|^2$. Then
\begin{equation*}
	\dfrac{A}{B}\|f\|^2\leq A\|S^{-1/2} f\|^2\leq\sum_{n\in\mathbb{Z}}|\langle f, U^n \psi\rangle|^2\leq B\|S^{-1/2}f\|^2\leq \dfrac{B}{A}\|f\|^2,\quad f\in\h.
	\end{equation*}
\end{proof}
As a minor modification in [\cite{A116}, \textcolor{cyan}{Corollary 4.5}], we also obtain the following result:
   \begin{prop}
	Let $ T\in GL(\h) $  and $ \varphi\in\h $ such that $ \{T^n \varphi\}_{n\in\mathbb{Z}} $ is a frame for $ \h $ with frame bounds $ A, B $.  Then 
\[\sqrt{\dfrac{A}{B}}\|f\|\leq\|T^nf\|\leq\sqrt{\dfrac{B}{A}}\|f\|,\quad \sqrt{\dfrac{A}{B}}\|f\|\leq\|(T^{\ast})^nf\|\leq\sqrt{\dfrac{B}{A}}\|f\|,\quad n\in\Bbb{Z},~f\in\h.\]
In particular, if $ \{T^n \varphi\}_{n\in\mathbb{Z}} $ is a tight frame, then $T^n$ and $(T^{\ast})^n$ are isometric for all $n\in\Bbb{Z}$.
	\end{prop}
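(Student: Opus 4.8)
The plan is to reduce the whole statement to the identity $TST^{\ast}=S$, established just before this proposition, together with the observation (used in the preceding proof via [\cite{A116}, Lemma~4.4]) that the operator $U:=S^{-1/2}TS^{1/2}$ is unitary. First I would note that $S^{1/2}$ and $S^{-1/2}$ are bounded, self-adjoint and invertible, so for every $n\in\mathbb{Z}$ (here $T\in GL(\h)$ is what makes $T^{n}$ meaningful) one has $U^{n}=S^{-1/2}T^{n}S^{1/2}$, hence $T^{n}=S^{1/2}U^{n}S^{-1/2}$; taking adjoints, $(U^{n})^{\ast}=S^{1/2}(T^{\ast})^{n}S^{-1/2}$ is again unitary, so $(T^{\ast})^{n}=S^{-1/2}(U^{n})^{\ast}S^{1/2}$. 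Thus both $T^{n}$ and $(T^{\ast})^{n}$ are conjugates of a unitary operator by the invertible pair $S^{\pm1/2}$.

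Next I would record the elementary estimates coming from $A\,I\leq S\leq B\,I$: applying the operator-monotone square root gives $\sqrt{A}\,\|g\|\leq\|S^{1/2}g\|\leq\sqrt{B}\,\|g\|$, and applying it to $S^{-1}$ gives $\tfrac{1}{\sqrt{B}}\,\|g\|\leq\|S^{-1/2}g\|\leq\tfrac{1}{\sqrt{A}}\,\|g\|$, for all $g\in\h$. Combining these with the norm-preservation of $U^{n}$ and of $(U^{n})^{\ast}$, for $f\in\h$ I would estimate
\[
\|T^{n}f\|=\|S^{1/2}U^{n}S^{-1/2}f\|\leq\sqrt{B}\,\|U^{n}S^{-1/2}f\|=\sqrt{B}\,\|S^{-1/2}f\|\leq\sqrt{\tfrac{B}{A}}\,\|f\|,
\]
and symmetrically $\|T^{n}f\|\geq\sqrt{A}\,\|S^{-1/2}f\|\geq\sqrt{A/B}\,\|f\|$; the same computation with $U^{n}$ replaced by $(U^{n})^{\ast}$ and the roles of $S^{1/2}$ and $S^{-1/2}$ interchanged gives the identical two-sided bound for $\|(T^{\ast})^{n}f\|$. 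For the final assertion one simply sets $A=B$: then $\sqrt{A/B}=\sqrt{B/A}=1$, so $\|T^{n}f\|=\|(T^{\ast})^{n}f\|=\|f\|$ for all $f\in\h$ and all $n\in\mathbb{Z}$, i.e. $T^{n}$ and $(T^{\ast})^{n}$ are isometries.

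I do not anticipate a genuine obstacle; the only point needing a moment's care is confirming the unitarity of $U$, i.e. that $TST^{\ast}=S$ is being used in the form $U^{\ast}U=S^{1/2}T^{\ast}S^{-1}TS^{1/2}=I$ and $UU^{\ast}=S^{-1/2}TST^{\ast}S^{-1/2}=I$, the first of which requires the inverted relation $T^{\ast}S^{-1}T=S^{-1}$. Once this is in place, everything else is routine manipulation of the bounds; if one prefers to avoid citing [\cite{A116}, Lemma~4.4], I would simply insert this two-line verification directly.
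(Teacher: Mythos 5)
Your proposal is correct and follows essentially the same route as the paper: both conjugate by $S^{\pm 1/2}$ to the unitary $U=S^{-1/2}TS^{1/2}$ (whose unitarity rests on $TST^{\ast}=S$) and then combine $\|U^{n}g\|=\|g\|$ with the spectral bounds $\sqrt{A}\,\|g\|\leq\|S^{1/2}g\|\leq\sqrt{B}\,\|g\|$ and $\tfrac{1}{\sqrt{B}}\,\|g\|\leq\|S^{-1/2}g\|\leq\tfrac{1}{\sqrt{A}}\,\|g\|$. You merely spell out the adjoint case and the tight-frame conclusion that the paper leaves as "a similar calculation," so no further comment is needed.
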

\begin{proof}
Let $S$ denote the frame operator of
$ \{T^n \varphi\}_{n\in\mathbb{Z}}$ and let $ U:=S^{-{1/2}}TS^{1/2} $. Since $T$ is invertible, we infer that
 $U$ is unitary. Hence, for $f\in\h$ and $n\in\Bbb{Z}$ we have 
\[\dfrac{1}{\sqrt{B}}\|f\|\leq\|U^nS^{-{1/2}}f\|\leq\dfrac{1}{\sqrt{A}}\|f\|.\]
Therefore 
\[\sqrt{\dfrac{A}{B}}\|f\|\leq\|S^{1/2}U^nS^{-{1/2}}f\|=\|T^n f\|=\|S^{1/2}U^nS^{-{1/2}}f\|\leq\sqrt{\dfrac{B}{A}}\|f\|.\]A similar calculation applies to $\|(T^{\ast})^nf\|.$
\end{proof}
\par
Let $ T\in GL(\h) $. Similarly as in \cite{A116}, we define the set
\begin{center}
	$\V_{\Bbb{Z}}(T)$:=\Big\{$f\in\h:~\{T^n f\}_{n\in\mathbb{Z}}$ is a frame for $ \h $\Big\}.
\end{center}
\textcolor{cyan}{Proposition 4.11} of \cite{A116} shows that from one vector $\varphi\in\V_{\Bbb{Z}}(T)$ (if it exists) we obtain all vectors in $\V_{\Bbb{Z}}(T)$. Indeed, 
$\V_{\Bbb{Z}}(T)=\Big\{V\varphi:~V \in GL(\h)~\mbox{and}~  VT=TV\Big\}.$
\begin{prop}
 Assume that $T\in GL(\h)$, $\varphi\in\V_{\Bbb{Z}}(T) $ and $ V $ is a unitary operator such that $ VT=TV $.  
 Let  $S$ and $ \widetilde{S} $ be the frame operators for
 $\{T^n \varphi\}_{n\in\mathbb{Z}}$ and $\{T^n V\varphi\}_{n\in\mathbb{Z}}$, respectively. Then $ \{(\widetilde{S})^n f\}_{n\in\mathbb{Z}} $ is a frame for $ \h $ if and only if $ \{S^n V^{\ast}f\} $ is a frame for $ \h $. In other words,
 $f\in\V_{\Bbb{Z}}(\widetilde{S})$ if and only if $V^{\ast}f\in\V_{\Bbb{Z}}(S).$
\end{prop}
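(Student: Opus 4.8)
The plan is to show that the two frame operators are conjugate via $V$, namely $\widetilde{S}=VSV^{\ast}$, and then to transport the frame inequalities across the unitary $V$.

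First I would record that the commutation relation $VT=TV$, together with the invertibility of $T$, propagates to all integer powers: $VT^{n}=T^{n}V$ for every $n\in\mz$. For $n\geq 0$ this is immediate by induction; for $n<0$ one rewrites $VT=TV$ as $T^{-1}V=VT^{-1}$ and inducts again. Consequently $T^{n}V\varphi=VT^{n}\varphi$ for all $n$, and the definition of the frame operator of $\{T^{n}V\varphi\}_{n\in\mz}$ gives, for every $f\in\h$,
\[
\widetilde{S}f=\sum_{n\in\mz}\langle f,T^{n}V\varphi\rangle\,T^{n}V\varphi
=\sum_{n\in\mz}\langle V^{\ast}f,T^{n}\varphi\rangle\,VT^{n}\varphi
=V\Big(\sum_{n\in\mz}\langle V^{\ast}f,T^{n}\varphi\rangle\,T^{n}\varphi\Big)=VSV^{\ast}f ,
\]
so $\widetilde{S}=VSV^{\ast}$. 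Since $V$ is unitary and $S$ is positive and invertible (being a frame operator), $\widetilde{S}$ is positive and invertible as well, so $\{(\widetilde{S})^{n}f\}_{n\in\mz}$ is meaningful, and $(\widetilde{S})^{n}=VS^{n}V^{\ast}$ holds for all $n\in\mz$; for $n<0$ this uses $(VSV^{\ast})^{-1}=VS^{-1}V^{\ast}$, which is valid because $V^{-1}=V^{\ast}$.

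With this identity in hand the equivalence is just a change of variable. For any $f,g\in\h$ one has
\[
\sum_{n\in\mz}\big|\langle g,(\widetilde{S})^{n}f\rangle\big|^{2}
=\sum_{n\in\mz}\big|\langle g,VS^{n}V^{\ast}f\rangle\big|^{2}
=\sum_{n\in\mz}\big|\langle V^{\ast}g,S^{n}V^{\ast}f\rangle\big|^{2}.
\]
Because $g\mapsto V^{\ast}g$ is a norm-preserving bijection of $\h$, the inequalities $A\|g\|^{2}\leq\sum_{n}|\langle g,(\widetilde{S})^{n}f\rangle|^{2}\leq B\|g\|^{2}$ for all $g\in\h$ hold if and only if $A\|h\|^{2}\leq\sum_{n}|\langle h,S^{n}V^{\ast}f\rangle|^{2}\leq B\|h\|^{2}$ for all $h\in\h$. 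Hence $\{(\widetilde{S})^{n}f\}_{n\in\mz}$ is a frame for $\h$ (with the same bounds) if and only if $\{S^{n}V^{\ast}f\}_{n\in\mz}$ is; equivalently, $f\in\V_{\Bbb{Z}}(\widetilde{S})$ if and only if $V^{\ast}f\in\V_{\Bbb{Z}}(S)$.

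There is no serious obstacle here: the entire content is the identity $\widetilde{S}=VSV^{\ast}$, and the only points requiring a little care are the propagation of the commutation relation to negative powers of $T$ and the fact that $\widetilde{S}$ is genuinely invertible (which is automatic since it is a frame operator, so that negative powers $(\widetilde{S})^{n}$ are well defined). One could alternatively phrase the argument through the synthesis operators on $\ell^{2}(\mz)$, but the direct computation above is cleaner.
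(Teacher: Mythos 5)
Your proof is correct and follows essentially the same route as the paper: both establish the conjugation identity $\widetilde{S}=VSV^{\ast}$ by the same computation and then deduce $(\widetilde{S})^{n}=VS^{n}V^{\ast}$ to transfer the frame property across the unitary $V$. Your write-up simply makes explicit a few details the paper leaves implicit (commutation with negative powers of $T$ and the pointwise transfer of the frame inequalities), which is fine.
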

\begin{proof}
For each $f\in\h$, we have
 \begin{equation*}
 \begin{aligned}
 \widetilde{S}f=\sum_{n\in\mathbb{Z}}\langle f, T^n V\varphi\rangle T^n V\varphi=\sum_{n\in\mathbb{Z}}\langle f,V T^n \varphi\rangle VT^n \varphi
 =V\sum_{n\in\mathbb{Z}}\langle V^{\ast}f, T^n \varphi\rangle T^n\varphi= VSV^{\ast}f.
 \end{aligned}
 \end{equation*}
 As $ V $ is unitary, we get $ (\widetilde{S})^n=VS^n V^{\ast} $ and  $V^{\ast} (\widetilde{S})^n=S^n V^{\ast}$ which immediately yields the desired conclusion.
\end{proof}
\section{Frame representation of the form  $ \{a_n T^n \varphi\}_{n=0}^{\infty} $}
In this section, we generalize some results in the recent papers \cite{A6, A117} which have been proved by Christensen et al.  We consider frames of the form
$\{f_k\}_{k=1}^{\infty}=\{a_n T^nf_1\}_{n=0}^{\infty}$ for some scalars $ a_n\neq 0 $ with $ \sup_n\Big|\dfrac{a_n}{a_{n+1}}\Big|<\infty $ and a bounded linear operator $ T:  {\rm span}\{f_k\}_{k=1}^{\infty}\rightarrow\h $. Using \cite{A117}, we define $ \T_{\omega}:\ell^{2}({\mathbb{N}}_{0})\rightarrow\ell^{2}({\mathbb{N}}_{0}) $ by $\T_{\omega}\{c_i\}_{i=0}^{\infty} =\Big(0,\frac{a_0}{a_1}c_0, \frac{a_1}{a_2}c_1,\cdots\Big).$
The following theorem was proved in \cite{A117}:
\begin{thm}\label{EM2}
	Let  $ \{a_n\}_{n=0}^{\infty} $ be  a sequence of non-zero scalars  with $ \sup_n \Big|\dfrac{a_n}{a_{n+1}}\Big|<\infty $, and let $ \{f_k\}_{k=1}^{\infty}=\{a_n T^n f_1\}_{n=0}^{\infty} $   be a linearly independent frame for an infinite-dimensional Hilbert space $ \h $, where  $ T:  {\rm span}\{f_k\}_{k=1}^{\infty}\rightarrow\h $ is a linear operator. Then $T$ is bounded if and only if $ {\N}_U $ is invariant under $ \T_{\omega} $.
\end{thm}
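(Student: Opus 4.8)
The plan is to imitate, with the weighted shift $\T_{\omega}$ in place of the ordinary right-shift, the boundedness criterion for representations $\{T^{k}f_{1}\}_{k=0}^{\infty}$ recalled in item (ii) of the introduction. Everything rests on one intertwining identity. Write $\{e_{n}\}_{n=0}^{\infty}$ for the standard basis of $\ell^{2}(\mathbb{N}_{0})$, so that $Ue_{n}=f_{n+1}=a_{n}T^{n}f_{1}$ and $\T_{\omega}e_{n}=\tfrac{a_{n}}{a_{n+1}}e_{n+1}$; observe that the hypothesis $\sup_{n}|a_{n}/a_{n+1}|<\infty$ is precisely what makes $\T_{\omega}$ a bounded operator on $\ell^{2}(\mathbb{N}_{0})$, and that the linear independence of $\{f_{k}\}_{k=1}^{\infty}$ is what guarantees a linear $T$ with $f_{n+1}=a_{n}T^{n}f_{1}$ is well defined on $\Span\{f_{k}\}_{k=1}^{\infty}$. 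For every finitely supported $c=\{c_{n}\}$, using $f_{n+1}=a_{n}T^{n}f_{1}$ and linearity of $T$,
\[
TUc=\sum_{n}c_{n}Tf_{n+1}=\sum_{n}c_{n}\,a_{n}T^{n+1}f_{1}=\sum_{n}c_{n}\,\tfrac{a_{n}}{a_{n+1}}f_{n+2}=U\T_{\omega}c .
\]

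\emph{If $T$ is bounded, then $\N_{U}$ is invariant under $\T_{\omega}$.} Let $\widehat{T}\in B(\h)$ be the (unique bounded) extension of $T$ to $\h=\span\{f_{k}\}_{k=1}^{\infty}$. Then $\widehat{T}U$ and $U\T_{\omega}$ are bounded operators from $\ell^{2}(\mathbb{N}_{0})$ to $\h$ which coincide on the dense subspace of finitely supported sequences, by the identity above together with $\widehat{T}Uc=TUc$ for such $c$; hence $\widehat{T}U=U\T_{\omega}$ on all of $\ell^{2}(\mathbb{N}_{0})$. In particular, for $c\in\N_{U}$ we get $U\T_{\omega}c=\widehat{T}Uc=0$, that is, $\T_{\omega}c\in\N_{U}$.

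\emph{Conversely, if $\N_{U}$ is invariant under $\T_{\omega}$, then $T$ is bounded.} Since $\{f_{k}\}_{k=1}^{\infty}$ is a frame, $U$ is surjective by \textcolor{cyan}{Theorem \ref{TM1}}, and its restriction to $\N_{U}^{\perp}$ is a bounded bijection onto $\h$; by the open mapping theorem $W:=\big(U|_{\N_{U}^{\perp}}\big)^{-1}\colon\h\to\ell^{2}(\mathbb{N}_{0})$ is bounded, so $\widetilde{T}:=U\T_{\omega}W\in B(\h)$. For each $n$ write $e_{n}=e_{n}'+e_{n}''$ with $e_{n}'\in\N_{U}^{\perp}$ and $e_{n}''\in\N_{U}$; then $Ue_{n}'=Ue_{n}=f_{n+1}$ gives $Wf_{n+1}=e_{n}'$, and since $\N_{U}$ is $\T_{\omega}$-invariant we have $U\T_{\omega}e_{n}''=0$, whence
\[
\widetilde{T}f_{n+1}=U\T_{\omega}e_{n}'=U\T_{\omega}e_{n}-U\T_{\omega}e_{n}''=\tfrac{a_{n}}{a_{n+1}}Ue_{n+1}=\tfrac{a_{n}}{a_{n+1}}f_{n+2}=a_{n}T^{n+1}f_{1}=Tf_{n+1}.
\]
Thus $\widetilde{T}$ agrees with $T$ on every $f_{k}$, hence on $\Span\{f_{k}\}_{k=1}^{\infty}$ by linearity, and since $\widetilde{T}\in B(\h)$ the operator $T$ is bounded.

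The intertwining computation and the bookkeeping with the weights $a_{n}/a_{n+1}$ are routine. The one structural step is the converse: the open mapping theorem supplies a \emph{bounded} right inverse $W$ of $U$, and the invariance of $\N_{U}$ under $\T_{\omega}$ is exactly what is needed so that, when $\T_{\omega}$ is transported to $\h$ through $U$, the ambiguity living in $\N_{U}$ disappears and the resulting bounded operator is forced to coincide with $T$ on the frame vectors. The main point to watch is that $\widetilde{T}$ must be shown to be a genuine \emph{extension} of $T$, so the identity $\widetilde{T}f_{k}=Tf_{k}$ has to be checked for every $k$ and then propagated to $\Span\{f_{k}\}_{k=1}^{\infty}$ by linearity.
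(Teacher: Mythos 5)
Your proof is correct. Note that the paper itself does not prove Theorem \ref{EM2}: it is quoted from \cite{A117}, so there is no internal proof to compare against; your argument is essentially the standard one from that source (and from the unweighted case in \cite{A7,A13}), namely the intertwining identity $TU=U\T_{\omega}$ on finitely supported sequences for the forward implication, and the bounded pseudo-inverse of the surjective synthesis operator combined with $\T_{\omega}$-invariance of $\N_U$ for the converse. One cosmetic remark: linear independence is not what makes $T$ well defined in your argument (the operator $T$ is given in the hypothesis, and your proof never uses independence); its role in \cite{A117} is rather to guarantee that such a representing operator exists in the first place.
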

The condition $\sup_n \Big|\dfrac{a_n}{a_{n+1}}\Big|<\infty$ is indeed necessary for frames of the form $ \{ a_n T^n \varphi\}_{n=0}^{\infty}$ when $T\in B(\h) $.
\begin{prop}
	Assume that $ T\in B(\h) $  such that $ \{ a_n T^n \varphi\}_{n=0}^{\infty} $ is a frame for some $ \varphi\in\h $ and some non-zero scalars $ \{a_n\}_{n=0}^{\infty} $.  Then $\sup_n \Big|\dfrac{a_n}{a_{n+1}}\Big|<\infty.$
\end{prop}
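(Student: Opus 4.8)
The approach I would take is to exploit the way the synthesis operator of an iterated system intertwines $T$ with a weighted shift. Since $\{a_nT^n\varphi\}_{n=0}^\infty$ is a frame, it is in particular a Bessel sequence, so by \textcolor{cyan}{Theorem~\ref{TM1}} its synthesis operator
\[
U:\ell^{2}(\mathbb{N}_{0})\to\h,\qquad U\{c_n\}_{n=0}^\infty=\sum_{n=0}^\infty c_na_nT^n\varphi,
\]
is well defined and bounded, and being a frame forces $U$ to be surjective. Let $S=UU^{\ast}$ be the frame operator and $U^{\dagger}=U^{\ast}S^{-1}$ the (bounded) pseudo-inverse, so that $UU^{\dagger}=I_{\h}$ and $P:=U^{\dagger}U$ is the orthogonal projection of $\ell^{2}(\mathbb{N}_{0})$ onto $\N_{U}^{\perp}$. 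With $\T_{\omega}$ as defined before \textcolor{cyan}{Theorem~\ref{EM2}}, a one-line reindexing yields the intertwining identity
\[
U\T_{\omega}\{c_n\}=\sum_{n=1}^{\infty}\frac{a_{n-1}}{a_n}\,c_{n-1}\,a_nT^n\varphi=\sum_{n=1}^{\infty}a_{n-1}c_{n-1}T^n\varphi=T\sum_{m=0}^{\infty}a_mc_mT^m\varphi=TU\{c_n\},
\]
that is, $U\T_{\omega}=TU$.

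The next step is the observation that the conclusion $\sup_{n}\bigl|\tfrac{a_n}{a_{n+1}}\bigr|<\infty$ is exactly the assertion that the weighted shift $\T_{\omega}$ is bounded on $\ell^{2}(\mathbb{N}_{0})$: indeed $\T_{\omega}$ is always a closed operator, so by the closed graph theorem it is bounded precisely when its natural domain is all of $\ell^{2}(\mathbb{N}_{0})$, which happens precisely when its weight sequence $\{a_n/a_{n+1}\}$ is bounded. It therefore suffices to extract boundedness of $\T_{\omega}$ from $U\T_{\omega}=TU$. Composing with $U^{\dagger}$ on the left and splitting along $\ell^{2}(\mathbb{N}_{0})=\N_{U}^{\perp}\oplus\N_{U}$, one gets, for every $c$ in the domain of $\T_{\omega}$,
\[
\T_{\omega}c=U^{\dagger}U\T_{\omega}c+(I-P)\T_{\omega}c=U^{\dagger}TUc+(I-P)\T_{\omega}c,
\]
and the first summand causes no trouble: $\|U^{\dagger}TUc\|\le\|U^{\dagger}\|\,\|T\|\,\|U\|\,\|c\|$.

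The main obstacle is the second summand $(I-P)\T_{\omega}c$, i.e.\ the component of $\T_{\omega}c$ lying in the kernel $\N_{U}$ of $U$. Since $c\in\N_{U}$ forces $U\T_{\omega}c=TUc=0$, the subspace $\N_{U}$ is invariant under $\T_{\omega}$, and the whole matter reduces to bounding $\T_{\omega}$ on $\N_{U}$. When $\{a_nT^n\varphi\}_{n=0}^\infty$ is a Riesz basis this difficulty is vacuous, for then $\N_{U}=\{0\}$ and the computation above already gives $\|\T_{\omega}c\|\le\|U^{-1}\|\,\|T\|\,\|U\|\,\|c\|$ for all $c$, which completes the proof in that case. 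For a genuinely overcomplete orbit frame, controlling $\T_{\omega}|_{\N_{U}}$ is the crux, and this is exactly where the finer structure of the kernel of the synthesis operator of an iterated system must enter — the same shift-invariance phenomenon that underlies \textcolor{cyan}{Theorem~\ref{EM2}}. I expect this to be the decisive step; if it cannot be pushed through, the natural next move is to probe the statement on overcomplete frames $\{a_nT^n\varphi\}_{n=0}^{\infty}$ for which $\|a_nT^n\varphi\|\to0$ along a sparse set of indices.
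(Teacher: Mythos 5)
Your proposal does not prove the proposition: it closes only the case $\N_U=\{0\}$, i.e.\ when $\{a_nT^n\varphi\}_{n=0}^\infty$ is a Riesz basis, and you acknowledge that the remaining step --- controlling $(I-P)\T_{\omega}c$, equivalently bounding $\T_{\omega}$ on the invariant subspace $\N_U$ --- is left open. That remaining case is precisely the content of the statement for overcomplete orbit frames, so this is a genuine gap, not a routine verification. Note also that you cannot hope to import the machinery around Theorem~\ref{EM2} to fill it: both the use of $\T_{\omega}$ as a bounded weighted shift and the hypotheses of that theorem already presuppose $\sup_n|a_n/a_{n+1}|<\infty$, which is exactly what is to be proved, so any such appeal would be circular. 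The paper's own proof is much more elementary and bypasses the synthesis operator entirely: writing $f_k=a_{k-1}T^{k-1}\varphi$, one has $Tf_k=\frac{a_{k-1}}{a_k}f_{k+1}$, hence $\|T\|\,\|f_k\|\geq\big|\frac{a_{k-1}}{a_k}\big|\,\|f_{k+1}\|$, and invoking the bounds $\sqrt{A}\leq\|f_j\|\leq\sqrt{B}$ on the norms of the frame elements gives $\sup_n\big|\frac{a_n}{a_{n+1}}\big|\leq\sqrt{B/A}\,\|T\|$ in two lines.

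It is worth seeing that your obstruction and the paper's key inequality are the same quantity in disguise. Testing your identity $U\T_{\omega}=TU$ on the canonical vector $\delta_n$ gives $\frac{a_n}{a_{n+1}}P\delta_{n+1}=U^{\dagger}TU\delta_n$, hence $\big|\frac{a_n}{a_{n+1}}\big|\,\|P\delta_{n+1}\|\leq\|U^{\dagger}\|\,\|T\|\,\sqrt{B}$; and since $U\delta_{n+1}=f_{n+2}$ one has $\|f_{n+2}\|/\|U\|\leq\|P\delta_{n+1}\|\leq\|U^{\dagger}\|\,\|f_{n+2}\|$. So what your route actually needs is not a bound for $\T_{\omega}$ on all of $\N_U$ but simply a uniform lower bound $\inf_k\|a_kT^k\varphi\|>0$ --- which is exactly the inequality $\sqrt{A}\leq\|f_k\|$ that the paper asserts. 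Your closing remark about overcomplete frames with $\|a_nT^n\varphi\|\to0$ along a sparse set of indices is therefore aimed at the right pressure point: a lower norm bound on frame elements is not automatic for general frames (it holds, e.g., when no element lies in the closed span of the others), so the concrete missing piece in your argument --- and the step you should either prove or refute for orbit frames of this form --- is $\inf_k\|a_kT^k\varphi\|>0$. With that in hand, either your intertwining computation or the paper's direct estimate finishes immediately.
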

\begin{proof}
Let $A$ and $B$ be frames bounds of $ \{f_k\}_{k=1}^{\infty}=\{ a_n T^n \varphi\}_{n=0}^{\infty}.$  Using that $\sqrt{A}\leq\|f_k\|\leq\sqrt{B}$ for all $k\in\Bbb{N}$, we get
\[\|f_k\|\|T\|\geq\|Tf_k\|=\Big\|\dfrac{a_{k-1}}{a_k}f_{k+1}\Big\|\geq\Big|\dfrac{a_{k-1}}{a_k}\Big|\sqrt{A}\geq\Big|\dfrac{a_{k-1}}{a_k}\Big|\sqrt{\dfrac{A}{B}}\|f_k\|.\] Then  $\sup_n \Big|\dfrac{a_n}{a_{n+1}}\Big|\leq \sqrt{\dfrac{B}{A}}\|T\|$ as desired.
\end{proof}
\par
If  $ T:\h\rightarrow\h $ is a  linear operator and $\{f_k\}_{k=1}^\infty= \{ a_n T^n \varphi\}_{n=0}^{\infty} $ is a frame (with frame bounds $A$ and $B$) for some $\varphi\in\h $ and  some non-zero scalars $ \{a_n\}_{n=0}^{\infty} $ with $\sup_n \Big|\dfrac{a_n}{a_{n+1}}\Big|<\infty$, then we have
\[\|Tf_k\|=\Big\|\dfrac{a_{k-1}}{a_k}f_{k+1}\Big\|\leq\Big|\dfrac{a_{k-1}}{a_k}\Big|\sqrt{B}\leq\sqrt{\dfrac{B}{A}}\|f_k\|,\quad k\in\Bbb{N}.\] 
In this case $T$ may be unbouded (see \textcolor{cyan} {Proposition} \ref{Tunbounded}).
Using [\cite{A6}, \textcolor{cyan}{Proposition 2.5}], we can obtain the following result  for a frame in the form $ \{a_n T^n \varphi\}_{n=0}^{\infty} $.
\begin{prop}
	Assume that $ T\in B(\h) $ such that $\{ a_n T^n \varphi\}_{n=0}^{\infty} $ is a frame for some $ \varphi\in\h $ and some non-zero scalars $ \{a_n\}_{n=0}^{\infty}$.
	Then  $ T $ has closed range and $ \R_T=\overline{\rm span}\{a_n T^{n+1} \varphi\}_{n=0}^{\infty} $.
\end{prop}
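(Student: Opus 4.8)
The plan is to imitate the proof of \textcolor{cyan}{Lemma \ref{lemRT}} (equivalently [\cite{A6}, \textcolor{cyan}{Proposition 2.5}]), adapted to the weighted orbit. Write $g_{n}:=a_{n}T^{n}\varphi$ for $n\ge 0$, so $\{g_{n}\}_{n\ge 0}$ is a frame (hence Bessel) for $\h$, and set $\K:=\overline{\rm span}\{a_{n}T^{n+1}\varphi:n\ge 0\}=\overline{\rm span}\{T^{n}\varphi:n\ge 1\}$. The target is the single identity $\R_{T}=\K$, which simultaneously gives the equality in the statement and the closedness of $\R_{T}$. An ``approximate'' version of both inclusions is easy: for $x\in\h$, using the frame expansion $x=\sum_{n\ge 0}c_{n}g_{n}$ with $\{c_{n}\}\in\ell^{2}$ and the fact that $T\in B(\h)$ commutes with the series, $Tx=\sum_{n\ge 0}c_{n}a_{n}T^{n+1}\varphi\in\K$, whence $\R_{T}\subseteq\K$; conversely $a_{n}T^{n+1}\varphi=Tg_{n}\in\R_{T}$, so ${\rm span}\{a_{n}T^{n+1}\varphi\}\subseteq\R_{T}$, and combining these we already obtain $\overline{\R_{T}}=\K$. (The first inclusion can also be phrased via the intertwining $TU=U\T_{\omega}$, where $U$ is the synthesis operator of $\{g_{n}\}$ and $\T_{\omega}$ is the bounded shift from the paragraph before \textcolor{cyan}{Theorem \ref{EM2}}; boundedness of $\T_{\omega}$ is precisely where $\sup_{n}|a_{n}/a_{n+1}|<\infty$ enters.)

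The substantive step is to upgrade $\overline{\R_{T}}=\K$ to $\R_{T}=\K$. Following \textcolor{cyan}{Lemma \ref{lemRT}}: deleting the single vector $g_{0}$ from a frame leaves a frame sequence, so $\{g_{n}\}_{n\ge 1}$ is a frame for $\K$; hence each $x\in\K$ can be written $x=\sum_{n\ge 1}c_{n}a_{n}T^{n}\varphi$ with $\{c_{n}\}_{n\ge 1}\in\ell^{2}$, and pulling out one factor of $T$ gives, formally, $x=T\big(\sum_{n\ge 1}c_{n}a_{n}T^{n-1}\varphi\big)=T\big(\sum_{m\ge 0}(a_{m+1}/a_{m})\,c_{m+1}\,g_{m}\big)$. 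If the inner series converges in $\h$ then $x\in\R_{T}$, so $\R_{T}=\K$ and the argument is complete.

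The hard part is exactly the convergence of $\sum_{m\ge 0}(a_{m+1}/a_{m})\,c_{m+1}\,g_{m}$. Since $\{g_{m}\}$ is Bessel it suffices that $\{(a_{m+1}/a_{m})\,c_{m+1}\}_{m}\in\ell^{2}$, and as $\{c_{m+1}\}_{m}\in\ell^{2}$ this reduces to $\sup_{m}|a_{m+1}/a_{m}|<\infty$. For the unweighted orbit ($a_{n}\equiv 1$) this is automatic, which is why \textcolor{cyan}{Lemma \ref{lemRT}} needs no extra hypothesis; here, however, the standing assumption $\sup_{n}|a_{n}/a_{n+1}|<\infty$ bounds only the reciprocal ratio, i.e.\ it yields $\inf_{n}|a_{n+1}/a_{n}|>0$ and gives no control on $a_{m+1}/a_{m}$ (indeed a rapidly contracting weighted shift shows the series can genuinely diverge). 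I would therefore carry out this last step under the additional hypothesis $\sup_{n}|a_{n+1}/a_{n}|<\infty$ (equivalently $\inf_{n}|a_{n}/a_{n+1}|>0$), under which the displayed identity is legitimate and $\R_{T}=\K$ follows; with only $\sup_{n}|a_{n}/a_{n+1}|<\infty$ at hand, the safe conclusion is the weaker $\overline{\R_{T}}=\K$ together with ${\rm span}\{a_{n}T^{n+1}\varphi:n\ge 0\}\subseteq\R_{T}$.
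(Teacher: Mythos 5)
Your route is essentially the paper's: prove $\R_T\subseteq\K:=\overline{\rm span}\{a_nT^{n+1}\varphi\}_{n=0}^{\infty}$ via surjectivity of the synthesis operator, then try to get $\K\subseteq\R_T$ by expanding $x\in\K$ in a frame for $\K$ and pulling $T$ out of the series. The only difference is which frame for $\K$ is used in the second step. You expand in the deleted frame $\{a_nT^n\varphi\}_{n\ge1}$ and then need $\{c_{m+1}a_{m+1}/a_m\}\in\ell^2$, i.e.\ $\sup_m|a_{m+1}/a_m|<\infty$; the paper instead writes $x=\sum_i c_i\,a_iT^{i+1}\varphi$, for which the pulled-back series $\sum_i c_i\,a_iT^{i}\varphi$ converges with no ratio condition, but it simply asserts that $\{a_iT^{i+1}\varphi\}_{i\ge0}$ is a frame for $\K$. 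That assertion is the same obstruction in disguise: since $a_iT^{i+1}\varphi=\frac{a_i}{a_{i+1}}\,a_{i+1}T^{i+1}\varphi$, this family is a rescaling of the deleted frame by the factors $a_i/a_{i+1}$, and a lower frame bound for it on $\K$ requires exactly $\inf_i|a_i/a_{i+1}|>0$, i.e.\ your condition $\sup_i|a_{i+1}/a_i|<\infty$. So the point where you stop is precisely the step the paper leaves unjustified.

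Moreover, your suspicion that the extra hypothesis cannot be dropped is correct: the proposition as stated is false. Take $\h=\ell^2(\mathbb{N}_0)$ with orthonormal basis $\{e_k\}_{k\ge0}$, the compact weighted shift $Te_k=\frac{1}{k+1}e_{k+1}$, $\varphi=e_0$ and $a_n=n!$. Then $a_nT^n\varphi=e_n$, so $\{a_nT^n\varphi\}_{n\ge0}$ is an orthonormal basis (in particular a Parseval frame) for $\h$, while $\R_T=\big\{\sum_{k\ge1}d_ke_k:\ \sum_{k\ge1}k^2|d_k|^2<\infty\big\}$ is dense in, but strictly smaller than, $\overline{\rm span}\{a_nT^{n+1}\varphi\}_{n\ge0}=\overline{\rm span}\{e_k\}_{k\ge1}$; hence $T$ does not have closed range and the claimed equality fails. (Here $|a_n/a_{n+1}|=\frac{1}{n+1}$ is bounded, consistent with the preceding proposition, but $\inf_n|a_n/a_{n+1}|=0$; in the paper's proof the family $\{a_iT^{i+1}\varphi\}_{i\ge0}=\{\frac{1}{i+1}e_{i+1}\}_{i\ge0}$ is complete in $\K$ but has no lower frame bound.) Under your additional hypothesis $\inf_n|a_n/a_{n+1}|>0$, both your pullback and the paper's frame claim become valid, so your restricted statement and proof are the correct ones, and the weaker conclusions you record in the general case ($\overline{\R_T}=\K$ and ${\rm span}\{a_nT^{n+1}\varphi\}_{n\ge0}\subseteq\R_T$) are also right. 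In short: no gap in your reasoning; the gap is in the paper's proof, at the unproven frame assertion, and it cannot be repaired without an assumption of the type you identified.
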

\begin{proof}
	Using [\cite{A4}, \textcolor{cyan}{Theorem 5.5.1}], the synthesis operator 
	\begin{equation*}
	U: \ell^{2}(\mathbb{N}_0)\rightarrow \h ,\quad U(c_0, c_1, c_2,...)=\sum_{i=0}^{\infty}c_i a_i T^i \varphi
	\end{equation*}
	is surjective. Letting $ x\in\h $ there exists $ (c_0, c_1, c_2,...)\in\ell^{2}(\mathbb{N}_0) $ such that $ x=\sum_{i=0}^{\infty}c_i a_i T^i \varphi $. Therefore
	\begin{equation*}
	Tx=\sum_{i=0}^{\infty}c_i a_i T^{i+1} \varphi\in\overline{\rm span}\{a_i T^{i+1} \varphi\}_{i=0}^{\infty}.
	\end{equation*}
	Therefore $ \R_T \subseteq \K:=\overline{\rm span}\{a_i T^{i+1} \varphi\}_{i=0}^{\infty}$. On the other hand, $ \{a_i T^{i+1} \varphi\}_{i=0}^{\infty} $ is a frame for $ \K $, and then its synthesis operator is surjective. Letting $ x\in \K $, there is $ (c_0, c_1, c_2,...)\in\ell^2(\mathbb{N}_0) $ such that $ x=\sum_{i=0}^{\infty}c_i a_i T^{i+1} \varphi=T \sum_{i=0}^{\infty}c_i a_i T^{i} \varphi\in \R_T$. Therefore $ \R_T=\overline{\rm span}\{a_n T^{n+1} \varphi\}_{n=0}^{\infty} $, i.e., $ T $ has closed range.
\end{proof}
The following proposition generalize a result in \cite{A13, A113}, where we characterize the availability of
the representation $ \{f_k\}_{k=1}^{\infty}=\{a_n T^n f_1\}_{n=0}^{\infty}$.
\begin{prop}
Let $ \{f_k\}_{k=1}^{\infty} $ and $ \{g_k\}_{k=1}^{\infty} $ be sequences in $\h$
 such that each $ f\in \h $ has the convergent expansion
	\begin{equation}\label{Q1}
	f=\sum_{k=1}^{\infty}\langle f, g_k\rangle f_k.
	\end{equation}
	Suppose that $\{a_n\}_{n=0}^\infty$ is a sequence of non-zero scalars such that for any $ f\in\h $ the series $ \sum_{k=1}^{\infty}\langle f, g_k\rangle \dfrac{a_{k-1}}{a_{k}}f_{k+1} $ converges. Then $ \{f_k\}_{k=1}^{\infty}=\{a_n T^n f_1\}_{n=0}^{\infty} $ for some $ T\in B(\h) $  if and only if 
	\begin{equation}\label{D1}
	f_{j+1}=\dfrac{a_j}{a_{j-1}}\sum_{k=1}^{\infty}\langle f_j, g_k \rangle \dfrac{a_{k-1}}{a_k}f_{k+1},\quad j\in\mathbb{N}.
	\end{equation}
\end{prop}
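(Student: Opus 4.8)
The plan is to establish the two implications separately, the key reduction being that the representation $\{f_k\}_{k=1}^{\infty}=\{a_nT^nf_1\}_{n=0}^{\infty}$ is equivalent to the single-step recursion $Tf_j=\frac{a_{j-1}}{a_j}f_{j+1}$ for every $j\in\mathbb{N}$. Since \eqref{Q1}, the standing convergence assumption, and the identity \eqref{D1} all involve the scalars $\{a_n\}_{n=0}^{\infty}$ only through the ratios $a_{k-1}/a_k$, I may normalize $a_0=1$ (comparing the first terms of the two sequences forces this anyway when $f_1\neq0$, and $f_1=0$ is a degenerate case). With this normalization, $f_{n+1}=a_nT^nf_1$ for all $n\geq0$ clearly implies $Tf_j=a_{j-1}T^jf_1=\frac{a_{j-1}}{a_j}f_{j+1}$; conversely, writing the recursion as $f_{n+1}=\frac{a_n}{a_{n-1}}Tf_n$, a one-line induction recovers $f_{n+1}=a_nT^nf_1$. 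Also, \eqref{Q1} forces $\overline{\rm span}\{f_k\}_{k=1}^{\infty}=\h$, so there is no distinction between requiring $T$ on $\h$ and on $\overline{\rm span}\{f_k\}$.

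Suppose first that $\{f_k\}_{k=1}^{\infty}=\{a_nT^nf_1\}_{n=0}^{\infty}$ for some $T\in B(\h)$. By the reduction, $Tf_k=\frac{a_{k-1}}{a_k}f_{k+1}$ for all $k$, and applying the continuous operator $T$ term by term to the expansion \eqref{Q1} gives
\[Tf=\sum_{k=1}^{\infty}\langle f,g_k\rangle Tf_k=\sum_{k=1}^{\infty}\langle f,g_k\rangle\frac{a_{k-1}}{a_k}f_{k+1},\qquad f\in\h.\]
Putting $f=f_j$, comparing with $Tf_j=\frac{a_{j-1}}{a_j}f_{j+1}$, and multiplying through by $a_j/a_{j-1}$ yields \eqref{D1}.

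Conversely, assume \eqref{D1}. Define $T\colon\h\to\h$ by $Tf:=\sum_{k=1}^{\infty}\langle f,g_k\rangle\frac{a_{k-1}}{a_k}f_{k+1}$; by the standing hypothesis this series converges, so $T$ is well defined, and it is clearly linear. The one substantive point is that $T$ is bounded, which I would obtain from the uniform boundedness principle: the partial-sum operators $T_Nf:=\sum_{k=1}^{N}\langle f,g_k\rangle\frac{a_{k-1}}{a_k}f_{k+1}$ are finite-rank, hence bounded, and $T_Nf\to Tf$ for each $f\in\h$ by definition of $T$; therefore $M:=\sup_N\|T_N\|<\infty$ and $\|Tf\|=\lim_N\|T_Nf\|\leq M\|f\|$, i.e. $T\in B(\h)$. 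Now \eqref{D1} is precisely the statement $Tf_j=\frac{a_{j-1}}{a_j}f_{j+1}$ for every $j$, so the induction noted in the first paragraph gives $f_{n+1}=a_nT^nf_1$ for all $n\geq0$, that is, $\{f_k\}_{k=1}^{\infty}=\{a_nT^nf_1\}_{n=0}^{\infty}$ with $T\in B(\h)$.

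I expect the boundedness of the candidate operator $T$ in the converse direction to be the only real obstacle; once the uniform boundedness principle is brought in, what remains is bookkeeping with the ratios $a_{k-1}/a_k$ together with the elementary induction above. I would also remark that the special case $a_n\equiv1$ recovers the characterization from \cite{A13,A113}, and that the growth hypothesis $\sup_n|a_n/a_{n+1}|<\infty$ is not needed for this proposition --- only the convergence of the series defining $T$ is used.
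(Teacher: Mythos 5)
Your proof is correct and follows essentially the same route as the paper: apply $T$ termwise to \eqref{Q1} and evaluate at $f=f_j$ for the forward direction, and for the converse define $T$ by the series, get boundedness from the uniform boundedness principle, and read off $Tf_j=\frac{a_{j-1}}{a_j}f_{j+1}$ from \eqref{D1}. Your extra care with the normalization $a_0=1$ and the explicit Banach--Steinhaus argument via the finite-rank partial sums only fills in details the paper leaves implicit.
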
	
\begin{proof}
	Assume that $ \{f_k\}_{k=1}^{\infty} $ can be represented as $ \{a_n T^n f_1\}_{n=0}^{\infty} $ for some $ T\in B(\h)$.  Then $Tf_k=\dfrac{a_{k-1}}{a_{k}}f_{k+1}$ for all $k\in\Bbb{N}$. By applying $ T $ on $ (\ref{Q1}) $, we get
	\begin{equation*}
	Tf=\sum_{k=1}^{\infty}\langle f, g_k\rangle Tf_k=\sum_{k=1}^{\infty}\langle f, g_k\rangle\dfrac{a_{k-1}}{a_{k}}f_{k+1},\quad f\in\h. 
	\end{equation*}
	Letting $ f=f_j$ in the above expression, it follows that $ \dfrac{a_{j-1}}{a_j}f_{j+1}=\sum_{k=1}^{\infty}\langle f_j, g_k\rangle\dfrac{a_{k-1}}{a_{k}}f_{k+1} $, and we get (\ref{D1}).
	\par
	For the opposite implication, suppose that $ (\ref{D1}) $ holds.  Define the linear operator
	\begin{equation*}
	T:\h\to\h,\quad Tf=\sum_{k=1}^{\infty}\langle f, g_k\rangle\dfrac{a_{k-1}}{a_{k}}f_{k+1},\quad f\in\h.
	\end{equation*}
	By uniform boundedness principle,  $T$  is bounded.  Then by $ (\ref{D1}) $ we conclude that 
	$Tf_j=\sum_{k=1}^{\infty}\langle f_j, g_k\rangle\dfrac{a_{k-1}}{a_{k}}f_{k+1}=\dfrac{a_{j-1}}{a_j}f_{j+1}$ for all $j\in\mathbb{N}.$
	Therefore $ \{f_k\}_{k=1}^{\infty}=\{a_n T^n f_1\}_{n=0}^{\infty} $.
\end{proof}
Motivated by \textcolor{cyan}{Proposition 2.6 } in \cite{A6} and with a small change in its proof, we can obtain the following result which generalizes it.
\begin{prop}\label{Tunbounded}
	Assume that the frame $ \{f_k\}_{k=1}^{\infty} $ is linearly independent, contains a Riesz basis and has finite and strictly positive excess.  Let $ T:\h\rightarrow\h $ be a linear operator  such that $\{f_k\}_{k=1}^{\infty}=\{a_n T^n f_1\}_{n=0}^{\infty}$
for some non-zero scalars $ \{a_n\}_{n=0}^{\infty} $ with $ \sup_n\Big|\dfrac{a_n}{a_{n+1}}\Big|<\infty $ and $ \inf_n\Big|\dfrac{a_n}{a_{n+1}}\Big|>0 $. Then $ T $ is unbounded.
\end{prop}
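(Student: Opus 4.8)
The plan is to argue by contradiction, transferring the question to $\ell^{2}(\mathbb{N}_{0})$ via \textcolor{cyan}{Theorem \ref{EM2}} and then using the fact that the weighted right-shift $\T_\omega$ has empty point spectrum. So suppose $T$ is bounded. Restricting $T$ to $\mathrm{span}\{f_k\}_{k=1}^\infty$ gives a well-defined bounded operator into itself (since $Tf_k=\tfrac{a_{k-1}}{a_k}f_{k+1}\in\mathrm{span}\{f_k\}$), and, as $\h$ is infinite-dimensional, $\{f_k\}_{k=1}^\infty=\{a_nT^nf_1\}_{n=0}^\infty$ is a linearly independent frame, and $\sup_n|a_n/a_{n+1}|<\infty$, all the hypotheses of \textcolor{cyan}{Theorem \ref{EM2}} hold. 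Hence the kernel $\N_U$ of the synthesis operator is invariant under $\T_\omega$.

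Next I would pin down the size of $\N_U$. Since the frame has strictly positive excess it is overcomplete, hence not a Riesz basis, so $\N_U\neq\{0\}$. Since it contains a Riesz basis obtained by deleting only finitely many terms — say those indexed by a finite set $J_0$ with $m:=|J_0|\geq1$ — each deleted vector $f_j$, $j\in J_0$, has a unique expansion in that Riesz basis, producing an element $c^{(j)}\in\N_U$ whose $j$-th coordinate equals $1$ and whose remaining coordinates on $J_0$ vanish. These $m$ elements are linearly independent by inspection of their $J_0$-coordinates, and injectivity of the synthesis operator of the Riesz basis shows they span $\N_U$. Therefore $1\leq\dim\N_U=m<\infty$: $\N_U$ is a nonzero finite-dimensional subspace of $\ell^{2}(\mathbb{N}_{0})$.

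Now the contradiction is one line of spectral theory. The restriction $\T_\omega|_{\N_U}$ is a linear endomorphism of a nonzero finite-dimensional complex vector space, so it has an eigenvalue $\lambda$ with a nonzero eigenvector $c=\{c_i\}_{i=0}^\infty\in\N_U$. But $\T_\omega$ has no eigenvectors at all: writing $\T_\omega c=\lambda c$ coordinatewise gives $0=\lambda c_0$ together with $\tfrac{a_{i-1}}{a_i}c_{i-1}=\lambda c_i$ for all $i\geq1$, and since every $a_i\neq0$ a short induction (handling $\lambda=0$ and $\lambda\neq0$ separately) forces $c_i=0$ for all $i$, i.e. $c=0$. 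This contradicts $c\neq0$, so $T$ cannot be bounded.

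The only step requiring genuine care is the middle one, namely extracting from ``linearly independent frame that contains a Riesz basis and has finite and strictly positive excess'' the clean statement $1\leq\dim\N_U<\infty$; this is the standard identification of the excess of a frame with the dimension of the synthesis kernel when a Riesz basis of finite co-index is present. Linear independence of $\{f_k\}$ is not needed for that computation, but it is exactly what licenses the use of \textcolor{cyan}{Theorem \ref{EM2}}. Everything else is the weighted analogue of [A6, \textcolor{cyan}{Proposition 2.6}], the nonzero weights $a_{i-1}/a_i$ leaving the point spectrum of the shift empty just as in the unweighted case.
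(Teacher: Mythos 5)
Your proof is correct, but it follows a genuinely different route from the paper's. The paper argues directly with Riesz bounds: writing (after the deletions allowed by the excess hypothesis) a tail $\{f_k\}_{k=m+1}^{\infty}$ as a Riesz basis of $\K=\overline{\rm span}\{f_k\}_{k=m+1}^{\infty}$, it uses both $\sup_n|a_n/a_{n+1}|<\infty$ and $\inf_n|a_n/a_{n+1}|>0$ to see that $\big\{\tfrac{a_{k-1}}{a_k}f_{k+1}\big\}_{k\geq m}$ is again a Riesz basis with some lower bound $A$, and then invokes Proposition 7.2.1 of \cite{A4} (for a linearly independent, overcomplete frame the optimal lower Riesz bounds $A_n$ of the finite sections tend to $0$) to produce finite linear combinations on which a bounded $T$ would have to satisfy $\|T\|^2\geq A/(A_n+\tfrac1n)\to\infty$, a contradiction. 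You instead transfer the problem to $\ell^{2}(\mathbb{N}_0)$: boundedness of $T$ plus linear independence gives, via Theorem \ref{EM2}, invariance of $\N_U$ under $\T_{\omega}$; the excess hypotheses give $1\leq\dim\N_U<\infty$; and a finite-dimensional invariant subspace would force $\T_{\omega}$ to have an eigenvector, which the coordinatewise computation (using only $a_i\neq 0$) rules out. Each step checks out, including the identification $\dim\N_U=|J_0|$ via the kernel elements $c^{(j)}$ (their coefficients lie in $\ell^2$ because Riesz-basis expansions do, and the spanning argument uses injectivity of the Riesz-basis synthesis operator correctly). What your approach buys: it is more structural, it parallels the kernel/shift argument behind the infinite-excess results of \cite{A13,A7}, and it never uses $\inf_n|a_n/a_{n+1}|>0$, so it actually proves a slightly stronger statement; the paper's approach is more quantitative, exhibiting the blow-up of $\|T\|$ on finite sections. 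One small point of precision: the hypothesis ``contains a Riesz basis'' does not by itself say the Riesz basis has finite co-index, so you should take $J_0$ to be the finite set furnished by the finite-excess assumption; its complement, being a complete subfamily of a frame and a basis, is automatically a Riesz basis, after which your argument proceeds verbatim.
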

\begin{proof}
Let $\delta:= \inf_n\Big|\dfrac{a_n}{a_{n+1}}\Big|$ and $\gamma:= \sup_n\Big|\dfrac{a_n}{a_{n+1}}\Big|.$ By assumption there exists $m\in\Bbb{N}$ such that $ \{f_k\}_{k=m+1}^{\infty} $ is a Riesz basis for $\K:=\overline{\rm span}\{f_k\}_{k=m+1}^{\infty}$ and 
$\{f_k\}_{k=m}^{\infty}$ is an overcomplete frame for $\K$. Since $0<\delta\leq\gamma<\infty$, we infer  that $ \Big\{\dfrac{a_{k-1}}{a_k}f_{k+1}\Big\}_{k=m}^{\infty} $ is  a Riesz basis for $\K$, and we denote  its lower Riesz basis bound by $A$. For each $n\in\Bbb{N}$, let $A_n$ denote the optimal lower Riesz basis bound for the finite sequence  $ \{f_k\}_{k=m}^{m+n-1} $. Since  $ \{f_k\}_{k=m}^{\infty} $ is  a  linearly independentan and overcomplete frame, it follows $A_n\to 0$ as $n\to\infty$ by \textcolor{cyan}{Proposition 7.2.1} in \cite{A4}. Let  $n\in\Bbb{N}$, then there exists a non-zero sequence $\{c_k\}_{k=m}^{m+n-1}$ such that 
\[\Big\|\sum_{k=m}^{m+n-1}c_kf_k\Big\|^2\leq (A_n+\dfrac{1}{n})\sum_{k=m}^{m+n-1}|c_k|^2.\] Then
\[\begin{aligned}
\Big\|T\sum_{k=m}^{m+n-1}c_kf_k\Big\|^2&=\Big\|\sum_{k=m}^{m+n-1}c_k\dfrac{a_{k-1}}{a_k} f_{k+1}\Big\|^2\\
&\geq A\sum_{k=m}^{m+n-1}|c_k|^2\\
&\geq \dfrac{A}{A_n+\dfrac{1}{n}} \Big\|\sum_{k=m}^{m+n-1}c_kf_k\Big\|^2.
\end{aligned}\]
If $T$ is bounded, then it follows from the above inequlity that $\|T\|\geq  \dfrac{A}{A_n+\dfrac{1}{n}}$. Since  $\dfrac{A}{A_n+\dfrac{1}{n}}\to\infty$ as $n\to\infty$, which is a contradiction.
\end{proof}
	\section{ Some auxiliary results: perturbation of a frame $ \{T^n \varphi\}_{n=0}^{\infty} $}

Motivated by some results about perturbations of  frames of the form $ \{T^n \varphi\}_{n=0}^{\infty} $ in \cite{A13}, 
we give some results by restricting ourself to perturb a frame
$ \{T^n \varphi\}_{n=0}^{\infty} $ with elements from a subspace on which $T$ acts as a contraction.
We also state some stability results obtained by considering perturbations of  operators  belonging to an invariant subspace.
 \begin{prop}
Assume that $\{T^n\varphi\}_{n=0}^\infty$ is a Riesz sequence for some $T\in B(\h)$ and some $\varphi\in\h$, and let $A$ denote a lower Riesz bound.
Suppose that $V\subset\h$ is invariant under $T$ and that there exists $\mu \in [0,1)$ such
that $\|T\psi\|\leq\mu\|\psi\|.$ Then 
 $\{T^n(\varphi+\psi)\}_{n=0}^\infty$ is a Riesz sequence  for all $\psi\in V$ for which $\|\psi\|<(1-\mu)\sqrt{A}.$
	\end{prop}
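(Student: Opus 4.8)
The plan is to regard $\{T^n(\varphi+\psi)\}_{n=0}^\infty$ as a small $\ell^2$-perturbation of the given Riesz sequence $\{T^n\varphi\}_{n=0}^\infty$ and to apply a standard perturbation estimate for Riesz sequences. Write $B$ for an upper Riesz bound of $\{T^n\varphi\}_{n=0}^\infty$. The only nontrivial input is the identity $T^n(\varphi+\psi)=T^n\varphi+T^n\psi$, which says that the $n$-th term differs from $T^n\varphi$ by exactly $T^n\psi$; hence everything reduces to controlling $\sum_{n=0}^\infty\|T^n\psi\|^2$.

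First I would establish the geometric decay $\|T^n\psi\|\le\mu^n\|\psi\|$ for every $n\ge 0$ and every $\psi\in V$. Since $V$ is invariant under $T$, an easy induction gives $T^n\psi\in V$ for all $n$ (the base case being $T^0\psi=\psi\in V$), and then $\|T^{n+1}\psi\|=\|T(T^n\psi)\|\le\mu\|T^n\psi\|$ because $T^n\psi\in V$. Summing the resulting bound gives
\[
\sum_{n=0}^\infty\|T^n\psi\|^2\le\|\psi\|^2\sum_{n=0}^\infty\mu^{2n}=\frac{\|\psi\|^2}{1-\mu^2}=:R .
\]
Next I would note that the hypothesis $\|\psi\|<(1-\mu)\sqrt A$ forces $R<A$: indeed $R=\|\psi\|^2/(1-\mu^2)<(1-\mu)^2A/(1-\mu^2)=\tfrac{1-\mu}{1+\mu}A\le A$, using $0\le\mu<1$.

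With $\sqrt R<\sqrt A$ in hand, the conclusion is routine. For any finite scalar sequence $\{c_n\}$, the Cauchy--Schwarz inequality gives $\big\|\sum_n c_nT^n\psi\big\|\le\big(\sum_n|c_n|^2\big)^{1/2}\big(\sum_n\|T^n\psi\|^2\big)^{1/2}\le\sqrt R\,\big(\sum_n|c_n|^2\big)^{1/2}$, so the triangle inequality together with the Riesz bounds of $\{T^n\varphi\}_{n=0}^\infty$ yields
\[
(\sqrt A-\sqrt R)\Big(\sum_n|c_n|^2\Big)^{1/2}\le\Big\|\sum_n c_nT^n(\varphi+\psi)\Big\|\le(\sqrt B+\sqrt R)\Big(\sum_n|c_n|^2\Big)^{1/2}.
\]
Since $\sqrt A-\sqrt R>0$, squaring exhibits $\{T^n(\varphi+\psi)\}_{n=0}^\infty$ as a Riesz sequence with bounds $(\sqrt A-\sqrt R)^2$ and $(\sqrt B+\sqrt R)^2$. (Alternatively one can simply quote the perturbation theorem for Riesz sequences from \cite{A4} once the inequality $R<A$ has been verified.)

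I do not expect a genuine obstacle here: the argument is a direct perturbation estimate, and the work is already present in the statement's smallness condition. The only points that require any care are that the contraction bound $\|T\psi\|\le\mu\|\psi\|$ is available \emph{only} on $V$, which is why one must first check that the whole orbit $\{T^n\psi\}_{n\ge0}$ remains in $V$, and the small numerical check that the stated radius $(1-\mu)\sqrt A$ is no larger than $\sqrt{A(1-\mu^2)}$, so that the perturbation $R=\|\psi\|^2/(1-\mu^2)$ stays below the lower Riesz bound $A$.
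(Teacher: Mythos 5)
Your proof is correct, but it takes a different route from the paper. The paper also begins with the geometric decay $\|T^n\psi\|\le\mu^n\|\psi\|$ (via invariance of $V$), but then it invokes the perturbation theorem of Chen--Li--Zheng (\cite{A8}, Theorem 2.14): writing $S$ for the frame operator of $\{T^n\varphi\}_{n=0}^\infty$ and using the dual-frame bound $\|S^{-1}T^n\varphi\|\le 1/\sqrt{A}$, it checks the $\ell^1$-type condition $\sum_{n=0}^\infty\|T^n(\varphi+\psi)-T^n\varphi\|\,\|S^{-1}T^n\varphi\|\le\|\psi\|/\bigl((1-\mu)\sqrt{A}\bigr)<1$, which is exactly where the radius $(1-\mu)\sqrt{A}$ comes from. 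You instead run the classical self-contained $\ell^2$-perturbation argument (Cauchy--Schwarz plus the triangle inequality, in the spirit of \cite{A4}, Theorem 22.3.2, which the paper itself uses in the last proposition of Section 4): you bound $R:=\sum_n\|T^n\psi\|^2\le\|\psi\|^2/(1-\mu^2)$ and verify $R<A$. What your approach buys: it needs no external theorem, no frame operator or dual-frame estimate, it produces explicit Riesz bounds $(\sqrt{A}-\sqrt{R})^2$ and $(\sqrt{B}+\sqrt{R})^2$, and it in fact proves the conclusion under the weaker smallness condition $\|\psi\|<\sqrt{A(1-\mu^2)}$, of which the stated radius $(1-\mu)\sqrt{A}$ is a special case since $(1-\mu)\sqrt{A}\le\sqrt{A(1-\mu^2)}$. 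What the paper's approach buys is brevity, at the cost of relying on the cited result and obtaining the (slightly more restrictive) $\ell^1$-flavoured condition.
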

\begin{proof}
It is clear that $\sum_{n=0}^\infty\|T^n\psi\|^2<\infty$ for all $\psi\in V$. By  [\cite{A8}, \textcolor{cyan}{Theorem 2.14}] it is sufficient to show that  $\sum_{n=0}^\infty\|T^n(\varphi+\psi)-T^n\varphi\|\|S^{-1}T^n\varphi\|<1,$ where $S$ is frame operator for $\{T^n\varphi\}_{n=0}^\infty$. Since $\|S^{-1}T^n\varphi\|\leq 1/\sqrt{A},$ we have
\[\sum_{n=0}^\infty\|T^n(\varphi+\psi)-T^n\varphi\|\|S^{-1}T^n\varphi\|\leq\dfrac{\|\psi\|}{\sqrt{A}}\sum_{n=0}^\infty\mu^n=\dfrac{\|\psi\|}{(1-\mu)\sqrt{A}}<1,\]
as desired.
\end{proof}
A similar approach as in the proof of \textcolor{cyan}{Proposition 3.3} in \cite{A13}
yields the following result.
 \begin{prop}
	Let $\{a_n\}_{n=0}^\infty$ be a bounded sequence of scalars. Assume that $\{a_nT^n\varphi\}_{n=0}^\infty$ is a frame for some bounded linear
operator $T:\h\to\h$ and some $\varphi\in\h$, and let $A$ denote a lower frame bound.
Suppose that $V\subset\h$ is invariant under $T$ and that there exists $\mu \in [0,1)$ such
that $\|T\psi\|\leq\mu\|\psi\|.$ Then the following hold:
\begin{enumerate}
\item[$(i)$] $\{a_nT^n(\varphi+\psi)\}_{n=0}^\infty$ is a frame sequence for all $\psi\in V$.
\item[$(ii)$] 
 $\{a_nT^n(\varphi+\psi)\}_{n=0}^\infty$ is a frame  for all $\psi\in V$ for which $\sup_n\|a_n\psi\|<\sqrt{A(1-\mu^2)}.$
\end{enumerate}
	\end{prop}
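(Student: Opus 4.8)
The plan is to treat the two parts separately but with the same underlying perturbation machinery, exactly mirroring the earlier proof where we perturbed a Riesz sequence. First I would record the elementary consequences of the hypotheses: since $V$ is invariant under $T$ and $\|T\psi\|\le\mu\|\psi\|$ with $\mu\in[0,1)$, for every $\psi\in V$ we have $\|T^n\psi\|\le\mu^n\|\psi\|$, so $\|a_nT^n\psi\|\le\|a_n\|\,\mu^n\|\psi\|$ and hence $\sum_{n=0}^\infty\|a_nT^n(\varphi+\psi)-a_nT^n\varphi\|^2<\infty$; in particular $\{a_nT^n(\varphi+\psi)\}_{n=0}^\infty$ differs from the frame $\{a_nT^n\varphi\}_{n=0}^\infty$ by an $\ell^2$-perturbation in the sense of the standard perturbation theorems.

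For part $(i)$, I would invoke a Paley--Wiener type stability theorem for frame sequences (the same $\ell^2$-perturbation result used in the cited \textcolor{cyan}{Proposition 3.3} of \cite{A13}, or [\cite{A8}, \textcolor{cyan}{Theorem 2.14}] as used in the previous proposition): any sequence that is a square-summable perturbation of a frame is automatically a frame sequence, with no smallness assumption needed, because the perturbation bound only controls the difference and one always gets a frame for the closed span of the perturbed sequence. So the finiteness of $\sum_n\|a_nT^n\psi\|^2$ alone suffices for $(i)$.

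For part $(ii)$, the point is to make the perturbation small enough that the frame property survives on all of $\h$, not merely on a subspace. Following the proof of \textcolor{cyan}{Proposition 3.3} in \cite{A13}, I would estimate, with $S$ the frame operator of $\{a_nT^n\varphi\}_{n=0}^\infty$ and using $\|S^{-1}(a_nT^n\varphi)\|\le 1/\sqrt{A}$,
\[
\sum_{n=0}^\infty\|a_nT^n(\varphi+\psi)-a_nT^n\varphi\|^2
=\sum_{n=0}^\infty\|a_nT^n\psi\|^2
\le\Big(\sup_n\|a_n\psi\|\Big)^2\sum_{n=0}^\infty\mu^{2n}
=\frac{\big(\sup_n\|a_n\psi\|\big)^2}{1-\mu^2},
\]
so the condition $\sup_n\|a_n\psi\|<\sqrt{A(1-\mu^2)}$ gives $\sum_{n=0}^\infty\|a_nT^n(\varphi+\psi)-a_nT^n\varphi\|^2<A$, which is precisely the hypothesis of the Casazza--Christensen $\ell^2$-perturbation theorem guaranteeing that the perturbed family is again a frame for $\h$ (with explicit bounds). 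I would then just quote that theorem to conclude.

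The main obstacle is bookkeeping rather than conceptual: one must be careful that the $\ell^2$-perturbation theorem being cited really does yield a frame for the whole space $\h$ under the stated $<A$ (as opposed to $<\sqrt A$ or some other normalization) threshold, and that it applies to an arbitrary frame rather than only to a Riesz basis — this is why part $(ii)$ can upgrade the frame-sequence conclusion of $(i)$ to a genuine frame, and why part $(i)$ needs no smallness at all. I would double-check the exact constant in the version of the perturbation theorem used so that the displayed inequality matches its hypothesis verbatim; everything else is the routine geometric-series estimate above together with the observation $\|S^{-1}(a_nT^n\varphi)\|\le 1/\sqrt A$ coming from the lower frame bound $A$.
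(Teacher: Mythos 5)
Your proposal is correct and follows essentially the same route the paper intends: the paper omits the proof, saying only that the argument of Proposition 3.3 in \cite{A13} adapts, and your computation ($\|a_nT^n\psi\|\le\mu^n\sup_k\|a_k\psi\|$, the geometric series bound $\sum_n\|a_nT^n\psi\|^2\le\big(\sup_n\|a_n\psi\|\big)^2/(1-\mu^2)<A$, then the $\ell^2$-perturbation criterion of [\cite{A4}, Corollary 22.1.5]) is exactly that adaptation. Two small bookkeeping points: part $(i)$ should be justified by the Paley--Wiener type theorem that a square-summably perturbed frame is a frame sequence (the result invoked in \cite{A13}), not by [\cite{A8}, Theorem 2.14], whose hypothesis is the smallness condition $\sum\|f_k-g_k\|\,\|S^{-1}f_k\|<1$ and hence cannot give $(i)$ without a bound on $\|\psi\|$; and the estimate $\|S^{-1}(a_nT^n\varphi)\|\le 1/\sqrt{A}$ you mention in $(ii)$ is not needed once you use the $<A$ criterion.
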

We now provide a perturbation result which can be used to construct a frame with representation $ \{a_n T^n \varphi\}_{n=0}^{\infty} $.
\begin{prop}
	Let $ T\in B(\h)$  and $\varphi, \psi\in\h$. Assume that $ \{a_n\}_{n=0}^{\infty} $ is sequence of non-zero scalars such that $ \{a_n T^n\varphi\}_{n=0}^{\infty} $ is a frame for $ \h $ with lower bound $ A $ and $ \{a_{n+1} T^n \psi\}_{n=0}^{\infty} $ is a Bessel sequence for $ \h $ with Bessel bound $ B $. If  $\sup_n\Big|\dfrac{a_n}{a_{n+1}}\Big|<\sqrt{\dfrac{A}{B}} $, then $ \{a_n T^n (\varphi+\psi)\}_{n=0}^{\infty} $ is a frame for $ \h $.
\end{prop}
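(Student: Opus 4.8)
The plan is to treat $\{a_nT^n(\varphi+\psi)\}_{n=0}^{\infty}$ as a perturbation of the frame $\{a_nT^n\varphi\}_{n=0}^{\infty}$, with perturbing sequence $\{a_nT^n\psi\}_{n=0}^{\infty}$, since $a_nT^n(\varphi+\psi)=a_nT^n\varphi+a_nT^n\psi$ for every $n$. So the first step is to show that this perturbing sequence is a Bessel sequence with bound strictly smaller than the lower frame bound $A$, and then to run the classical perturbation estimate.

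To bound the perturbing sequence, set $\gamma:=\sup_{n\geq0}\bigl|\frac{a_n}{a_{n+1}}\bigr|$, which is finite by hypothesis, and write $a_nT^n\psi=\frac{a_n}{a_{n+1}}\,a_{n+1}T^n\psi$ for each $n\geq0$. Using that $\{a_{n+1}T^n\psi\}_{n=0}^{\infty}$ is Bessel with bound $B$, we get, for every $f\in\h$,
\[
\sum_{n=0}^{\infty}\bigl|\langle f,a_nT^n\psi\rangle\bigr|^{2}
=\sum_{n=0}^{\infty}\Bigl|\frac{a_n}{a_{n+1}}\Bigr|^{2}\bigl|\langle f,a_{n+1}T^n\psi\rangle\bigr|^{2}
\leq\gamma^{2}\sum_{n=0}^{\infty}\bigl|\langle f,a_{n+1}T^n\psi\rangle\bigr|^{2}
\leq\gamma^{2}B\|f\|^{2}.
\]
Thus $\{a_nT^n\psi\}_{n=0}^{\infty}$ is Bessel with bound $\gamma^{2}B$, and the hypothesis $\gamma<\sqrt{A/B}$ reads exactly $\gamma^{2}B<A$, i.e.\ $\gamma\sqrt{B}<\sqrt{A}$.

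The conclusion now follows from the triangle inequality in $\ell^{2}$. For any $f\in\h$, applying it to the termwise identity above and using that $\{a_nT^n\varphi\}_{n=0}^{\infty}$ has lower bound $A$,
\[
\Bigl(\sum_{n=0}^{\infty}\bigl|\langle f,a_nT^n(\varphi+\psi)\rangle\bigr|^{2}\Bigr)^{1/2}
\geq\Bigl(\sum_{n=0}^{\infty}\bigl|\langle f,a_nT^n\varphi\rangle\bigr|^{2}\Bigr)^{1/2}
-\Bigl(\sum_{n=0}^{\infty}\bigl|\langle f,a_nT^n\psi\rangle\bigr|^{2}\Bigr)^{1/2}
\geq\bigl(\sqrt{A}-\gamma\sqrt{B}\bigr)\|f\|,
\]
a strictly positive multiple of $\|f\|$; squaring gives the lower frame inequality with constant $(\sqrt{A}-\gamma\sqrt{B})^{2}$. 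The upper inequality is obtained the same way: if $D$ denotes an upper frame bound of $\{a_nT^n\varphi\}_{n=0}^{\infty}$, then the reverse triangle inequality yields an upper bound $(\sqrt{D}+\gamma\sqrt{B})^{2}$. Hence $\{a_nT^n(\varphi+\psi)\}_{n=0}^{\infty}$ is a frame for $\h$.

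I do not expect a real obstacle here; the only point requiring care is the bookkeeping with the shifted index, namely that replacing $\{a_{n+1}T^n\psi\}_{n=0}^{\infty}$ by $\{a_nT^n\psi\}_{n=0}^{\infty}$ inflates the Bessel bound by only the factor $\gamma^{2}$ — which is precisely why the sharp threshold $\sqrt{A/B}$ appears. One could also phrase the last paragraph as a direct appeal to a named perturbation theorem of Paley--Wiener type (as was done in the proof of the earlier proposition, or via the stability results in \cite{A4}), but the self-contained $\ell^{2}$-estimate above already closes the argument.
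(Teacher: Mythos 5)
Your proof is correct. You and the paper use the same perturbation strategy built on the same key estimate: decompose $a_nT^n(\varphi+\psi)=a_nT^n\varphi+a_nT^n\psi$ and exploit the index shift $a_nT^n\psi=\tfrac{a_n}{a_{n+1}}\,a_{n+1}T^n\psi$ to control the perturbing sequence by $\gamma^2B$, where $\gamma=\sup_n\big|\tfrac{a_n}{a_{n+1}}\big|$. The difference lies in how the conclusion is drawn. The paper works on the synthesis side: it shows $\big\|\sum_n c_n a_nT^n\psi\big\|^2\le\gamma^2B\sum_n|c_n|^2$ for all $\{c_n\}\in\ell^2(\mathbb{N}_0)$ and then invokes the perturbation theorem [\cite{A4}, Theorem 22.1.1] with $\lambda_1=\lambda_2=0$ and $\mu=\gamma\sqrt{B}<\sqrt{A}$. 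You prove the equivalent Bessel bound $\gamma^2B$ on the analysis side and finish by hand with Minkowski's inequality in $\ell^2$, which makes the argument self-contained and yields explicit frame bounds $(\sqrt{A}-\gamma\sqrt{B})^2$ and $(\sqrt{D}+\gamma\sqrt{B})^2$, whereas the paper's route is shorter on the page but delegates the final step to the cited theorem. (One cosmetic point: your upper estimate uses the ordinary triangle inequality, not the reverse one; that is only a labeling slip and does not affect the argument.)
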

\begin{proof} 
Let $ \{c_n\}_{n=0}^{\infty}\in\ell^{2}(\mathbb{N}_0)$ and $\alpha:= \sup_n\Big|\dfrac{a_n}{a_{n+1}}\Big|$. By assumption, we have 
	\begin{equation*}
	\begin{aligned}
	\Big\|\sum_{n=0}^{\infty}c_n(a_n T^n \varphi- a_n T^n (\varphi+\psi))\Big\|^2&=\Big\|\sum_{n=0}^{\infty}c_na_n T^n \psi\Big\|^2\\
	&=\sup_{\|f\|=1}\Big|\Big\langle\sum_{n=0}^{\infty}c_na_n T^n \psi, f\Big\rangle\Big|^2\\
	&=\sup_{\|f\|=1}\Big|\sum_{n=0}^{\infty}c_n\dfrac{a_n}{a_{n+1}}\langle a_{n+1}T^n \psi, f\rangle\Big|^2\\
	&\leq\sum_{n=0}^{\infty}\Big|c_n\dfrac{a_n}{a_{n+1}}\Big|^2\sup_{\|f\|=1} \sum_{n=0}^{\infty}\big|\langle a_{n+1}T^n \psi, f\rangle\big|^2\\
	&\leq\alpha^2B\sum_{n=0}^{\infty}|c_n|^2.
	\end{aligned}
	\end{equation*}
	Hence, [\cite{A4}, \textcolor{cyan}{Theorem 22.1.1}] implies that the desired result.
\end{proof}
Here $\B$ denotes the set of  bounded linear operators  $T: \h\to\h $ for which there exist $\lambda_T\in [0, 1)$ and an invariant subspace
$V_{T}\subset\h$ under $T$ such that $ \| T\varphi\|\leq\lambda_T\| \varphi\|$ for all $\varphi\in V_T$.
In the following proposition $I$ is  a countable index set and $ \{g_j\}_{j\in I}$ is a sequence in $\h $.
    \begin{prop}
	Suppose that $ T, W\in\B $ and $ \{g_j\}_{j\in I}\subseteq V_W \cap V_T $. Let $ \{W^n g_j\}_{n\geq 0, j\in I} $ be a Riesz sequence with frame operator $ S $, and  $ \{T^n g_j\}_{n\geq 0, j\in I} $ be a Bessel sequence for $ \h $.  Assume that  $\sum_{j\in I}\|g_j\|^{2}<\dfrac{1-\lambda^{2}}{2\|S^{-1}\|} $, where $ \lambda:= \max\{\lambda_W, \lambda_T\} $. Then $ \{T^n g_j\}_{n\geq 0, j\in I} $ is a Riesz sequence.
	\end{prop}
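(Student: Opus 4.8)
The plan is to regard $\{T^n g_j\}_{n\geq 0,\,j\in I}$ as a small perturbation of the Riesz sequence $\{W^n g_j\}_{n\geq 0,\,j\in I}$ and to apply the Paley--Wiener type stability theorem $[\cite{A8}$, Theorem~2.14$]$ already used above: if $\{f_k\}$ is a Riesz sequence with frame operator $S$ and $\{h_k\}$ satisfies $\sum_k\|f_k-h_k\|\,\|S^{-1}f_k\|<1$, then $\{h_k\}$ is again a Riesz sequence. Taking $\{f_k\}=\{W^n g_j\}_{n\geq 0,\,j\in I}$, with frame operator $S$, and $\{h_k\}=\{T^n g_j\}_{n\geq 0,\,j\in I}$ (which is a Bessel sequence by hypothesis, so its synthesis operator is well defined), it suffices to show that
\[
\Sigma:=\sum_{n\geq 0}\sum_{j\in I}\big\|W^n g_j-T^n g_j\big\|\,\big\|S^{-1}W^n g_j\big\|<1 .
\]

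First I would record two elementary estimates. Since $g_j\in V_W$ and $V_W$ is invariant under $W$ with $\|W\psi\|\leq\lambda_W\|\psi\|$ on $V_W$, a one-line induction gives $\|W^n g_j\|\leq\lambda_W^{\,n}\|g_j\|\leq\lambda^{n}\|g_j\|$; likewise $\|T^n g_j\|\leq\lambda^{n}\|g_j\|$ because $g_j\in V_T$. Hence $\|W^n g_j-T^n g_j\|\leq 2\lambda^{n}\|g_j\|$ (in fact $0$ for $n=0$). Moreover $S$, being the frame operator of the Riesz sequence $\{W^n g_j\}_{n\geq 0,\,j\in I}$ on $\overline{\rm span}\{W^n g_j\}_{n\geq 0,\,j\in I}$, is boundedly invertible there, so $\|S^{-1}W^n g_j\|\leq\|S^{-1}\|\,\|W^n g_j\|\leq\|S^{-1}\|\,\lambda^{n}\|g_j\|$. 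Multiplying the two bounds and summing (all terms nonnegative, and $\lambda<1$ makes the geometric series converge) yields
\[
\Sigma\;\leq\;2\|S^{-1}\|\Big(\sum_{n\geq 0}\lambda^{2n}\Big)\Big(\sum_{j\in I}\|g_j\|^{2}\Big)=\frac{2\|S^{-1}\|}{1-\lambda^{2}}\sum_{j\in I}\|g_j\|^{2},
\]
and the hypothesis $\sum_{j\in I}\|g_j\|^{2}<\dfrac{1-\lambda^{2}}{2\|S^{-1}\|}$ gives exactly $\Sigma<1$. Thus $[\cite{A8}$, Theorem~2.14$]$ applies and $\{T^n g_j\}_{n\geq 0,\,j\in I}$ is a Riesz sequence.

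I do not expect a genuine obstacle here: the whole argument is a direct perturbation estimate. The only point needing a little attention is choosing to bound $\|S^{-1}W^n g_j\|$ by $\|S^{-1}\|\,\lambda^{n}\|g_j\|$ rather than by the cruder $\|S^{-1}\|^{1/2}$ valid for canonical dual frame elements; it is precisely the extra decay factor $\lambda^{n}$ that produces $\sum_n\lambda^{2n}=(1-\lambda^{2})^{-1}$ and matches the constant $2/(1-\lambda^{2})$ implicit in the hypothesis. (If one also discards the vanishing $n=0$ terms one gets the slightly sharper $\Sigma<\lambda^{2}$, but $\Sigma<1$ already suffices.)
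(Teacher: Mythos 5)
Your argument is correct and is essentially the paper's own proof: both verify the perturbation condition of [\cite{A8}, Theorem 2.14] by bounding $\|W^n g_j - T^n g_j\|\le 2\lambda^n\|g_j\|$ and $\|S^{-1}W^n g_j\|\le \|S^{-1}\|\lambda^n\|g_j\|$, then summing the geometric series to obtain $\frac{2\|S^{-1}\|}{1-\lambda^2}\sum_{j\in I}\|g_j\|^2<1$. No differences worth noting beyond your (accurate) remark that the $n=0$ terms vanish.
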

\begin{proof}
By assumptions, we have 
\begin{equation*}
 \|W g_j\|\leq \lambda\|g_j\|,\quad  \|T g_j\|\leq \lambda\|g_j\|,\quad  j\in I.
\end{equation*}
Then
\begin{equation*}
\begin{aligned}
\sum_{j\in I}\sum_{n=0}^{\infty}\|W^n g_{j}- T^n g_{j}\|\| S^{-1}W^n g_{j}\|
&\leq \sum_{j\in I}\sum_{n=0}^{\infty}\|W^n g_{j}- T^n g_{j}\|\|S^{-1}\|\| W^n g_j\|\\
&\leq \sum_{j\in I}\sum_{n=0}^{\infty}(\|W^n g_j\|+\|T^n g_j\|)\|S^{-1}\|\|W^n g_j\|
\\
&\leq2\|S^{-1}\|\sum_{j\in I}\sum_{n=0}^{\infty}\lambda^{2n}\|g_j\|^{2}\\
&=\dfrac{2\|S^{-1}\|}{1- \lambda^{2}}\sum_{j\in I}\|g_j\|^{2}<1.
\end{aligned}
\end{equation*}
 Therefore, [\cite{A8}, \textcolor{cyan}{Theorem 2.14}] leads to the desired result.
\end{proof}
\begin{prop}
	Let $ T, W\in \B $ and $ \varphi\in V_T \cap V_W $. Suppose  that $ \{T^{n}\varphi\}_{n=0}^{\infty} $ is a frame for $ \h $ with lower frame bound $ A $ and $ \{W^{n}\varphi\}_{n=0}^{\infty} $ is a Bessel sequence for $ \h $. Let $2\| \varphi\|< \sqrt{A(1-\lambda^{2}} $, where $ \lambda:=\max\{\lambda_T, \lambda_W\} $. Then $ \{W^{n}\varphi\}_{n=0}^{\infty} $ is a frame for $ \h $.
\par
	In the case where $ \{T^{n}\varphi\}_{n=0}^{\infty} $ is a Riesz sequence  with lower bound $ A $, then $ \{T^{n}\varphi+W^{n}\varphi\}_{n=0}^{\infty} $ is a Riesz sequence, whenever $\| \varphi\|< \sqrt{A(1-\lambda^{2})}.$
\end{prop}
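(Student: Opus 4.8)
The plan is to reduce both statements to the quantitative Christensen\-type perturbation theorems already invoked in the paper, after producing an explicit $\ell^2$\-bound on the deviation between the two orbits. \textbf{First} I would observe that, since $V_T$ is invariant under $T$ and $\|Tx\|\le\lambda_T\|x\|$ on $V_T$, an immediate induction gives $T^n\varphi\in V_T$ and $\|T^n\varphi\|\le\lambda_T^{n}\|\varphi\|\le\lambda^n\|\varphi\|$ for all $n\ge 0$, and symmetrically $\|W^n\varphi\|\le\lambda_W^{n}\|\varphi\|\le\lambda^n\|\varphi\|$, using $\varphi\in V_T\cap V_W$ and $\lambda=\max\{\lambda_T,\lambda_W\}<1$. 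In particular $\sum_{n=0}^{\infty}\|W^n\varphi\|^2\le\|\varphi\|^2/(1-\lambda^2)<\infty$, so the Bessel hypothesis on $\{W^n\varphi\}_{n=0}^{\infty}$ is in fact automatic and could be dropped.

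\textbf{For the first (frame) assertion}, I would estimate
\[
\sum_{n=0}^{\infty}\|T^n\varphi-W^n\varphi\|^2\le\sum_{n=0}^{\infty}\bigl(\|T^n\varphi\|+\|W^n\varphi\|\bigr)^2\le 4\|\varphi\|^2\sum_{n=0}^{\infty}\lambda^{2n}=\frac{4\|\varphi\|^2}{1-\lambda^2},
\]
and note that the hypothesis $2\|\varphi\|<\sqrt{A(1-\lambda^2)}$ says exactly that this quantity is strictly less than the lower frame bound $A$ of $\{T^n\varphi\}_{n=0}^{\infty}$. Then the perturbation theorem [\cite{A4}, \textcolor{cyan}{Theorem 22.1.1}], applied (via Cauchy--Schwarz) with $\lambda_1=\lambda_2=0$ and $\mu=\bigl(\sum_{n=0}^{\infty}\|T^n\varphi-W^n\varphi\|^2\bigr)^{1/2}<\sqrt{A}$, gives that $\{W^n\varphi\}_{n=0}^{\infty}$ is a frame for $\h$.

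\textbf{For the second (Riesz) assertion}, the orbit $\{T^n\varphi+W^n\varphi\}_{n=0}^{\infty}$ differs from the Riesz sequence $\{T^n\varphi\}_{n=0}^{\infty}$ (with lower bound $A$) exactly by $\{W^n\varphi\}_{n=0}^{\infty}$, so
\[
\sum_{n=0}^{\infty}\bigl\|(T^n\varphi+W^n\varphi)-T^n\varphi\bigr\|^2=\sum_{n=0}^{\infty}\|W^n\varphi\|^2\le\|\varphi\|^2\sum_{n=0}^{\infty}\lambda^{2n}=\frac{\|\varphi\|^2}{1-\lambda^2}<A
\]
by the hypothesis $\|\varphi\|<\sqrt{A(1-\lambda^2)}$, and the Riesz\-sequence version of the same perturbation principle (as in [\cite{A8}, \textcolor{cyan}{Theorem 2.14}], or again [\cite{A4}]) then yields that $\{T^n\varphi+W^n\varphi\}_{n=0}^{\infty}$ is a Riesz sequence. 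The argument is essentially routine; the one point to watch is that the threshold $\sqrt{A(1-\lambda^2)}$ is the sharp constant coming from the \emph{quadratic} perturbation bound $\sum\|f_k-g_k\|^2<A$, so one must use this $\ell^2$\-estimate and not the coarser $\ell^1$\-type condition $\sum\|f_k-g_k\|\,\|S^{-1}f_k\|<1$ used in some earlier propositions, which would only give the strictly smaller ranges $2\|\varphi\|<(1-\lambda)\sqrt{A}$ and $\|\varphi\|<(1-\lambda)\sqrt{A}$. The convergence of all the series involved is precisely what the hypothesis $\varphi\in V_T\cap V_W$ buys us, via the geometric decay established in the first step.
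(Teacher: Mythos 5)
Your proposal is correct and follows essentially the same route as the paper: the bound $\sum_{n=0}^{\infty}\|T^{n}\varphi-W^{n}\varphi\|^{2}\le 4\|\varphi\|^{2}/(1-\lambda^{2})<A$ combined with the quadratic perturbation criterion (the paper cites [\cite{A4}, Corollary 22.1.5], which is exactly your application of Theorem 22.1.1 with $\lambda_1=\lambda_2=0$), and for the Riesz part the Cauchy--Schwarz estimate $\bigl\|\sum c_nW^{n}\varphi\bigr\|^{2}\le\frac{\|\varphi\|^{2}}{1-\lambda^{2}}\sum|c_n|^{2}$ fed into [\cite{A4}, Theorem 22.3.2]. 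Your side remarks (the Bessel hypothesis on $\{W^{n}\varphi\}_{n=0}^{\infty}$ being automatic, and the need for the $\ell^2$ rather than the $\ell^1$-type condition) are accurate and consistent with the paper's argument.
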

\begin{proof}
	By assumptions, we have
	\begin{equation*}
	\begin{aligned}
	\sum_{n=0}^{\infty}\| T^{n}\varphi - W^{n}\varphi\|^{2}
	\leq 2\Big(\sum_{n=0}^{\infty}\| T^{n}\varphi\|^{2}+ \sum_{n=0}^{\infty}\| W^{n}\varphi\|^{2}\Big)
	\leq 4\| \varphi\|^{2}\sum_{n=0}^{\infty}\lambda^{2n}
	=\dfrac{4\| \varphi\|^{2}}{1-\lambda^{2}}< A.
	\end{aligned}
	\end{equation*}
	We conclude by  [ \cite{A4}, \textcolor{cyan}{Corollary $22.1.5$}]  that   $ \{W^{n}\varphi\}_{n=0}^{\infty} $ is a frame for $ \h .$ 
	\par 
	If $ \{T^{n}\varphi\}_{n=0}^{\infty} $ be a Riesz sequence, then
	\begin{equation*}
	\begin{aligned}
	\Big\| \sum_{n=0}^{\infty}c_{n}(T^{n}\varphi - (T^{n}\varphi+W^{n}\varphi) )\Big\|^2 &=\Big\| \sum_{n=0}^{\infty}c_{n}W^{n}\varphi\Big\|^2\\
	&\leq \sum_{n=0}^{\infty}|c_{n}|^{2}\sum_{n=0}^{\infty}\| W^{n}\varphi\|^{2}\\
	&\leq\dfrac{\| \varphi\|^2}{1- \lambda^{2}}\sum_{n=0}^{\infty}|c_{n}|^{2}.
	\end{aligned}
	\end{equation*}
	Therefore, the result follows from [\cite{A4}, \textcolor{cyan}{Theorem $ 22.3.2 $}].
\end{proof}
\subsection*{Acknowledgment} The authors would like to thank the anonymous reviewers whose comments helped us improve the presentation of the paper.

\end{document}